\newtheorem{theorem}{Theorem}[section]
\newtheorem{lemma}[theorem]{Lemma}
\newtheorem{proposition}[theorem]{Proposition}
\theoremstyle{definition}
\newtheorem{definition}[theorem]{Definition}
\newtheorem{remark}[theorem]{Remark}
\numberwithin{equation}{section}
\newcommand{\la}{\lambda}
\newcommand{\Ber}{\text{Ber}}
\newcommand{\ad}{\text{ad}}
\newcommand{\C}{ \mathbb C }
\newcommand{\Der}{\text{Der}}
\newcommand{\Dist}{\text{Dist}}
\newcommand{\ev}{\text{\rm ev}}
\newcommand{\GL}{{\text{GL}}}
\newcommand{\gl}{{\mathfrak{gl}}}
\newcommand{\Hom}{\text{Hom}}
\newcommand{\Lie}{\text{Lie}}
\newcommand{\salg}{{\mathfrak{salg}}}
\newcommand{\spo}{{{spo}}}
\newcommand{\SpO}{{{SpO}}}
\newcommand{\Z}{ \mathbb Z }
\newcommand{\cj}{{\mathcal J}}
\begin{document}
\title[Modular representations of supergroups]
{Modular representations of the ortho-symplectic supergroups}

\author{Bin Shu}
\address{Department of Mathematics, East China Normal University,
Shanghai 200062,  China} \email{bshu@euler.math.ecnu.edu.cn}

\author[Weiqiang Wang]{Weiqiang Wang}
\address{Department of Mathematics, University of Virginia,
Charlottesville, VA 22904} \email{ww9c@virginia.edu}
\subjclass[2000]{Primary 20G05; Secondary 17B50, 05E10.}

\begin{abstract}
A Chevalley type integral basis for the ortho-symplectic Lie
superalgebra is constructed. The simple modules of the
ortho-symplectic supergroup over an algebraically closed field of
prime characteristic not equal to $2$ are classified, where a key
combinatorial ingredient comes from the Mullineux conjecture on
modular representations of the symmetric group. A Steinberg
tensor product theorem for the ortho-symplectic supergroup is also
obtained.
\end{abstract}

\maketitle
\date{}
%\setcounter{section}{-1}

%%
%%
%%
%%
%%
%%
%\tableofcontents

\section{Introduction}
\subsection{}
%The background}

Lie superalgebras, supergroups, and their representation theory
over the field of complex numbers $\C$ have been studied
extensively in literature since the classification of
finite-dimensional complex simple Lie superalgebras by Kac
\cite{Kac}. More on supergroups and supergeometry over $\C$ can be
found in the book of Manin \cite{Ma}. In recent years, the modular
representations of algebraic supergroups $GL(n|m)$ and $Q(n)$ over
an algebraically closed field $k$ of characteristic $p\neq 2$ have
been initiated by Brundan, Kleshchev and Kujawa \cite{BK1, BK2,
BKu, Ku}.

The modular representation theory of {\em super}groups not only is
of intrinsic interest in its own right (with the rich classical
results in representations of algebraic groups \cite{Jan} in
mind), but also has found remarkable applications to classical
mathematics: the classification of simple modules of the spin
symmetric group over $k$ in \cite{BK1} using $Q(n)$, and a new
conceptual proof in \cite{BKu} using $GL(n|m)$ of the celebrated
Mullineux conjecture \cite{Mu} which describes the correspondence
of simple modules of the symmetric group $S_n$ over $k$ upon
tensoring with the $1$-dimensional sign module. The classification
of the simple $Q(n)$-modules  was also nontrivial \cite{BK2}, in
contrast to the algebraic group setup (cf. Jantzen \cite{Jan}).
\subsection{}
%}

The goal of this paper is to initiate the study of modular
representations of the ortho-symplectic supergroup $SpO(2n|\ell)$
over an algebraically closed field $k$ of characteristic $p>2$. We
construct an integral basis (called Chevalley basis as usual) for
Lie superalgebra $spo(2n|\ell)$ and classify the simple modules of
the algebraic supergroup $SpO(2n|\ell)$ for every $n$ and $\ell$.

Recall that the ortho-symplectic Lie superalgebra $spo(2n|\ell)$,
which contains $sp(2n) \oplus so(\ell)$ as its even Lie
subalgebra, provides the other infinite-series classical
superalgebras besides type $A$ in the list of \cite{Kac}. Let us
exclude the classical Lie algebras by assuming $n \ge 1$ and $\ell
\ge 1$ here. The infinite series $spo(2n|\ell)$ is further divided
into four infinite families by root systems: the series $B(0,n)$
corresponding to $\ell=1$; the series $C(n)$ corresponding to
$\ell=2$; the series $B(m,n)$ for $\ell=2m+1$ and the series
$D(m,n)$ for $\ell=2m$, where $m \ge 1$. Already over $\C$, the
finite-dimensional representation theory of $B(m,n)$ and $D(m,n)$
is very challenging and remains to be better understood (see
Serganova \cite{Ser}). The nontrivial classification of
finite-dimensional simple $spo(2n|\ell)$-modules was obtained in
\cite{Kac}.

\subsection{}
%The layout}

In Section~\ref{sec:equiv}, we establish by a simple and uniform
approach an equivalence of categories of rational $G$-modules and
of locally finite $(Dist(G),T)$-modules under some natural
assumptions on an algebraic supergroup $G$, where $T$ is a maximal
torus of $G$ and $Dist(G)$ denotes the superalgebra of
distributions of $G$. The verification of the assumptions for the
equivalence of categories theorem for $SpO(2n|\ell)$ will follow
from results in Section~\ref{sec:chevalley}. These assumptions can
be easily checked for supergroups $GL(n|m)$ and $Q(n)$. Such an
equivalence of categories were earlier established in \cite{BK2}
for $Q(n)$ and later in \cite{BKu} for $GL(n|m)$ by obtaining in
an elementary yet ad hoc case-by-case method an isomorphism
between a restricted dual of the superalgebra of distributions and
the coordinate superalgebra of the supergroup.

We then introduce in Section~\ref{sec:chevalley} a Chevalley basis
for the Lie superalgebra $spo(2n|\ell)$ for any $n, \ell$. While
our constructions of the Chevalley basis are carried out
explicitly case by case, we observe a uniform phenomenon quite
similar to the characterization of Chevalley basis for simple Lie
algebras (cf. Steinberg \cite[Theorem~1]{St2}). The Chevalley
basis further leads to a Kostant (integral) basis for the
superalgebra of distributions $Dist(SpO(2n|\ell))$. In
establishing these integrality statements, we encounter a new
phenomenon where twice of an (odd) root could sometimes be an
(even) root for $spo(2n|\ell)$.

In Section~\ref{sec:tensor}, we establish an analog of the
Steinberg tensor product theorem for the supergroup
$G=SpO(2n|\ell)$. Once we have the equivalence of categories in
place (see Section~\ref{sec:equiv}), our approach is quite
parallel to \cite{BK2, BKu, Ku}, which in turn followed a strategy
in the algebraic group setup (cf. Cline-Parshall-Scott \cite{CPS}
and \cite{Jan}). The equivalence of categories in
Section~\ref{sec:equiv} allows us to study $G$-modules using the
highest weight module theory of $Dist(G)$. However a new major
difficulty arises (when $\ell \geq 3$): not every simple highest
weight $Dist(G)$-module $L(\la)$ for $\la \in X^+(T)$ is
finite-dimensional, where $X^+(T)$ denotes the set of dominant
integral weights for the underlying even subgroup of $G$.

In Section~\ref{sec:classif}, we determine completely the subset
$X^\dag(T)$ of $X^+(T)$ which parameterize the simple
$SpO(2n|\ell)$-modules. Note that the subset $X^\dag(T)$ differs
from $X^+(T)$ already in characteristic zero (cf. \cite{Kac}).
Remarkably, a key combinatorial ingredient in Mullineux conjecture
singles out the subset $X^\dag(T)$, which depends on the
characteristic $p$ of the ground field in general. We refer to the
Introduction of \cite{BKu} for more references and history on the
solution of Mullineux conjecture by Kleshchev and others (also cf.
\cite{FK, Xu}).

The main tool in the proof of our classification is the method of
odd reflections which has been used over $\C$ by Serganova {\em et
al} (cf. \cite{LSS, PS, Ser}) and then also used by Brundan-Kujawa
\cite{BKu} for $GL(n|m)$ in positive characteristic. To a large
extent, our proof was inspired by the Brundan-Kujawa
classification of simple {\em polynomial} $GL(n|m)$-modules, i.e.
the simple subquotients appearing in various tensor powers of the
natural $GL(n|m)$-module. It is a remarkable and puzzling
coincidence that the simple $SpO(2n|2m+1)$-modules and simple
polynomial $GL(n|m)$-modules are classified by an identical set of
weights. Our classification which is valid over $\C$ can be shown
to be equivalent to the classification in \cite{Kac} over $\C$
using Dynkin labels where a totally different argument was
sketched.

%
%
%
%\subsection{}
\vspace{.3cm}

\noindent {\bf Acknowledgments.} B.S. is partially supported from
CSC, NSF and PCSIRT of China. He thanks the Institute of
Mathematics and Department of Mathematics at University of
Virginia for the hospitality and support during his visit in
2004--05. W.W. is partially supported by NSF and NSA grants, and
he thanks Jon Kujawa for a stimulating and helpful discussion.

{\em Convention.} The terminology of ideals, subalgebras, modules
etc. of a Lie superalgebra instead of superideals,
subsuperalgebras, supermodules, etc. is adopted in this paper.

\section{An equivalence of module categories}
\label{sec:equiv}

\subsection{Algebraic supergroups}

We first briefly recall the generalities on algebraic supergroups,
following \cite[Section~2]{BK2}, (which is in turn a
generalization of the approach by Demazure-Gabriel and Jantzen
\cite{Jan}), also cf. \cite{Ma}.

Let $k$ be a fixed algebraically closed field of characteristic $p
\neq 2$. All objects in this paper will be defined over $k$ unless
otherwise specified. Let $A =A_{{\bar{0}}} + A_{{\bar{1}}}$ be a
commutative superalgebra (i.e. $\Z_2$-graded algebra) over $k$,
i.e. $ab=(-1)^{|a||b|}ba$ for all homogeneous elements $a,b\in A$
of degree $|a|,|b|\in \Z_2$. In the sequel, we assume that all
formulas are defined via the homogeneous elements and extended by
linearity. An element in $A_{\bar{0}}$ (resp. $A_{\bar{1}}$) is
called {\em even} (resp. {\em odd}). From the supercommutativity
it follows that $a^2 =0$ for all $a\in A_{{\bar{1}}}.$ We will
denote by $\salg$ the category of commutative superalgebras over
$k$ and even homomorphisms. A fundamental object in $\salg$ is the
free commutative superalgebra $k [x_1, \ldots, x_n; \xi_1, \ldots,
\xi_m]$ in even generators $x_i$ and odd generators $\xi_j$.

An affine superscheme $X$ will be identified with its associated
functor in the category of superschemes
$$\Hom ({Spec} (-), X): \salg \longrightarrow \mathfrak{sets}. $$
The affine superscheme $\mathbb A^{n|m} := {Spec}\; k [x_1,
\ldots, x_n; \xi_1, \ldots, \xi_m]$ as a functor sends a
commutative superalgebra $A$ to $\mathbb A^{n|m}(A)
=A_{{\bar{0}}}^n + A_{{\bar{1}}}^m.$ For an affine superscheme
$X$, its coordinate superalgebra $k[X]$ is the superalgebra
$\text{Mor} (X, \mathbb A^{1|1})$ of all natural transformations
from the functor $X$ to $\mathbb A^{1|1}$. One has
$X=\Hom_\salg(k[X],-)$.

An affine algebraic supergroup $G$ is a functor from the category
$\salg$ to the category of groups, which associates to a
commutative superalgebra $A$ a group $G(A)$ functorially, and
whose coordinator algebra $k[G]$ is finitely generated. For an
algebraic supergroup $G$, $k[G]$ admits a canonical structure of
Hopf superalgebra, with comultiplication $\Delta: k[G] \rightarrow
k[G]\otimes k[G]$, the antipode $S: k[G] \rightarrow k[G]$, and
the counit $\varepsilon: k[G]\rightarrow k$. Set
$$
\cj := \ker(\varepsilon).
$$
A closed subgroup of $G$ is an affine supergroup scheme whose
coordinate algebra is a quotient of $k[G]$ by a Hopf ideal $I$. In
particular, the underlying purely even group of $G$, denoted by
$G_{\text {ev}}$, corresponds to the Hopf ideal
$k[G]k[G]_{\bar{1}}$. That is, $k[G_\ev] \cong k[G]\slash
k[G]k[G]_{\bar{1}}.$

In the remainder of the paper, an affine algebraic supergroup will
simply be referred to as a supergroup.

\subsection{Superalgebra of distributions}

Let $G$ be a supergroup. The superspace of distributions (at the
identity $e\in G$) is
$$\Dist(G) :=\cup_{n\geq 0}\Dist_n(G)$$
where $\Dist_n(G) :=\{X\in k[G]^* \mid X(\cj^{n+1})=0\} \cong
(k[G]\slash \cj^{n+1})^*$.
%
%Similarly, we have $\Dist^+(G):=\cup_{r\geq 0}\Dist^+_r(G)$ for
%$\Dist^+_r(G)=\{X\in \Dist_r(G)\mid X(1)=0\}\cong (\cj\slash
%\cj^{r+1})^*$.

The space $\Dist(G)$ is  a cocommutative Hopf superalgebra whose
multiplication $*$ is dual to the comultiplication $\Delta$ of
$k[G]$ just as in the ordinary case (cf. \cite{BK2} and
\cite{Jan}). Furthermore, $\Dist(G)$ is a filtered superalgebra
given by:
$$k\subset\Dist_1(G)\subset\cdots \subset
\Dist_r(G)\subset\Dist_{r+1}(G)\subset\cdots.
$$
For $f_1,\cdots,f_n\in \cj$ and $n\in \mathbb N$, we have
\begin{equation*}
\Delta(f_1\cdots f_n)\in \prod_{i=1}^n(1\otimes f_i+f_i\otimes
1)+\sum_{r=1}^n\cj^r\otimes \cj^{n+1-r}.
\end{equation*}
It follows that for $X\in \Dist_s(G)$ and $Y\in \Dist_t(G)$
\begin{equation*}\label{DISTT}
[X,Y] :=X * Y-(-1)^{|X||Y|}Y* X\in \Dist_{s+t-1}(G).
\end{equation*}
Hence, the tangent space at the identity
\begin{equation*}
T_e(G) := \{ X\in \Dist_1(G) \mid X(1)=0\} \cong (\cj\slash
\cj^2)^*
\end{equation*}
carries a Lie superalgebra structure; it is called the Lie
superalgebra of $G$ and will be denoted by $\Lie(G)$.

\subsection{The restricted structure on $Lie(G)$} \label{JEVEN}
\begin{definition}
A Lie superalgebra ${\mathfrak g} ={\mathfrak
g}_{\bar{0}}+{\mathfrak g}_{\bar{1}}$ is called a {\em restricted}
Lie superalgebra (or $p$-Lie superalgebra), if the following
conditions are satisfied:
\begin{enumerate}
\item[(a)] ${\mathfrak g}_{\bar{0}}$ is a restricted Lie algebra with
$p$-mapping $[p]: {\mathfrak g}_{\bar{0}} \rightarrow {\mathfrak
g}_{\bar{0}}$ \cite[Chap.~4]{Jac}.
\item[(b)] ${\mathfrak g}_{\bar{1}}$ is a restricted ${\mathfrak
g}_{\bar{0}}$-module via the adjoint action, i.e.
$\ad(X^{[p]})(X_1) =\ad(X)^p(X_1)$, for $X \in {\mathfrak
g}_{\bar{0}}, X_1\in {\mathfrak g}_{\bar{1}}$.
\end{enumerate}
\end{definition}

Let $G$ be a supergroup. The canonical map $\pi: k[G] \rightarrow
k[G_\ev] = k[G]/k[G]k[G]_{\bar{1}}$ sends $\cj$ to the kernel
$\cj_\ev$ of $\varepsilon_\ev:  k[G_\ev] \rightarrow k$ and
$\pi(\cj^i)\subset \cj_\ev^i$ for $i\ge 1$. This induces an
injective algebra homomorphism $ \pi^*: \Dist(G_\ev) \rightarrow
\Dist(G)$. $\pi$ also induces an isomorphism of vector spaces from
$(\cj\slash \cj^2)_{\bar{0}}$ to $\cj_\ev\slash \cj_\ev^2$, and
both spaces are isomorphic to the space $\cj_{\bar{0}}\slash
(\cj_{\bar{0}}^2 +k[G]_{\bar{1}}^2)$. Thus, we have the following.
\begin{lemma}\label{ISOEVENGP}
The superalgebra homomorphism $\pi^*$ induces an isomorphism of
Lie algebras from $\Lie(G_\ev)$ onto
$\Lie(G)_{\bar{0}}=\Lie(G)\cap \Dist(G)_{\bar{0}}$.
\end{lemma}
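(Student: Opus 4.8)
The plan is to verify in turn the three things the statement packages together: that $\pi^*$ carries $\Lie(G_\ev)$ into the even part $\Lie(G)_{\bar 0}$, that this restriction respects the Lie bracket, and that it is bijective. The first two are essentially formal, and the third is where the vector-space isomorphism recorded just before the lemma does all the work.

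First I would check that $\pi^*$ is compatible with the relevant filtration pieces. For $X \in \Dist_1(G_\ev)$ we have $X(\cj_\ev^2)=0$, and since $\pi(\cj^2) \subseteq \cj_\ev^2$ it follows that $\pi^*(X)(\cj^2) = X(\pi(\cj^2)) \subseteq X(\cj_\ev^2) = 0$, so $\pi^*(X) \in \Dist_1(G)$; moreover $\pi^*(X)(1)=X(\pi(1))=X(1)=0$, so $\pi^*$ sends $\Lie(G_\ev) = T_e(G_\ev)$ into $T_e(G) = \Lie(G)$. Because $k[G_\ev]$ is an ordinary (purely even) algebra, $\Dist(G_\ev) \subseteq k[G_\ev]^*$ is concentrated in even degree, so in fact $\pi^*(\Lie(G_\ev)) \subseteq \Lie(G) \cap \Dist(G)_{\bar 0} = \Lie(G)_{\bar 0}$. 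For the bracket: $\pi^*$ is a homomorphism of associative superalgebras for the convolution products, hence it intertwines the supercommutators $[\,\cdot\,,\,\cdot\,]$; restricted to the purely even subalgebra $\Dist(G_\ev)$ the supercommutator is the ordinary commutator, so $\pi^*\colon \Lie(G_\ev) \to \Lie(G)_{\bar 0}$ is a homomorphism of Lie algebras.

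For bijectivity, I would pass to cotangent spaces. Under the canonical identifications $\Lie(G_\ev) \cong (\cj_\ev/\cj_\ev^2)^*$ and $\Lie(G)_{\bar 0} \cong \big((\cj/\cj^2)_{\bar 0}\big)^*$ (the latter being the functionals on $\cj/\cj^2$ that vanish on the odd part), the restriction of $\pi^*$ to $\Lie(G_\ev)$ is precisely the transpose of the linear map $(\cj/\cj^2)_{\bar 0} \to \cj_\ev/\cj_\ev^2$ induced by $\pi$. That induced map is an isomorphism of vector spaces — as recalled above, both sides are isomorphic to $\cj_{\bar 0}/(\cj_{\bar 0}^2 + k[G]_{\bar 1}^2)$ — so its transpose is an isomorphism, giving both injectivity and surjectivity onto $\Lie(G)_{\bar 0}$. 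I do not expect a real obstacle; the only point needing a little care is the bookkeeping that identifies $\Lie(G)_{\bar 0}$ with the dual of $(\cj/\cj^2)_{\bar 0}$ and confirms that $\pi^*$ restricts to the transpose of the induced map on cotangent spaces, after which everything follows formally.
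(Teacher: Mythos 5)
Your proof is correct and follows essentially the same route the paper takes: the discussion immediately before the lemma supplies the injective algebra homomorphism $\pi^*$ and the vector-space isomorphism $(\cj/\cj^2)_{\bar 0}\xrightarrow{\sim}\cj_\ev/\cj_\ev^2$, and the lemma is asserted to follow; you have simply unpacked the dualization and the compatibility with brackets that the paper leaves implicit.
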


\begin{proposition} \label{RESPROP}
Let $G$ be a supergroup. Then, $\Lie(G)$ is a restricted Lie
superalgebra with the $p$-mapping: $X\mapsto X^{[p]}$ for $X\in
\Lie(G)_{\bar{0}}$, where $X^{[p]} :=\overbrace{X*\cdots *X}^p$ is
defined in $\Dist(G)$. Moreover, the restricted structure on
$\Lie(G_\ev)$ as a subalgebra of $\Lie(G)$ coincides with the one
induced as Lie algebra of the algebraic group $G_\ev$.
\end{proposition}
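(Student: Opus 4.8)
The plan is to verify the two defining conditions (a) and (b) of a restricted Lie superalgebra for $\mathfrak g = \Lie(G)$, using the associative $p$-th power in $\Dist(G)$ as the candidate $p$-mapping on $\mathfrak g_{\bar 0} = \Lie(G)_{\bar 0}$, and to simultaneously identify this structure on the even part with the known restricted structure on $\Lie(G_\ev)$. The natural strategy is to \emph{reduce everything to the purely even situation}, where the statement is classical (cf. Jantzen \cite{Jan}), via the isomorphism of Lemma~\ref{ISOEVENGP} and the algebra embedding $\pi^*: \Dist(G_\ev) \hookrightarrow \Dist(G)$.

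First I would check that the $p$-th power $X^{[p]} = X * \cdots * X$ of an element $X \in \Lie(G)_{\bar 0}$ again lies in $\Lie(G)_{\bar 0}$, i.e. that it is a primitive element of degree $1$ in $\Dist(G)$. For this one uses that $X$ is primitive (so $\Delta(X) = X\otimes 1 + 1 \otimes X$), that $X$ is even (so there are no sign complications), and that in the cocommutative Hopf superalgebra $\Dist(G)$ the $p$-th power of a primitive even element is primitive — the binomial expansion of $\Delta(X^{[p]}) = (\Delta X)^{*p} = (X\otimes 1 + 1\otimes X)^{*p}$ collapses modulo $p$ by the usual ``freshman's dream'' applied to the commuting elements $X\otimes 1$ and $1 \otimes X$, leaving $X^{[p]}\otimes 1 + 1 \otimes X^{[p]}$. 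That $X^{[p]}$ vanishes on $\cj$-powers appropriately, hence lands in $\Dist_1(G) \cap \Dist(G)_{\bar 0}$, follows from $\Lie(G)_{\bar 0} = \Lie(G_\ev)$ under $\pi^*$ together with the classical fact for the algebraic group $G_\ev$; this is precisely where the last sentence of the Proposition gets proved, essentially for free, since $\pi^*$ is an algebra homomorphism and hence intertwines $p$-th powers. So (a) and the final ``Moreover'' clause come together.

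Next, for condition (b) I must show that $\mathfrak g_{\bar 1} = \Lie(G)_{\bar 1}$ is a restricted module over $\mathfrak g_{\bar 0}$, i.e. $\ad(X^{[p]})(Y) = \ad(X)^p(Y)$ for $X \in \Lie(G)_{\bar 0}$, $Y \in \Lie(G)_{\bar 1}$. Since everything here takes place inside the associative superalgebra $\Dist(G)$, with $X$ even, one has $\ad(X) = L_X - R_X$ where $L_X, R_X$ denote left and right multiplication by $X$ in $\Dist(G)$; these commute, so $\ad(X)^p = (L_X - R_X)^p = L_X^p - R_X^p = L_{X^{[p]}} - R_{X^{[p]}} = \ad(X^{[p]})$ as operators on $\Dist(G)$, again by the characteristic-$p$ binomial collapse (no signs intervene because $|X|=\bar 0$). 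Restricting this identity of operators to $Y \in \Lie(G)_{\bar 1} \subset \Dist(G)$ gives exactly (b). One should note that $[X,Y] = \ad(X)(Y) \in \Dist_1(G)$ stays in $\Lie(G)$ — it is primitive since the supercommutator of primitives is primitive — so the adjoint action is well-defined on $\mathfrak g$.

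The only genuinely delicate point, and the one I would expect to require care, is the identification with the algebraic-group restricted structure: one must confirm that the associative $p$-power in $\Dist(G)$, when restricted along $\pi^*$ to $\Dist(G_\ev) \subset \Dist(G)$, is exactly the associative $p$-power in $\Dist(G_\ev)$, and hence the $p$-mapping on $\Lie(G_\ev)$ that Jantzen attaches to the algebraic group $G_\ev$. Since $\pi^*$ is an injective homomorphism of associative superalgebras it commutes with taking $p$-th powers, so this is immediate once one recalls that the classical $p$-mapping on $\Lie(G_\ev)$ is defined via precisely this associative power in $\Dist(G_\ev)$ \cite[I.7, II.3]{Jan}; the compatibility with $\ad$ on $\Lie(G_\ev)$ is then the classical statement. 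Putting (a), (b) and this identification together yields the Proposition.
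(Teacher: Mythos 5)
Your proposal is correct and takes essentially the same route as the paper: condition (a) and the ``moreover'' clause are reduced to the classical algebraic-group case for $G_\ev$ via Lemma~\ref{ISOEVENGP} and the fact that $\pi^*$ is an algebra homomorphism (hence intertwines associative $p$-th powers), while condition (b) is a characteristic-$p$ binomial collapse in the associative algebra $\Dist(G)$ — your $\ad(X)^p = (L_X - R_X)^p = L_{X^{[p]}} - R_{X^{[p]}}$ is just a repackaging of the paper's explicit expansion $(\ad X_0)^r(X_1)=\sum_{i=0}^r(-1)^{r-i}\binom{r}{i} X_0^i*X_1*X_0^{r-i}$.
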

\begin{proof}
Since $G_\ev$ is an algebraic group, $\Lie(G_\ev)$ is a restricted
Lie algebra with the $p$-mapping given by the $p$-th power in
$\Dist(G_\ev)$ (cf. \cite{Bor, Jan}). The compatibility of the
restricted structures on $\Lie(G)$ and on $\Lie(G_\ev)$ now
follows from Lemma~\ref{ISOEVENGP} since $ \pi^*: \Dist(G_\ev)
\rightarrow \Dist(G)$ is an algebra homomorphism.

For $X_0\in \Lie(G)_{\bar{0}}, X_1\in \Lie(G)_{\bar{1}}$, and $r
\ge 1$, we have
\begin{equation}\label{JACOB}
(\ad X_0)^r(X_1)=\sum_{i=0}^r(-1)^{r-i}{r\choose i}
 X_0^i*X_1*X_0^{r-i}.
\end{equation}
This implies that $\ad(X_0)^p(X_1)=X_0^{[p]}*X_1-X_1 * X_0^{[p]}
=\ad(X_0^{[p]})(X_1)$. Thus the $\Lie(G)_{\bar{0}}$-module
$\Lie(G)_{\bar{1}}$ via adjoint action is a restricted module.
\end{proof}

For $i=0,1$, we let
$$\Der_k(k[G], k)_{\bar{i}} :=\{X\in \Hom_k(k[G],k)_{\bar{i}} \mid
X(fg) =X(f)\varepsilon(g) + (-1)^{i|f|} \varepsilon(f)X(g)\}$$
and let $\Der_k(k[G], k) =\Der_k(k[G], k)_{\bar{0}} \oplus
\Der_k(k[G], k)_{\bar{1}}$. The following can be established as in
the case of algebraic groups (cf. \cite{Bor}).
\begin{lemma}\label{POINTD}
As restricted Lie superalgebras, $Lie(G)\cong \Der_k(k[G],k)$.
\end{lemma}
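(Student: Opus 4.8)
The plan is to identify $\Lie(G)$, which is the tangent space $T_e(G) \cong (\cj/\cj^2)^*$ with its bracket inherited from $\Dist(G)$, with the superspace of point derivations $\Der_k(k[G],k)$, and then to check that this identification intertwines all the relevant structure: the $\Z_2$-grading, the Lie superbracket, and the $p$-mapping on the even part. First I would construct the linear map in one direction: given $X \in T_e(G)$, i.e. a linear functional on $k[G]$ vanishing on $1$ and on $\cj^2$, I would verify that $X$ satisfies the (signed) Leibniz rule $X(fg) = X(f)\varepsilon(g) + (-1)^{|X||f|}\varepsilon(f)X(g)$. This is a direct computation: writing $f = \varepsilon(f)1 + f'$ and $g = \varepsilon(g)1 + g'$ with $f', g' \in \cj$, one expands $fg$ and uses that $X$ kills $\cj^2 \ni f'g'$ and kills constants. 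Conversely, given a point derivation $X \in \Der_k(k[G],k)_{\bar i}$, the Leibniz rule forces $X(1) = 2X(1)\varepsilon(1)$ hence $X(1)=0$, and then for $f',g' \in \cj$ one gets $X(f'g') = X(f')\varepsilon(g') + \pm\varepsilon(f')X(g') = 0$, so $X$ vanishes on $\cj^2$ and lies in $T_e(G)$. These two assignments are visibly mutually inverse and respect the $\Z_2$-grading, giving a linear isomorphism $\Lie(G) \cong \Der_k(k[G],k)$.

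Next I would check that this isomorphism is a homomorphism of Lie superalgebras. The bracket on $\Lie(G)$ is $[X,Y] = X*Y - (-1)^{|X||Y|}Y*X$, where $X*Y$ is the convolution product dual to $\Delta$, i.e. $(X*Y)(f) = (X \otimes Y)(\Delta f)$. Using the Sweedler-type notation $\Delta f = \sum f_{(1)} \otimes f_{(2)}$ together with the counit axiom $\sum \varepsilon(f_{(1)}) f_{(2)} = f = \sum f_{(1)}\varepsilon(f_{(2)})$, one computes directly that $[X,Y]$, viewed as an element of $k[G]^*$, satisfies the point-derivation Leibniz rule with sign $(-1)^{(|X|+|Y|)|f|}$; equivalently, the commutator of derivations of a superalgebra is again a derivation. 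This is the standard verification that $\Der_k(A,k)$ is closed under the supercommutator for a commutative superalgebra $A$, and it transports verbatim. Because the identification of $T_e(G)$ with point derivations is literally the identity map on functionals (only the description of the domain changes, from ``kills $1$ and $\cj^2$'' to ``satisfies Leibniz''), the brackets agree on the nose, so we get an isomorphism of Lie superalgebras.

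Finally I would address the restricted structure. On the even part $\Lie(G)_{\bar 0} = \Lie(G) \cap \Dist(G)_{\bar 0}$, the $p$-mapping is $X \mapsto X^{[p]} = X * \cdots * X$ ($p$ factors) computed in $\Dist(G)$, as given by Proposition~\ref{RESPROP}; and I would note that the even point derivations are exactly $\Der_k(k[G_\ev],k)$ pulled back along $\pi^*$, since an even functional killing $\cj^2$ factors through $k[G_\ev]$ by Lemma~\ref{ISOEVENGP}. So the statement that the isomorphism respects the $p$-operation reduces, via $\pi^*$ being an algebra homomorphism, to the corresponding well-known fact for the algebraic group $G_\ev$: the Lie algebra of $G_\ev$, identified with point derivations of $k[G_\ev]$, carries its restricted structure via the $p$-th convolution power (cf. \cite{Bor, Jan}). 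I expect the main obstacle to be purely bookkeeping rather than conceptual: one must be careful with the signs in the super-Leibniz rule and with the convention that $\Der_k(k[G],k)_{\bar 1}$ uses the twisted derivation rule $X(fg) = X(f)\varepsilon(g) + (-1)^{|f|}\varepsilon(f)X(g)$, and one must make sure the odd part (where there is no $p$-operation to check, since $a^2 = 0$ for odd $a$ and the $p$-mapping is only defined on the even part) is handled consistently. None of these steps involves a genuine difficulty, which is why I would present it as ``established as in the case of algebraic groups.''
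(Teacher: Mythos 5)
The paper gives no proof of Lemma~\ref{POINTD} beyond citing that it "can be established as in the case of algebraic groups (cf.~\cite{Bor})", and your write-up is a correct expansion of exactly that argument: you identify $T_e(G)$ with point derivations via the signed Leibniz rule, observe that the sign is harmless because $\varepsilon$ kills odd elements, check closure under the convolution supercommutator (which is what equips $\Der_k(k[G],k)$ with its bracket), and reduce the compatibility of $p$-mappings to the classical case through $\pi^*$ and Lemma~\ref{ISOEVENGP}. Your plan matches the paper's intent; the only imprecision is your phrasing about "the commutator of derivations" — elements of $\Der_k(k[G],k)$ do not compose, so what is being verified is that the \emph{convolution} commutator of point derivations again satisfies the Leibniz rule, which you do state correctly a sentence earlier.
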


\subsection{$G$-modules and $Dist(G)$-modules}

For any vector superspace $M$, we have the general linear
supergroup $GL(M)$ which sends each commutative superalgebra $A$
to $GL(M,A)$, the group of all even $A$-linear automorphisms of
$M\otimes A$. A representation of a supergroup $G$ or a (rational)
$G$-module, means a natural transformation $\rho:G\rightarrow
GL(M)$ for some vector superspace $M$. As for algebraic groups, a
representation $M$ of $G$ is equivalent to a right $k[G]$-comodule
structure on $M$ given by an even map $\eta_M: M \rightarrow M
\otimes k[G]$ (cf. \cite{Jan, BK2}). We denote by
$G$-$\mathfrak{mod}$ the category of rational $G$-modules with
(not necessarily homogeneous) $G$-homomorphisms. Note that a
$G$-module is always {\em locally finite}, i.e., it is a sum of
finite-dimensional $G$-modules.

Given a closed subgroup $H$ of $G$, a $Dist(G)$-module $M$ is
called a $(Dist(G),H)$-module if $M$ has also a structure of
$H$-module such that the $Dist(H)$-module structures on $M$
induced from the actions of $Dist(G)$ and of $H$ coincide. We
denote by $(Dist(G),H)$-$\mathfrak{mod}$ the category of locally
finite $(Dist(G),H)$-modules.

There is a natural functor
$$
\Psi: G\mathfrak{-mod} \longrightarrow (Dist(G),H)\mathfrak{-mod}
$$
as follows: one endows a $G$-module
$M$ with an action of $\mu \in Dist(G)$ by $(id_M \otimes \mu)
\circ \eta_M$, and $M$ is acted by $H$ as a subgroup of $G$.

\subsection{An equivalence of categories}

Let $G$ be a supergroup. The Frobenius morphism $F:G\rightarrow G$
is the natural transformation which assigns to each $A \in \salg$
the morphism $F(A): G(A)\rightarrow G(A)$ induced by $a \mapsto
a^{p^r}$ for $a \in A$.
%for which $F(A)$ raises each matrix entry to the $p$-th power.
The image of $F$ lies in $G_\ev$ and we often denote $F:
G\rightarrow G_\ev$. For $r\geq 1$, we define $F^r:G\rightarrow
G_\ev$ by the $r$-th iteration of $F$, whose kernel $G_r$ is
called the $r$-th Frobenius kernel of $G$. Since $G_1$ is a normal
subgroup of $G$, $G_\ev G_1 :=\{ g_0g_1 |g_0 \in G_\ev, g_1 \in
G_1\}$ is a subgroup of $G$.

\begin{lemma} \label{lem:inters}
For every $r \ge 1$, $G_r \cap G_\ev =(G_\ev)_r$.
\end{lemma}
\begin{proof}
Note that $G_r \cap G_\ev$ is the kernel of $F^r|_{G_\ev}$, the
restriction to ${G_\ev}$ of the Frobenius morphism $F^r$ on $G$.
Now the lemma follows from that $F^r|_{G_\ev}$ coincides with the
$r$-th Frobenius morphism of $G_\ev$.
\end{proof}

A key argument for the following lemma was supplied by Jon Kujawa.
\begin{lemma} \label{lem:keygen}
Assume $G$ is a supergroup with its even subgroup $G_\ev$ defined
over $\mathbb F_p$. Then, $G =G_\ev G_1$. Furthermore, $G_r
=(G_\ev)_r G_1$ for every $r \ge 1$.
\end{lemma}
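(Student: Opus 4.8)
The plan is to prove the two assertions in sequence, deducing the second from the first together with Lemma~\ref{lem:inters}. To establish $G = G_\ev G_1$, I would work at the level of the coordinate superalgebra and use the fact that $k[G]$ is, as an algebra, of the form $k[G_\ev] \otimes \Lambda$, where $\Lambda = \Lambda(k[G]_{\bar 1})$ is the exterior algebra on the odd part (this is the standard structure theorem for Hopf superalgebras, following Demazure--Gabriel and already implicit in the discussion of $k[G_\ev]$ above). Since $G_\ev$ is defined over $\mathbb F_p$, the Frobenius morphism $F^r: G_\ev \to G_\ev$ is faithfully flat with kernel $(G_\ev)_r$, so at the level of coordinate rings $k[(G_\ev)_r] = k[G_\ev]/(k[G_\ev]^+)^{[p^r]}$ and $k[G_\ev]$ is a faithfully flat (indeed free) module over the image subalgebra $k[G_\ev]^{(p^r)}$. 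The point is that the multiplication map $G_\ev \times G_1 \to G$ corresponds dually to a comultiplication-type map $k[G] \to k[G_\ev] \otimes k[G_1]$, and I want to see that this exhibits $k[G]$ faithfully flat over $k[G/G_1] \cong k[G_\ev]^{(p)}$ (or its $r$-th analogue).

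Concretely, the first step is to identify $G/G_1$: the quotient $G \to G_\ev$ given by $F$ has kernel $G_1$, so $G/G_1 \cong \operatorname{im}(F) = G_\ev^{(p)}$, i.e. $k[G/G_1] = k[G]^{p} \cap (\text{even part}) = k[G_\ev]^{(p)} \subset k[G_\ev] \subset k[G]$, using that all odd elements square to zero and hence are killed by Frobenius. The second step is to observe that $G_\ev G_1$ is a closed subgroup of $G$ (as noted in the excerpt $G_1$ is normal, so the product is a subgroup) containing both $G_\ev$ and $G_1$; its image under $F: G \to G_\ev$ is $F(G_\ev) = G_\ev^{(p)}$ (since $F$ kills $G_1$), which is all of $\operatorname{im}(F)$. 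So $G_\ev G_1$ surjects onto $G/G_1$ and contains $G_1$, whence $G_\ev G_1 = G$. This is the conceptual heart: a subgroup containing a normal subgroup $N$ and surjecting onto $G/N$ equals $G$; I would phrase it functorially, checking on $A$-points that any $g \in G(A)$ can be written $g = (g \cdot n^{-1}) \cdot n$ with $n \in G_1(A)$ chosen so that $F^r(g n^{-1})$... — but cleaner is just the coordinate-ring surjectivity argument. The main obstacle is being careful that "$G_\ev G_1$ is closed" and the quotient $G/G_1$ are handled rigorously in the superscheme setting, where quotients require a little care; I would cite the relevant generalities from \cite{BK2} or \cite{Jan} rather than reprove them.

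For the refined statement $G_r = (G_\ev)_r G_1$ for all $r \ge 1$, I would argue as follows. First, $(G_\ev)_r G_1 \subseteq G_r$: indeed $G_1 \subseteq G_r$ since the Frobenius kernels are nested ($G_1 = \ker F \subseteq \ker F^r = G_r$), and $(G_\ev)_r = G_r \cap G_\ev \subseteq G_r$ by Lemma~\ref{lem:inters}; as $G_1$ is normal in $G$ the product $(G_\ev)_r G_1$ is a subgroup contained in $G_r$. For the reverse inclusion I would use a dimension/order count together with $G = G_\ev G_1$. Applying $F^r: G \to G_\ev$ to $G = G_\ev G_1$ and using $F^r(G_1) = e$ (as $G_1 \subseteq \ker F^r$... wait, only for $r \ge 1$, which holds), one gets $F^r(G) = F^r(G_\ev) = (G_\ev)^{(p^r)}$, so the restriction $F^r: G \to (G_\ev)^{(p^r)}$ is surjective with kernel $G_r$; meanwhile $F^r$ restricted to $(G_\ev)_r G_1$ already surjects onto $(G_\ev)^{(p^r)}$ via the $G_\ev$-factor (since $(G_\ev)_r$ is the kernel of $F^r|_{G_\ev}$ and $F^r(G_\ev) = (G_\ev)^{(p^r)}$ is covered by... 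I mean $(G_\ev)_r G_1$ contains $G_\ev$? No — it contains $(G_\ev)_r$, not all of $G_\ev$).

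Let me correct the last point: the right argument is the coordinate-ring one. We have the subgroup inclusion $(G_\ev)_r G_1 \hookrightarrow G_r$, and I claim $k[G_r] \twoheadrightarrow k[(G_\ev)_r G_1]$ is also injective, i.e. an isomorphism. Using $k[G] = k[G_\ev] \otimes \Lambda$ and the fact that $G_r = \ker F^r$ gives $k[G_r] = k[G] / (k[G]^+)^{[p^r]} k[G] = (k[G_\ev]/(k[G_\ev]^+)^{[p^r]}) \otimes \Lambda = k[(G_\ev)_r] \otimes \Lambda$ — here crucially $p^r \ge 2$ so all odd generators survive untouched in $\Lambda$ (they are already in $\cj$, and their $p^r$-th powers vanish, imposing no new relation beyond $\xi^2 = 0$). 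On the other hand $k[(G_\ev)_r G_1]$ — since $G_1 \cong$ the first Frobenius kernel carries the full exterior algebra $\Lambda$ in its coordinate ring, and $(G_\ev)_r$ contributes $k[(G_\ev)_r]$ — is computed the same way to be $k[(G_\ev)_r] \otimes \Lambda$. Matching dimensions (both finite-dimensional of the same dimension) forces the surjection to be an isomorphism, hence $G_r = (G_\ev)_r G_1$. The main obstacle I anticipate is precisely this bookkeeping with the coordinate superalgebras of Frobenius kernels — verifying that $k[G_r] = k[(G_\ev)_r] \otimes \Lambda(k[G]_{\bar 1})$ cleanly, i.e. that passing to the $r$-th Frobenius kernel affects only the even part — and I would isolate that as a short lemma, proved by the explicit description of $\cj^{[p^r]}$-type ideals using the tensor decomposition of $k[G]$.
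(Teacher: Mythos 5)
Your strategy is correct but takes a genuinely different route from the paper's, which is much more elementary. The paper argues directly with points: since $G_\ev$ is defined over $\mathbb F_p$, the Frobenius $F|_{G_\ev}$ is surjective, so for $g \in G$ one picks $g_0 \in G_\ev$ with $F(g_0) = F(g)$, and then $g_1 := g_0^{-1}g \in G_1$ gives $g = g_0 g_1$. For the second claim the paper simply applies $F^r$ to this decomposition: if $g \in G_r$ then $1 = F^r(g) = F^r(g_0)F^r(g_1) = F^r(g_0)$ (since $G_1 \subseteq G_r$), so $g_0 \in G_r \cap G_\ev$, which is $(G_\ev)_r$ by Lemma~\ref{lem:inters}. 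That is the whole proof --- about four lines, with no coordinate rings, no quotient group schemes, and no appeal to the structure theorem $k[G] \cong k[G_\ev] \otimes \Lambda$.

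By contrast, you work dually: you invoke the tensor decomposition of $k[G]$, identify $G/G_1$, show $G_\ev G_1$ surjects onto it, and then for the refined claim you compute $k[G_r]$ and $k[(G_\ev)_r G_1]$ and match dimensions. This is sound in outline --- your first part is really the paper's surjectivity argument rephrased through quotients, and the dimension count in your second part does close up (one has $\dim k[(G_\ev)_r G_1] = \dim k[(G_\ev)_r]\cdot\dim k[G_1]/\dim k[(G_\ev)_1] = \dim k[(G_\ev)_r]\cdot\dim\Lambda = \dim k[G_r]$, using $(G_\ev)_r\cap G_1 = (G_\ev)_1$). What your approach buys is a more scheme-theoretically explicit treatment that does not hide surjectivity subtleties inside an elementwise computation. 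What it costs is substantial extra machinery: you must cite the structure theorem for $k[G]$, justify formation of the product subgroup and its coordinate ring for finite supergroup schemes, and verify that taking the $r$-th Frobenius kernel touches only the even tensor factor. You acknowledge these gaps ("I would isolate that as a short lemma"), and the proposal also contains visible false starts that would need to be excised. In short: correct in strategy, would work if completed, but considerably heavier than the paper's direct argument, which entirely sidesteps coordinate-ring bookkeeping by leaning on surjectivity of the Frobenius and the already-established Lemma~\ref{lem:inters}.
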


\begin{proof}
The restriction $F|_{G_\ev}: G_\ev \rightarrow G_\ev$ coincides
with the Frobenius morphism of $G_\ev$ which is surjective as
$G_\ev$ is defined over $\mathbb F_p$ (cf. \cite{Jan}). Recall
that for any $g \in G$, we have $F(g) \in G_\ev$. By the
surjectivity of the Frobenius morphism $F|_{G_\ev}$ on $G_\ev$,
there exists an element $g_0 \in G_\ev$ so that $F(g_0) =F(g)$.
Thus, $F( g_0^{-1} g) =1$, i.e. $g_1: = g_0^{-1} g\in G_1$, and $g
=g_0g_1 \in G_\ev
G_1$. %This proves $G =G_\ev G_1$.

Now if $g \in G_r$, then $ 1= F^r(g) = F^r(g_0g_1) = F^r(g_0)$,
that is $g_0\in G_r$. It follows by Lemma~\ref{lem:inters} that
$g_0 \in (G_\ev)_r$.
\end{proof}

Since $G_1$ is a finite (super)group scheme and $k[G_1]$ is a
finite-dimensional Hopf superalgebra, we have the next lemma
following \cite[Chap. I.8]{Jan}. Recall for a $G_1$-module $M$, we
have the comodule structure map $\eta_M: M \rightarrow M \otimes
k[G_1]$. Denote by $G_1$-$\mathfrak{mod}$ the category of
$G_1$-modules and by $Dist(G_1)$-$\mathfrak{mod}$ the category of
$Dist(G_1)$-modules. Note that $Dist(G_1) = k[G_1]^*$.

\begin{lemma} \label{lem:equivG1}
The functor $\Psi_1:$ $G_1$-$\mathfrak{mod} \rightarrow
Dist(G_1)$-$\mathfrak{mod}$, which endows a $G_1$-module $M$ with
an action of $\mu \in Dist(G_1)$ by $(id_M \otimes \mu) \circ
\eta_M$, is an equivalence of categories.
\end{lemma}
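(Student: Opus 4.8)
The plan is to exploit the standard duality between modules and comodules over a finite-dimensional Hopf superalgebra, mirroring the argument in \cite[Chap.~I.8]{Jan} for finite group schemes, and adapting it to the super setting. Since $G_1$ is a finite supergroup scheme, $k[G_1]$ is a finite-dimensional Hopf superalgebra, and hence its dual $Dist(G_1) = k[G_1]^*$ is again a (finite-dimensional) Hopf superalgebra, with multiplication dual to the comultiplication of $k[G_1]$. For a finite-dimensional superspace $V$, the natural pairing $k[G_1]^* \otimes k[G_1] \to k$ induces a canonical (even) isomorphism $\Hom_k(V, V\otimes k[G_1]) \cong \Hom_k(Dist(G_1)\otimes V, V)$; under this identification one checks that the coassociativity and counit axioms for a right $k[G_1]$-comodule structure on $V$ correspond precisely to the associativity and unit axioms for a $Dist(G_1)$-module structure on $V$. (Here one must keep track of the Koszul sign rule when transposing tensor factors, but no new phenomenon arises.)

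First I would make this correspondence functorial: given a $G_1$-module homomorphism $f\colon M\to N$, i.e.\ an even or odd $k$-linear map intertwining the comodule maps $\eta_M, \eta_N$, one verifies directly that the same $f$ intertwines the induced $Dist(G_1)$-actions $(id\otimes\mu)\circ\eta$, so that $\Psi_1$ is a well-defined functor on morphisms, not merely on objects. Conversely, a $Dist(G_1)$-module homomorphism dualizes back to a comodule homomorphism. Since $Dist(G_1)$-$\mathfrak{mod}$ here means \emph{all} $Dist(G_1)$-modules but $k[G_1]^*$ is finite-dimensional so every module over it that is locally finite is fine, and since $G_1$-modules are automatically locally finite (being sums of finite-dimensional ones), one reduces the whole statement to finite-dimensional superspaces, where the duality $V\mapsto V$ with the two structures above is manifestly an equivalence — indeed it is the identity on underlying superspaces, with an inverse functor given by reversing the dualization.

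The key steps, in order, are: (i) recall $Dist(G_1)=k[G_1]^*$ is a finite-dimensional Hopf superalgebra and fix the pairing conventions; (ii) for finite-dimensional $V$, establish the bijection between right $k[G_1]$-comodule structures and $Dist(G_1)$-module structures, checking the axioms translate correctly with signs; (iii) check this is compatible with morphisms, so $\Psi_1$ is a functor, and that the reverse construction gives a functor $\Phi_1$ in the other direction; (iv) observe $\Psi_1$ and $\Phi_1$ are mutually inverse on the nose (identity on underlying spaces), hence an equivalence; (v) finally remark that the local finiteness hypotheses are automatically satisfied on both sides because $k[G_1]$ and $Dist(G_1)$ are finite-dimensional and $G_1$-modules are locally finite by construction, so no restriction is lost.

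The only genuinely delicate point — the ``main obstacle,'' though it is more bookkeeping than a real obstacle — is getting the sign conventions consistent when one transposes the middle two factors in expressions like $(\mu\otimes id)\circ(id\otimes\Delta_{k[G_1]})$ versus $\eta\circ\eta$, and when one evaluates an odd distribution against an odd element of $k[G_1]$; one has to be careful that the associativity diagram for the $Dist(G_1)$-action is the \emph{super}-transpose of the coassociativity diagram for $\eta_M$. Once the conventions are pinned down exactly as in the cocommutative-Hopf-superalgebra formalism already used for $Dist(G)$ earlier in this section, everything else is the routine finite-dimensional linear-algebra duality, exactly as in the purely even case treated in \cite{Jan}.
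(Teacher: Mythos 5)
Your proposal is correct and follows exactly the route the paper takes: the paper does not give a detailed proof, but simply states that the lemma follows from \cite[Chap.~I.8]{Jan} because $G_1$ is a finite supergroup scheme and $k[G_1]$ is a finite-dimensional Hopf superalgebra, i.e.\ it invokes the standard duality between $k[G_1]$-comodules and $k[G_1]^*$-modules that you spell out. One small simplification you could make: since $k[G_1]$ is finite-dimensional, the isomorphism $\Hom_k(V, V\otimes k[G_1]) \cong \Hom_k(Dist(G_1)\otimes V, V)$ holds for \emph{arbitrary} superspaces $V$ (via $V\otimes k[G_1]\cong \Hom_k(Dist(G_1),V)$ and tensor-hom adjunction), so the reduction to finite-dimensional $V$ is unnecessary; relatedly, the reason no restriction is lost on the $Dist(G_1)$ side is simply that finite-dimensionality of $Dist(G_1)$ forces every $Dist(G_1)$-module to be locally finite.
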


For the remainder of this subsection, we make the following
assumption on $G$.

\vspace{.2cm} \noindent {\bf Assumptions.} {\em $G$ is an
algebraic supergroup whose even subgroup $G_\ev$ is a connected
reductive group defined over $\mathbb F_p$ with a maximal torus
$T$. Furthermore, there is an integral basis for $Lie(G)$ and a
corresponding basis for $Dist(G)$ which extend the Chevalley basis
for $Lie(G_\ev)$ and the Kostant basis for $Dist(G_\ev)$
respectively.}

It is known (cf. \cite{Jan}) that under the above assumption on
the algebraic group $G_\ev$ the natural functor $\Psi_\ev:$
$G_\ev$-$\mathfrak{mod}$ $\rightarrow
(Dist(G_\ev),T)$-$\mathfrak{mod}$ (defined just as the functor
$\Psi$) is an equivalence of categories. Clearly $\Psi:$
$G$-$\mathfrak{mod}$ $\rightarrow (Dist(G),T)$-$\mathfrak{mod}$ is
compatible with $\Psi_\ev$ and also with $\Psi_1$ via forgetful
functors.

\begin{theorem} \label{th:equiv}
Retain the above assumptions on $G$. Then $\Psi$ is an equivalence
of categories between $G$-$\mathfrak{mod}$ and
$(Dist(G),T)$-$\mathfrak{mod}$.
\end{theorem}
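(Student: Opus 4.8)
The plan is to prove that $\Psi$ is fully faithful and essentially surjective by leveraging the three equivalences we already have in place: $\Psi_\ev$ for $G_\ev$, $\Psi_1$ for $G_1$, and the decomposition $G = G_\ev G_1$ from Lemma~\ref{lem:keygen}. The key conceptual point is that a $(Dist(G),T)$-module is, by the Assumptions and the fact that $Dist(G)$ is built from the Kostant-type basis, nothing more than a module over $Dist(G_\ev)$ together with a compatible action of the ``odd part'' of $Dist(G)$; since $G = G_\ev G_1$, specifying a rational $G$-module amounts to specifying a rational $G_\ev$-module together with a compatible $G_1$-module structure, with compatibility over the intersection $G_\ev \cap G_1 = (G_\ev)_1$ (by Lemma~\ref{lem:inters}). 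So the strategy is to match these two ``gluing'' descriptions.

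First I would verify that $\Psi$ is faithful: if $\phi\colon M \to N$ is a $G$-module homomorphism that becomes zero as a map of $(Dist(G),T)$-modules, then in particular it is zero as a map of $(Dist(G_\ev),T)$-modules, hence zero as a map of $G_\ev$-modules by the known equivalence $\Psi_\ev$; since the underlying vector space map is already zero, $\phi = 0$. For fullness, given a $(Dist(G),T)$-module map $\psi\colon \Psi(M) \to \Psi(N)$, restrict along $\pi^*\colon Dist(G_\ev)\to Dist(G)$ and along $T$ to see $\psi$ is a $(Dist(G_\ev),T)$-module map, so by $\Psi_\ev$ it comes from a (unique) $G_\ev$-module map; similarly, restricting to $Dist(G_1)$ and using Lemma~\ref{lem:equivG1}, $\psi$ is a $G_1$-module map. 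A $G_\ev$-module map that is simultaneously a $G_1$-module map is a $G_\ev G_1 = G$-module map, because the comodule structure maps $\eta_M$ valued in $k[G]$ are determined by their compositions with the projections $k[G]\to k[G_\ev]$ and $k[G]\to k[G_1]$ — here one needs that the Hopf ideals cutting out $G_\ev$ and $G_1$ intersect in a controlled way, which is exactly the content of $G = G_\ev G_1$ and $G_\ev\cap G_1 = (G_\ev)_1$. Thus $\psi = \Psi(\phi)$ for a $G$-module map $\phi$.

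For essential surjectivity, start with a locally finite $(Dist(G),T)$-module $M$. By restriction it is a $(Dist(G_\ev),T)$-module, hence (by $\Psi_\ev$) carries a rational $G_\ev$-module structure with comodule map $\eta^\ev_M\colon M\to M\otimes k[G_\ev]$; by restriction to $Dist(G_1)$ (and using that $Dist(G_1)=k[G_1]^*$ with $k[G_1]$ finite-dimensional) it is a $G_1$-module with comodule map $\eta^1_M\colon M\to M\otimes k[G_1]$. These two are compatible over $(G_\ev)_1$ because both restrict to the same $Dist((G_\ev)_1)$-action coming from $Dist(G)$. One then assembles a $k[G]$-comodule map $\eta_M\colon M\to M\otimes k[G]$ out of $\eta^\ev_M$ and $\eta^1_M$ using the fact that multiplication $k[G_\ev]\otimes k[G_1]$-type data reconstructs $k[G]$ when $G = G_\ev G_1$; checking the counit and coassociativity axioms is routine given the compatibility over the intersection. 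Finally, $\Psi$ applied to this $G$-module returns the original $Dist(G)$-action on $M$ because both $\pi^*Dist(G_\ev)$ and $Dist(G_1)$ together generate $Dist(G)$ as an algebra (a consequence of the Kostant-basis Assumption, where every divided-power monomial is a product of an ``even'' one and an ``odd'' one).

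The main obstacle I expect is the reconstruction of the full $k[G]$-comodule (equivalently full $G$-module) structure from the $G_\ev$- and $G_1$-pieces: one must make precise the sense in which $k[G]$ is recovered from $k[G_\ev]$ and $k[G_1]$ via $G = G_\ev G_1$, and verify that the glued $\eta_M$ is coassociative — this is where the identity $G_\ev\cap G_1 = (G_\ev)_1$ (Lemma~\ref{lem:inters}) and the precise statement $G_r = (G_\ev)_r G_1$ (Lemma~\ref{lem:keygen}) do the real work. A clean way to package this is to observe that $G$-$\mathfrak{mod}$ is equivalent to the category of $G_\ev$-modules equipped with a compatible $G_1$-action (compatibility over $(G_\ev)_1$), and that $(Dist(G),T)$-$\mathfrak{mod}$ admits the parallel description in terms of $(Dist(G_\ev),T)$-modules with compatible $Dist(G_1)$-action; then $\Psi$ is manifestly the product of $\Psi_\ev$ and $\Psi_1$, each an equivalence, so $\Psi$ is an equivalence. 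The local finiteness hypothesis is needed precisely to ensure every $(Dist(G),T)$-module decomposes into finite-dimensional pieces on which these identifications are valid and on which $\Psi_\ev$ applies.
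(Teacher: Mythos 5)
Your approach is essentially the same as the paper's: both exploit the factorization $G = G_\ev G_1$ (Lemma~\ref{lem:keygen}) with $G_\ev \cap G_1 = (G_\ev)_1$ (Lemma~\ref{lem:inters}), lift the $(Dist(G_\ev),T)$- and $Dist(G_1)$-actions to $G_\ev$- and $G_1$-structures via $\Psi_\ev$ and $\Psi_1$, and use the compatibility over $(G_\ev)_1$ to glue them into a $G$-module structure. The paper packages this as an explicit inverse functor $\widetilde\Psi$ and leaves fully-faithfulness implicit (``clearly $\Psi$ and $\widetilde\Psi$ are inverses''), while you phrase it as faithful plus full plus essentially surjective, but the mathematical content is the same.
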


\begin{proof}
We define a functor $\widetilde{\Psi}$ from
$(Dist(G),T)$-$\mathfrak{mod}$ to $G$-$\mathfrak{mod}$ by lifting
every locally finite $(Dist(G),T)$-module to a $G$-module as
follows. Let $M$ be a locally finite $(Dist(G),T)$-module. Noting
$Dist(G_\ev) \subset Dist(G)$ as subalgebras and regarding $M$ as
a $(Dist(G_\ev),T)$-module, we can lift $M$ to an $G_\ev$-module
canonically as in \cite[pp.171]{Jan}. In the same vein, noting
$Dist(G_1) \subset Dist(G)$ as subalgebras and regarding $M$ as a
$Dist(G_1)$-module, we can lift $M$ to an $G_1$-module  via an
inverse functor of $\Psi_1$ (cf. Lemma~\ref{lem:equivG1}). The $M$
endowed with these two lifted structures coincide as
$(G_\ev)_1$-modules since both are lifted canonically from the
same $Dist((G_\ev)_1)$-module structure, and thus they coincide as
$G_1 \cap G_\ev$-modules by Lemma~\ref{lem:inters}. By
Lemmas~\ref{lem:inters} and \ref{lem:keygen}, we obtain a
well-defined $G$-module structure on $M$ by letting any $g =g_0g_1
\in G$ with $g_0\in G_\ev$ and $g_1\in G_1$ acts by composing the
actions of $g_0$ and $g_1$.

Clearly, $\Psi$ and $\widetilde{\Psi}$ are inverses of each other.
\end{proof}

\begin{remark}
It is straightforward to check the assumptions in
Theorem~\ref{th:equiv} for supergroups $GL(m|n)$ and $Q(n)$, cf.
\cite{BK2, BKu}. Thus, our Theorem~\ref{th:equiv} gives a simpler
and uniform proof of the equivalence of categories for these two
supergroups obtained earlier in {\em loc. cit.} by first
establishing an isomorphism between a restricted dual of the
superalgebra of distributions and the coordinate superalgebra. We
shall also see in the next section that the supergroups of type
$SpO$ satisfy these assumptions as well, and it is not clear if
the method in {\em loc. cit.} is applicable for these new
supergroups.
\end{remark}

\section{The Chevalley basis of Lie superalgebra $spo(2n|\ell)$}
\label{sec:chevalley}

\subsection{The supergroup $\SpO(2n|\ell)$}

The supergroup $\GL(r|s)$ is the functor which associates to any
$A \in \salg$ the group $GL(r|s;A)$ of all invertible $(r+s)
\times (r+s)$ matrices of the form
\begin{eqnarray} \label{GLmatrix}
g =\left[
    \begin{array}{rr}
        a  &  b   \\
        c  &  d
    \end{array}
    \right]
\end{eqnarray}
where $a$ (resp. $d$) is an $r \times r$ (resp. $s\times s$)
matrix with entries in $A_{\bar{0}}$, $b$, $c$ is $r \times s$
(resp. $s \times r$) with entries in $A_{\bar{1}}$. The
supertranspose of $g$ is defined as
\begin{eqnarray*}
g^{st} :=\left[
    \begin{array}{rr}
        a^t  &  c^t   \\
       -b^t  &  d^t
    \end{array}
    \right]
\end{eqnarray*}
where the superscript $t$ denotes the transpose of a matrix in the
usual sense. It is well known (cf. e.g. \cite{Ma}) that $g$ is
invertible if and only if both $a$ and $d$ are invertible. Let
$Mat_{r|s}$ be the affine superscheme with $Mat_{r|s} (A)$
consisting of al $(r+s) \times (r+s)$ matrices of the form
(\ref{GLmatrix}). Then $k[GL(r|s)]$ is the localization of
$k[Mat_{r|s}]$ at the function $\det: g \rightarrow \det a \det
b.$ The Lie superalgebra of $\GL(r|s)$, denoted by $\mathfrak {gl}
(r|s)$, consists of matrices of the form (\ref{GLmatrix}) with
$a,b,c,d \in k$, and the $\Z_2$-grading is defined such that
$\left[
    \begin{array}{rr}
        a  &  0   \\
        0  &  d
    \end{array}
    \right]$ is even and $\left[
    \begin{array}{rr}
        0  &  b   \\
        c  &  0
    \end{array}
    \right]$
is odd.

Recall (cf. \cite[Chap. 3]{Ma}) there is a morphism of supergroups
called the {\em Berezian or superdeterminant}, $\Ber: GL(r|s)
\rightarrow GL(1|0)$ defined as follows: for any $A \in \salg$,
$\Ber: GL(r|s; A) \rightarrow GL(1|0;A)$ sends an element $g$ in
(\ref{GLmatrix}) to
$$\Ber (g) =\det (a-bd^{-1}c) \cdot \det d^{-1}.
$$
It has various favorable properties, e.g.,
$\Ber (g^{st}) =\Ber(g)$.

Define the $(2n+2m+1) \times (2n+2m+1)$ matrix in the
$(n|n|m|m|1)$-block form
\begin{eqnarray} \label{symform}
\mathfrak J_{2n|2m+1} := \left[
    \begin{array}{rrrrr}
        0  &  I_n  & 0   & 0  & 0  \\
     -I_n  &  0    & 0   & 0  & 0  \\
        0  &  0    & 0   &I_m & 0  \\
        0  &  0    & I_m & 0  & 0  \\
        0  &  0    & 0   & 0  & 1
    \end{array}
    \right]
\end{eqnarray}
where $I_n$ is the $n \times n$ identity matrix. Let $\mathfrak
J_{2n|2m}$ denote the $(2n+2m) \times (2n+2m)$ matrix obtained
from $\mathfrak J_{2n|2m+1}$ with the last row and column deleted.
Denote by $\SpO(2n|\ell)$ (with $\ell =2m$ or $2m+1$) the
supergroup functor which associates to any $A \in \salg$ the group
which consists of all $(2n+\ell) \times (2n+\ell)$ matrices of the
form
\begin{eqnarray} \label{eq:spo}
\{g \in GL(2n|\ell;A) \mid g^{st} \mathfrak J_{2n|\ell} \; g =
\mathfrak J_{2n|\ell}, \Ber(g) =1\}.
\end{eqnarray}
Note that the defining relations in (\ref{eq:spo}) are actually
defined over $\mathbb F_p$. The underlying even subgroup is
$$
SpO(2n|\ell)_\ev =SpO(2n|\ell) \cap GL(2n|\ell)_\ev \cong Sp(2l)
\times SO(\ell).
$$

\subsection{Lie superalgebra $\spo(2n|\ell)$}

As in the case of Lie algebras, with the help of
Lemma~\ref{POINTD} we can identify the Lie algebra
$\Lie(\SpO(2n|\ell))$ with
$$ \spo(2n|\ell) := \{g \in \gl (2n|\ell) \mid g^{st} \mathfrak
J_{2n|\ell} + \mathfrak J_{2n|\ell} \, g =0 \}.
$$
The $\spo(2n|2m+1)$ consists of the $(2n+2m+1) \times (2n+2m+1)$
matrices in the following $(n|n|m|m|1)$-block form
\begin{eqnarray} \label{matrixSPO}
g = \left[
    \begin{array}{rrrrr}
    d      &   e     & y_1^t  & x_1^t &  z_1^t       \\
    f      &  -d^t   & -y^t   & - x^t &  -z^t  \\
    x      & x_1     &  a     &  b    &  -v^t  \\
    y      & y_1     &  c     & -a^t  &  -u^t   \\
    z      & z_1    & u      &  v    &    0
    \end{array}
    \right]
\end{eqnarray}
where  $b, c$  are skew-symmetric, and $e, f$ are symmetric
matrices. The Lie superalgebra $\spo(2n|2m+1)$ is called type
$B(m,n)$ in \cite{Kac}. We assume here and below to index the rows
and columns of (\ref{symform}) and (\ref{matrixSPO}) by the finite
set $I(2n|2m+1)$, where
$$I(r|s) :=\{-r, -r+1, \cdots,  -1; 1, 2, \cdots, s\}.$$
Similarly, the Lie superalgebra $\spo(2n|2m)$ consists of the
$(2n+2m) \times (2n+2m)$ matrices which are obtained from $g$ of
the form (\ref{matrixSPO}) with the last row/column deleted and
whose rows/columns are indexed by $I(2n|2m)$.

\subsection{Chevalley basis for $\spo(2n|2m+1)$}
\label{CHEVALB}
 The even subalgebra of $\spo(2n|2m+1)$ is
$sp(2n)\oplus so(2m+1)$. The Cartan subalgebra $\mathfrak h$ of
$\spo(2n|2m+1)$ is taken to be the subalgebra of diagonal
matrices, and it has a basis given by $E_{i-n,i-n} -E_{i,i},
E_{j,j} -E_{j+m,j+m} (-n \le i <0 < j \le m)$, and let $\delta_a,$
where $a \in I(n|m),$ be the dual basis. The standard set $\Pi$ of
simple roots is:
$$\{\delta_{-n} -\delta_{-n+1}, \ldots, \delta_{-2} -\delta_{-1},
\delta_{-1} - \delta_1, \delta_1 -\delta_2, \ldots, \delta_{m-1}
-\delta_m, \delta_m\}  \text{ for } m\ge 1,$$
with $\delta_{-1} - \delta_1$ being odd. The corresponding Dynkin
diagram is (where the node $\otimes$ denotes an odd simple root
twice of which is not root):
\begin{equation*}
%Dynkin diagram for $spo(2n|2m+1)$
\begin{array}{cc}
\setlength{\unitlength}{0.3in}
\begin{picture}(10,2)
\put(-3,1){\makebox(0,0)[c]{$\circ$}}
\put(1.5,1){\makebox(0,0)[c]{$\circ$}}
\put(3.5,1){\makebox(0,0)[c]{$\otimes$}}
\put(5.5,1){\makebox(0,0)[c]{$\circ$}}
\put(10,1){\makebox(0,0)[c]{$\circ$}}
\put(11.4,1){\makebox(0,0)[c]{$\circ$}}
 \put(-2.85,1){\line(1,0){1.15}}
 \put(-1.45,.96){{$\ldots\ldots$}}
  \put(0.2,1){\line(1,0){1.15}}
 \put(1.65,1){\line(1,0){1.62}}
 \put(3.70,1){\line(1,0){1.62}}
 \put(5.65,1){\line(1,0){1.15}}
 \put(7,.96){{$\ldots\ldots$}}
  \put(8.65,1){\line(1,0){1.17}}
 \put(10.1,0.85){$=$}
 \put(10.45,0.85){$\Longrightarrow$}
\put(-3,0.4){\makebox(0,0)[c]{$\delta_{-n}-\delta_{1-n}$}}
\put(1.1,0.4){\makebox(0,0)[c]{$\delta_{-2} -\delta_{-1}$}}
\put(3.5,0.4){\makebox(0,0)[c]{$\delta_{-1} -\delta_{1}$}}
\put(5.7,0.4){\makebox(0,0)[c]{$\delta_{1} -\delta_{2}$}}
\put(10,0.4){\makebox(0,0)[c]{$\delta_{m-1} -\delta_{m}$}}
\put(11.9,0.4){\makebox(0,0)[c]{$\delta_m$}}
\end{picture}
\end{array}
\vspace{.7cm}
\end{equation*}
If $m=0$, then the standard set $\Pi$ of simple roots is
$$\Pi =\{\delta_{-n} -\delta_{-n+1}, \ldots, \delta_{-2} -\delta_{-1},
\delta_{-1}\}, $$
and its Dynkin diagram is (where the node $\bullet$ denotes an odd
simple root twice of which is a root):
\begin{equation*}
%Dynkin diagram for $spo(2n|1)$
\begin{array}{cc}
\setlength{\unitlength}{0.3in}
\begin{picture}(10,2)
\put(1,1){\makebox(0,0)[c]{$\circ$}}
\put(5.5,1){\makebox(0,0)[c]{$\circ$}}
\put(6.9,1){\makebox(0,0)[c]{$\bullet$}}
 \put(1.15,1){\line(1,0){1.15}}
 \put(2.55,.96){{$\ldots\ldots$}}
 \put(4.2,1){\line(1,0){1.15}}
 \put(5.6,.85){$=$}
 \put(5.9,.85){$\Longrightarrow$}
\put(1,0.4){\makebox(0,0)[c]{$\delta_{-n}-\delta_{1-n}$}}
\put(5.4,0.4){\makebox(0,0)[c]{$\delta_{-2} -\delta_{-1}$}}
\put(7.2,0.4){\makebox(0,0)[c]{$\delta_{-1}$}}
\end{picture}
\end{array}
\end{equation*}
\vspace{.5cm}

The set of roots are $\Delta = \Delta_0 \cup \Delta_1$, a union of
sets of even roots $\Delta_0$ and odd roots $\Delta_1$, and
$\Delta = \Delta^+ \cup - \Delta^+$. Set $\Delta_i = \Delta^+_i
\cup - \Delta^+_i$ for $i=0,1$. More explicitly,
\begin{eqnarray*}
\Delta^+_0
 %&=& \Delta^+ (sp(2n)) \cup \Delta^+ (o(2m+1)) \\
 &=& \{\delta_i \pm \delta_j, -n \le i<j<0 \text{ or } 0 <i <j \le m\} \\
 &&\quad \cup \{2 \delta_i, -n \le i <0 \} \cup \{ \delta_j, 0 < j \le m\},  \\
\Delta^+_1 &=& \{ \delta_i \pm \delta_j, -n \le i<0 <j \le m \}
\cup \{\delta_i, -n \le i <0\}.
\end{eqnarray*}

Note that $\Delta^+$ is compatible with a set of positive roots of
$sp(2n)\oplus so(2m+1)$ whose fundamental system is given by:
$$\Pi_\ev := \{\delta_i -\delta_{i+1}, -n \le i < -1 ;
2\delta_{-1}\} \cup \{\delta_i -\delta_{i+1},  1 \le i <m;
\delta_m\}.
$$
It is understood that for $m=0$ the undefined $\delta_0$ is
omitted from $\Pi_\ev$.

Recall (cf. \cite{St2}) that $sp(2n)\oplus so(2m+1)$ admits a
Chevalley basis (unique up to signs) $\{H_{s} (s \in \Pi_\ev),
X_\alpha (\alpha \in \Delta_0)\}$, whose structure constants are
integers. One of the requirements \cite[Theorem~1]{St2} is that
\begin{equation} \label{eq:axiom}
[X_\alpha, X_\beta] = \pm (r+1) X_{\alpha +\beta}, \text{ if }
\alpha, \beta, \alpha +\beta \in \Delta_0,
\end{equation}
where $r$ is determined by the $\alpha$-string of roots through
$\beta$: $\{\beta + i\alpha \mid -r \le i \le q \}$.

\begin{remark} \label{shortvector}
Associated to the even short root $\delta_j$ with $0 < j \le m$,
we fix the sign and take the Chevalley root vector
%
%\begin{eqnarray*}
$X_{\delta_j} =\sqrt{2} (E_{j, 2m+1} -E_{2m+1, j+m})$
%\end{eqnarray*}
where $\sqrt{2} \in k$. The other Chevalley (long) root vectors
for $sp(2n)\oplus so(2m+1)$ always have entries $0, \pm 1$ in the
standard matrix form (\ref{matrixSPO}).
\end{remark}

Now we define the odd root vectors (indexed by their corresponding
roots): for $-n \le i<0<j \le m$,
%\begin{eqnarray*}
%X_{\delta_i +\delta_j} &:=& E_{j,i} +E_{i-n,j+m} \\
%%
%X_{-\delta_i -\delta_j} &:=&  E_{j+m, i-n} -E_{i,j} \\
%%
%X_{\delta_i -\delta_j} &:=&  E_{j+m,i} + E_{i-n,j} \\
%%
%X_{-\delta_i +\delta_j} &:=&  E_{j,i-n} -E_{i,j+m}\\
%%
%X_{\delta_i} &:=&  \sqrt{2} (E_{2m+1,i} +E_{i-n,2m+1}) \\
%%
%X_{-\delta_i} &:=& \sqrt{2} (E_{2m+1, i-n} -E_{i,2m+1}).
%\end{eqnarray*}
%
\begin{eqnarray}
X_{\delta_i +\delta_j} := E_{j,i} +E_{i-n,j+m}, &&
X_{-\delta_i -\delta_j} :=  E_{j+m, i-n} -E_{i,j} ,\label{oddvec1}\\
X_{\delta_i -\delta_j} :=  E_{j+m,i} + E_{i-n,j}, &&
X_{-\delta_i +\delta_j} :=  E_{j,i-n} -E_{i,j+m},  \label{oddvec2}\\
X_{\delta_i} :=  \sqrt{2} (E_{2m+1,i} +E_{i-n,2m+1}), &&
X_{-\delta_i} := \sqrt{2} (E_{2m+1, i-n} -E_{i,2m+1}).
\label{oddvec3}
\end{eqnarray}

\begin{proposition}
The Chevalley basis elements $\{H_{s} \,(s \in \Pi_\ev), X_\alpha
\, (\alpha \in \Delta_0)\}$ for the even subalgebra $sp(2n)\oplus
so(2m+1)$ together with $X_\alpha \,(\alpha \in \Delta_1)$ in
(\ref{oddvec1}-\ref{oddvec3}) form a basis for Lie superalgebra
$\spo(2n|2m+1)$ with integer structure constants.
\end{proposition}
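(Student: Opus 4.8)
The claim about the even part---that $\{H_s\ (s\in\Pi_\ev),\,X_\alpha\ (\alpha\in\Delta_0)\}$ is a basis of $sp(2n)\oplus so(2m+1)$ with integral structure constants---is the classical Chevalley theorem (cf.\ \cite{St2}), so the plan is to adjoin the odd root vectors (\ref{oddvec1})--(\ref{oddvec3}) and check two things: that the enlarged set is a basis, and that the new structure constants are integers. For the first, I would note that each matrix in (\ref{oddvec1})--(\ref{oddvec3}) lies in $\spo(2n|2m+1)$: in every formula the two elementary matrices are paired precisely so that $g\mapsto-(\mathfrak J_{2n|2m+1})^{-1}g^{st}\mathfrak J_{2n|2m+1}$ interchanges them up to the indicated sign, so the defining equation $g^{st}\mathfrak J_{2n|2m+1}+\mathfrak J_{2n|2m+1}\,g=0$ holds. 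Computing the $\mathfrak h$-weight of $X_\alpha$ shows it equals $\alpha$, so the $X_\alpha\ (\alpha\in\Delta_1)$ occupy distinct weight spaces and are linearly independent; and a dimension count closes the argument, since $|\Delta_1|=2(2nm+n)=4nm+2n$ is exactly the number of free odd matrix entries $x,x_1,y,y_1,z,z_1$ in the block form (\ref{matrixSPO}), i.e.\ $\dim\spo(2n|2m+1)_{\bar{1}}$. (Alternatively one may invoke that all root spaces of $B(m,n)$ are one-dimensional.)

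For integrality I would run through the brackets according to $\Z_2$-degree. Brackets of two even basis vectors are integral by the classical theorem. For $\alpha\in\Delta_1$ one has $[H_s,X_\alpha]=\alpha(H_s)X_\alpha$, and evaluating the odd roots on the $H_s$ ($s\in\Pi_\ev$) gives values in $\{0,\pm1,\pm2\}$. For $\alpha\in\Delta_0$ and $\beta\in\Delta_1$, the bracket lies in the root space for $\alpha+\beta$, so it vanishes unless $\alpha+\beta\in\Delta_1$; when it does not vanish, multiplying out the two explicit matrices produces an integer multiple of $X_{\alpha+\beta}$, with expected coefficient $\pm(r+1)$ where $r$ is the length of the $\alpha$-string of odd roots through $\beta$, in analogy with (\ref{eq:axiom}). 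Finally, for $\alpha,\beta\in\Delta_1$ the superbracket $[X_\alpha,X_\beta]=X_\alpha X_\beta+X_\beta X_\alpha$ lies in the even root space for $\alpha+\beta$, so it vanishes unless $\alpha+\beta\in\Delta_0\cup\{0\}$; if $\alpha+\beta=0$ it is $\pm$ a coroot, an integral combination of the $H_s$, and if $\alpha+\beta\in\Delta_0$ a direct matrix multiplication again gives an integer multiple of $X_{\alpha+\beta}$.

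The one case demanding genuine care---and the reason the $\sqrt{2}$'s are inserted into $X_{\pm\delta_i}$ in (\ref{oddvec3}) and into $X_{\delta_j}$ in Remark~\ref{shortvector}---is the new phenomenon that twice the odd root $\delta_i$ is the even root $2\delta_i$. There the superbracket $[X_{\delta_i},X_{\delta_i}]=2X_{\delta_i}^2$ works out to $4E_{i-n,i}=\pm 4\,X_{2\delta_i}$, so that the factor $(\sqrt{2})^2=2$ is exactly what keeps the structure constant an integer; the same $\sqrt{2}$-normalization is also what keeps the mixed brackets through the short even root $\delta_j$ integral (a bare elementary matrix there would yield a coefficient $\pm\sqrt{2}$). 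I expect this doubling case, together with the sign bookkeeping around $\delta_j$, to be the only real obstacle---it is a matter of care rather than of depth, the whole verification being a finite and essentially mechanical matrix computation.
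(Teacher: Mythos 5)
Your proposal is correct and is precisely the ``direct computation'' the paper invokes without detail: you verify that (\ref{oddvec1})--(\ref{oddvec3}) lie in $\spo(2n|2m+1)$ and span the odd part by a weight/dimension count, then run through the brackets by $\Z_2$-degree, isolating the one delicate case $[X_{\delta_i},X_{\delta_i}]=\pm4X_{2\delta_i}$ (twice an odd root being an even root) where the $\sqrt{2}$-normalization of (\ref{oddvec3}) and Remark~\ref{shortvector} is needed to keep the constants integral. This matches the paper's approach in spirit; the paper simply writes ``Follows by a direct computation.''
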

This basis will be called the {\em Chevalley basis} for
$\spo(2n|2m+1)$.
\begin{proof}
Follows by a direct computation.
\end{proof}

\subsection{Chevalley basis for $\spo(2n|2m), m\ge 2$}
\label{CHEBASIS}
 We continue to regard Lie superalgebra
$\spo(2n|2m)$ as a subalgebra of Lie superalgebra $\spo(2n|2m+1)$
in the matrix form (\ref{matrixSPO}). The even subalgebra of
$\spo(2n|2m)$ is $sp(2n) \oplus so(2m)$. The Cartan subalgebra of
$\spo(2n|2m)$ is the same as the Cartan subalgebra of
$\spo(2n|2m+1)$. The standard set $\Pi_\ev$ of simple roots for
$sp(2n)\oplus so(2m)$ is
$$\Pi_\ev := \{\delta_i -\delta_{i+1}, -n \le i < -1 ;
2\delta_{-1}\} \cup \{\delta_i -\delta_{i+1},  1 \le i <m;
\delta_{m-1}+\delta_m\}.$$

Lie superalgebra $\spo(2n|2m)$ with $m \ge 2$ (and $n \ge 1$) is
called type $D(m,n)$ in \cite{Kac}. Its standard set $\Pi$ of
simple roots is:
$$\{\delta_{-n} -\delta_{1-n}, \cdots, \delta_{-2} -\delta_{-1},
\delta_{-1} - \delta_1, \delta_1 -\delta_2, \cdots, \delta_{m-1}
-\delta_m, \delta_{m-1} + \delta_m\},$$
with $\delta_{-1} - \delta_1$ being odd. Its Dynkin diagram is

\vspace{.9cm}
\begin{equation*}
%Dynkin diagram for $spo(2n|2m)$
\begin{array}{cc}
\setlength{\unitlength}{0.3in}
\begin{picture}(10,2)
\put(-3,1){\makebox(0,0)[c]{$\circ$}}
\put(1.5,1){\makebox(0,0)[c]{$\circ$}}
\put(3.5,1){\makebox(0,0)[c]{$\otimes$}}
\put(5.5,1){\makebox(0,0)[c]{$\circ$}}
\put(10,1){\makebox(0,0)[c]{$\circ$}}
\put(11.92,1){\makebox(0,0)[c]{$\circ$}}
\put(10,2.25){\makebox(0,0)[c]{$\circ$}}
 \put(-2.85,1){\line(1,0){1.15}}
 \put(-1.45,.96){{$\ldots\ldots$}}
  \put(0.2,1){\line(1,0){1.15}}
 \put(1.65,1){\line(1,0){1.62}}
 \put(3.70,1){\line(1,0){1.62}}
 \put(5.65,1){\line(1,0){1.15}}
 \put(7,.96){{$\ldots\ldots$}}
 \put(8.65,1){\line(1,0){1.17}}
 \put(10.15,1){\line(1,0){1.62}}
 \put(10,1.15){\line(0,1){1}}
\put(-3,0.4){\makebox(0,0)[c]{$\delta_{-n}-\delta_{1-n}$}}
\put(1.1,0.4){\makebox(0,0)[c]{$\delta_{-2} -\delta_{-1}$}}
\put(3.5,0.4){\makebox(0,0)[c]{$\delta_{-1} -\delta_{1}$}}
\put(5.7,0.4){\makebox(0,0)[c]{$\delta_{1} -\delta_{2}$}}
\put(9.6,0.4){\makebox(0,0)[c]{$\delta_{m-2} -\delta_{m-1}$}}
\put(12.6,0.4){\makebox(0,0)[c]{$\delta_{m-1}-\delta_m$}}
\put(11.5,2.25){\makebox(0,0)[c]{$\delta_{m-1}+\delta_m$}}
\end{picture}
\end{array}
\end{equation*}
\vspace{.4cm}

The corresponding set of positive roots $\Delta^+ =\Delta_0^+ \cup
\Delta_1^+$, where
\begin{align*}
\Delta_0^+
%&= \Delta^+ (o(2m)) \cup \Delta^+ (sp(2n)) %
 &=
 \{\delta_i \pm \delta_j, -n \le i<j<0 \text{ or } 0 <i <j \le m\}
  \cup \{2 \delta_i, -n \le i <0 \}, \\
\Delta_1^+ &= \{\delta_i \pm \delta_j, -n \le i<0< j \le m\}.
\end{align*}
The set of roots is $\Delta = \Delta^+ \cup - \Delta^+$. Set
$\Delta_i = \Delta^+_i \cup - \Delta^+_i$ for $i=0,1$.
%Let $\{H_{s} \,(s \in \Pi_\ev), X_\alpha \, (\alpha \in
%\Delta_0)\}$ be the Chevalley basis for the even subalgebra
%$sp(2n)\oplus o(2m)$ of Lie superalgebra $\spo(2n|2m)$.

\begin{proposition}
Let $m \ge 2$. The Chevalley basis elements for the even
subalgebra $sp(2n)\oplus so(2m)$ together with the odd root
vectors $X_\alpha \,(\alpha \in \Delta_1)$ in
(\ref{oddvec1}-\ref{oddvec2}) form a basis for Lie superalgebra
$\spo(2n|2m)$ with integer structure constants.
\end{proposition}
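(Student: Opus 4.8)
The plan is to mimic the proof of the preceding proposition (the $B(m,n)$ case) almost verbatim, using the realization of $\spo(2n|2m)$ as the subalgebra of $\spo(2n|2m+1)$ consisting of the matrices \eqref{matrixSPO} with the last row and column deleted, so that every bracket is the matrix super-commutator inherited from $\gl(2n|2m)$. First I would check that the listed vectors form a $k$-basis: the even Chevalley vectors of $sp(2n)\oplus so(2m)$ are precisely the Chevalley vectors of $sp(2n)\oplus so(2m+1)$ supported away from the index $2m+1$, while the odd vectors in \eqref{oddvec1}--\eqref{oddvec2} are exactly the ones indexed by $\Delta_1=\{\pm(\delta_i\pm\delta_j)\colon -n\le i<0<j\le m\}$. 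These odd vectors are visibly linearly independent (distinct combinations of elementary matrices occupying the off-diagonal $(n|n)\times(m|m)$ blocks), so together with the even part they match $\dim\spo(2n|2m)$ and yield a basis.

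The substance is the integrality of the structure constants, which I would verify by splitting the brackets of basis elements into four types. (a) Even--even brackets remain in $sp(2n)\oplus so(2m)$ and have integer structure constants by Steinberg's characterization of a Chevalley basis, in particular \eqref{eq:axiom}; note that in type $D(m,n)$ the orthogonal summand $so(2m)$ is simply laced and the symplectic summand needs no $\sqrt2$-normalization, so every even Chevalley vector already has entries in $\{0,\pm1\}$. (b) For $H_s$ with $s\in\Pi_\ev$ and an odd root $\alpha$, $[H_s,X_\alpha]=\alpha(H_s)X_\alpha$ with $\alpha(H_s)\in\Z$, since $H_s$ is an explicit diagonal matrix and $\alpha=\delta_i\pm\delta_j$ is evaluated against the dual basis $\delta_a$ --- a one-line computation. (c) For $\alpha$ even and $\beta$ odd, $[X_\alpha,X_\beta]$ is odd, hence $0$ or a scalar multiple of $X_{\alpha+\beta}$ when $\alpha+\beta\in\Delta_1$; since all matrices involved have entries in $\{0,\pm1\}$, so does the commutator, and one reads off that the scalar is $\pm1$. (d) For $\alpha,\beta$ both odd the super-bracket is the anticommutator $X_\alpha X_\beta+X_\beta X_\alpha$, which is even; if $\alpha+\beta\in\Delta_0$ one checks it is $\pm X_{\alpha+\beta}$, if $\alpha=-\beta$ one checks it lies in the $\Z$-span of $\{H_s\colon s\in\Pi_\ev\}$, and otherwise --- in particular whenever $2\alpha=2\delta_i\pm2\delta_j$, which is never a root in type $D(m,n)$ --- it vanishes, so the ``twice an odd root is an even root'' phenomenon flagged in the introduction simply does not arise here.

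I expect the one genuinely delicate point to be the sign bookkeeping in case (d): when two odd root vectors anticommute to a root vector of $sp(2n)$ or of $so(2m)$, one must confirm the output is $\pm$ the \emph{chosen} Chevalley vector and not some unintended multiple, i.e.\ that a compatible choice of signs exists between the even Chevalley basis and the odd vectors \eqref{oddvec1}--\eqref{oddvec2}. This is handled exactly as in the $B(m,n)$ case: since $n\ge 1$ and $m\ge 2$, every even root vector of $sp(2n)\oplus so(2m)$ arises, up to sign, as such an anticommutator --- e.g.\ $X_{\delta_i+\delta_j}$ for $0<i<j\le m$ from $X_{-\delta_k+\delta_i}$ and $X_{\delta_k+\delta_j}$ with $-n\le k<0$, and similarly for the $sp(2n)$ vectors and for the long roots $2\delta_i$ --- so one may fix the odd vectors first, take these anticommutators as the even vectors, and then verify directly that they satisfy Steinberg's axioms \cite[Theorem~1]{St2}; the few remaining even brackets among generators not of this form are then forced by the super Jacobi identity. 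Everything reduces to a bounded, if tedious, matrix computation.
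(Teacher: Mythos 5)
Your proposal is essentially the paper's own argument: the paper simply states ``the proof is again by a direct computation,'' and your case analysis is a reasonable expansion of that. One point in case (d) is stated too strongly: when $\alpha,\beta$ are odd with $\alpha+\beta$ a long root $2\delta_i$ of $sp(2n)$, the anticommutator need not be $\pm X_{\alpha+\beta}$; for example
$$\{X_{\delta_{-1}+\delta_1},X_{\delta_{-1}-\delta_1}\}=2E_{-1-n,-1}=\pm 2\,X_{2\delta_{-1}},$$
so the structure constant is $\pm 2$, not $\pm 1$. This does not affect the conclusion (the constants are still integers), but it does mean your blanket ``$\pm X_{\alpha+\beta}$'' in (d) fails for these $\alpha+\beta=2\delta_i$ cases, and it also shows that the naive heuristic ``entries in $\{0,\pm1\}$ force scalar $\pm1$'' cannot be applied to the odd--odd anticommutators as it can to the even--odd commutators.
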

The proof of this proposition is again by a direct computation.
This basis will be called the {\em Chevalley basis} for
$\spo(2n|2m)$.
\subsection{Chevalley basis for $\spo(2n|2)$}

Lie superalgebra $\spo(2n|2)$ is called type $C(n)$ in \cite{Kac},
and it is more convenient to be realized as $osp(2|2n)$ which
consists of matrices of the $(1|1|n|n)$-block form
\[
\left[
    \begin{array}{rrrr}
    a   &   0  &  x  & x_1  \\
    0   &  -a  &  y  & y_1  \\
  y_1^t & x_1^t&  d  & e  \\
  -y^t  & -x^t  & f  & -d^t
    \end{array}
    \right]
\]
where $e, f$ are symmetric matrices. We index the rows and columns
by $I(2|2n).$  The even subalgebra is $so(2) \oplus sp(2n)$. The
Cartan subalgebra $\mathfrak h$ has a basis given by $E_{-2,-2}
-E_{-1,-1}, E_{i,i} -E_{n+i,n+i} (1\le i \le n)$, and let
$\delta_i \in \mathfrak h^* (i \in I(1|n))$ be the corresponding
dual basis.

A standard set $\Pi$ of simple roots is
%$\{\alpha_0 :=\delta_{-1} -\delta_1, \alpha_1 :=\delta_1
%-\delta_2, \cdots, \alpha_{n-1} := \delta_{n-1} -\delta_n,
%\alpha_n := 2\delta_n \}$
%
$\{\delta_{-1} -\delta_1, \delta_1 -\delta_2, \cdots, \delta_{n-1}
-\delta_n, 2\delta_n \}$ with $\delta_{-1} -\delta_1$ being odd.
The Dynkin diagram with respect to $\Pi$ is
\vspace{.2 cm}
\begin{equation*}
%Dynkin diagram for osp(2|2n)
\begin{array}{cc}
\setlength{\unitlength}{0.3in}
\begin{picture}(10,2)
\put(0,1){\makebox(0,0)[c]{$\otimes$}}
\put(2.1,1){\makebox(0,0)[c]{$\circ$}}
\put(7.40,1){\makebox(0,0)[c]{$\circ$}}
\put(8.85,1){\makebox(0,0)[c]{$\circ$}}
 \put(0.23,1){\line(1,0){1.7}}
 \put(2.27,1){\line(1,0){1.3}}
 \put(4.0,.96){$\ldots\ldots$}
 \put(5.9,1){\line(1,0){1.3}}
 \put(7.5,0.85){$\Longleftarrow$}
 \put(8.32,0.85){$=$}
\put(0,0.4){\makebox(0,0)[c]{$\delta_{-1} -\delta_{1}$}}
\put(2.4,0.4){\makebox(0,0)[c]{$\delta_{1} -\delta_{2}$}}
\put(7.2,0.4){\makebox(0,0)[c]{$\delta_{n-1} -\delta_{n}$}}
\put(9.05,0.4){\makebox(0,0)[c]{$2\delta_{n}$}}
\end{picture}
\end{array}
\end{equation*}
\vspace{.2 cm}

The set of positive roots is $\Delta^+ =\Delta_0^+ \cup
\Delta_1^+$, where
\begin{align*}
\Delta_0^+ &= \{\delta_i \pm \delta_j, 1\le i
<j \le n\} \cup \{2\delta_i, 1\le i \le n\},  \\
\Delta_1^+  &= \{\delta_{-1} \pm \delta_i, 1\le i \le n\}.
\end{align*}
We define the odd root vectors: for $1\le j \le n$,
\begin{eqnarray}
X_{\delta_{-1} +\delta_j} := E_{-2,j+n} +E_{j,-1}, &&
X_{-\delta_{-1} -\delta_j} :=  E_{j+n,-2} -E_{-1,j},\label{oddvec1C}\\
X_{\delta_{-1} -\delta_j} :=  E_{-2,j} - E_{j+n,-1}, &&
X_{-\delta_{-1}+\delta_j} :=E_{-1,j+n}+E_{j,-2}.\label{oddvec2C}
\end{eqnarray}

\begin{proposition}
The Chevalley basis elements for $sp(2n)$, the vector $E_{-2,-2}
-E_{-1,-1}$ in $so(2)$, together with the odd root vectors
$X_\alpha \,(\alpha \in \Delta_1)$ in
(\ref{oddvec1C}-\ref{oddvec2C}) form a basis for Lie superalgebra
$\spo(2n|2)$ with integer structure constants.
\end{proposition}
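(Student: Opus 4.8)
The plan is to prove the proposition by a direct computation in the matrix realization $\spo(2n|2)\cong osp(2|2n)$ in the $(1|1|n|n)$-block form, organizing the work according to the type of superbracket involved. First I would dispose of the linear-algebra part: the listed elements are, up to signs, sums of distinct matrix units, so they are manifestly linearly independent, and their number is $n(2n+1)$ (the Chevalley basis of $sp(2n)$) plus $1$ (the vector $E_{-2,-2}-E_{-1,-1}$ of $so(2)$) plus $4n=|\Delta_1|$ (the odd root vectors in (\ref{oddvec1C})--(\ref{oddvec2C})), which equals $\dim osp(2|2n)$. Hence the elements form a basis, and it remains only to verify that their span is closed under the superbracket with integer structure constants.

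The bracket computations split into four cases. (i) Even--even brackets: since $so(2)$ is central in the even subalgebra $so(2)\oplus sp(2n)$ and $[so(2),sp(2n)]=0$, this reduces to the classical fact that the chosen Chevalley basis of $sp(2n)$ has integer structure constants, with $[X_\alpha,X_\beta]=\pm(r+1)X_{\alpha+\beta}$ as in (\ref{eq:axiom}) (cf. \cite{St2}). (ii) The Cartan basis acting on the odd root vectors: each $X_\alpha$ with $\alpha\in\Delta_1$ is a weight vector, and the eigenvalues of $E_{i,i}-E_{n+i,n+i}$ and $E_{-2,-2}-E_{-1,-1}$ on it are read off from (\ref{oddvec1C})--(\ref{oddvec2C}) and are integers. (iii) The bracket of an $sp(2n)$ root vector $X_\beta$ ($\beta\in\Delta_0$) with an odd root vector $X_\alpha$: the result lies in the weight space of $\alpha+\beta$, so it is $0$ when $\alpha+\beta\notin\Delta$ and equals $\pm c\,X_{\alpha+\beta}$ with $c\in\{1,2\}$ when $\alpha+\beta\in\Delta_1$; a short matrix multiplication identifies $c$ and the sign in each case. (iv) The bracket of two odd root vectors $X_\alpha,X_\gamma$: the result lies in the even part; if $\alpha+\gamma=0$ it is an explicit diagonal matrix, which one expresses as an integer combination of $E_{-2,-2}-E_{-1,-1}$ and the $sp(2n)$ Chevalley Cartan elements; if $\alpha+\gamma$ is an even root, necessarily of the form $\delta_i\pm\delta_j$ or $2\delta_i$, the result is $\pm c\,X_{\alpha+\gamma}$ with $c\in\{1,2\}$; and otherwise it is $0$.

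I expect the only real work to lie in cases (iii) and (iv): there is no conceptual difficulty, but the sign bookkeeping must be carried out carefully, because the signs of the odd root vectors cannot be chosen independently of the already-fixed Chevalley signs of $sp(2n)$, and one must confirm that every diagonal matrix produced by an odd--odd bracket lies in the integer span of the chosen Cartan basis — which it does, since $\{E_{i,i}-E_{n+i,n+i}:1\le i\le n\}$ spans the same lattice as the $sp(2n)$ Chevalley coroots. In contrast to the type $B$ case, no odd root $\alpha$ of $\spo(2n|2)$ has $2\alpha$ a root, so no $\sqrt2$-normalization of the odd vectors is needed here; nevertheless a coefficient $2$ can still appear — for instance when $\alpha+\gamma$ is a long root $2\delta_i$ of $sp(2n)$ — and this is harmless, the structure constants remaining integral. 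Finally, as in the preceding cases of this section, I would record that the resulting structure constants obey the uniform Steinberg-type characterization, the relevant root strings being immediate from the explicit root data listed above.
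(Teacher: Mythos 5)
Your proposal is correct and takes the same approach as the paper, which offers only the one-line justification that the proposition ``follows by a direct computation.'' Your dimension count ($n(2n+1)+1+4n=\dim\spo(2n|2)$), the case split by bracket type, the observation that every diagonal matrix produced by an odd--odd bracket lies in the integer span of $E_{-2,-2}-E_{-1,-1}$ and the $sp(2n)$ coroot lattice, and the observation that no $\sqrt{2}$-normalization is needed in type $C(n)$ are all accurate and carry the proof.

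One caution on your closing sentence, where you plan to ``record that the resulting structure constants obey the uniform Steinberg-type characterization'' of the remark immediately following this proposition. With the paper's conventions one computes
$[X_{\delta_{-1}+\delta_j},X_{-\delta_{-1}+\delta_j}]=2E_{j,j+n}=2X_{2\delta_j}$,
while the $(\delta_{-1}+\delta_j)$-string through $-\delta_{-1}+\delta_j$ has $r=0$ (since $-2\delta_{-1}\notin\Delta\cup\{0\}$), so the formula $\pm(r+1)$ would predict coefficient $\pm 1$, not $\pm 2$. In other words, you should not report the Steinberg-type identity as holding for $\spo(2n|2)$ without first reconciling this case; it is immaterial to the proposition itself, which asserts only that the structure constants are integers, and your verification of that is complete.
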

The proof of this proposition is again by a direct computation.
This basis will be called the {\em Chevalley basis} for
$\spo(2n|2)$.
\begin{remark}
The Chevalley basis for $\spo(2n|\ell)$ introduced in this paper
has the following remarkable property (which can be verified case
by case):
$$[X_\alpha, X_\beta] = \pm (r+1) X_{\alpha +\beta}, \text{ if }
\alpha, \beta, \alpha +\beta \in \Delta,$$
where $r$ is determined by the $\alpha$-string of roots through
$\beta$: $\{\beta + i\alpha \mid -r \le i \le q \} \subseteq
\Delta \cup \{0\}$. Of course, this is part of the definition of
the Chevalley basis for the even subalgebra of $spo(2n|\ell)$, and
it makes no difference in this case (and most other cases) to have
$\Delta \cup \{0\}$ here instead of the usual $\Delta$.  The
additional $\{0\}$ is needed exactly when $\ell =2m+1$ and $\alpha
= \beta =\pm \delta_i$ for $-n \le i<0$, where twice a root
happen to be a root.

This observation suggests a possible uniform definition of
Chevalley basis for all simple Lie superalgebras of the classical
types (which has been classified \cite{Kac}).
\end{remark}

%%
%%
%%
%\section{Basis for the superalgebra of distributions}
%
%
%
\subsection{Basis of the superalgebra of distributions}

Let $G =\SpO(2n|\ell)$ and $\mathfrak g =\spo(2n|\ell)$ with
Chevalley basis $\{X_\alpha, H_s, \mid \alpha\in \Delta, s\in
\Pi_\ev \}$. Denote by $\mathfrak g_\C$ the complex Lie
superalgebra $\spo(2n|\ell, \C)$. The PBW theorem for Lie
superalgebra implies that the universal enveloping superalgebra
$\mathcal U(\mathfrak g_\C)$ has a basis
$$\prod_{\alpha\in \Delta_0} X_{\alpha}^{n_\alpha} \prod_{s \in
\Pi_\ev}  H_{s}^{m_s} \prod_{\beta\in \Delta_1}
X_{\beta}^{\varepsilon_\beta}
$$
with $n_\alpha, m_s \in \Z_+$ and $\varepsilon_\beta \in \{0,1\}$.
Define the Kostant $\Z$-form $\mathcal U_\Z$ to be the
$\Z$-subalgebra of $\mathcal U(\mathfrak g_\C)$ generated by the
following elements
\begin{align}
&X_{\alpha}^{(r)} :=\frac{X_\alpha^r}{r!},\:\: \alpha \in
\Delta_0, r \in \Z_+; \quad   X_{\beta}, \:\: \beta\in \Delta_1;
 \nonumber\\
&{H_{s} \choose  m_s} :=\prod_{j=0}^{m_s-1} (H_s-j)\slash m_s!,
\:\: s\in \Pi_{\ev}, m_s\in \Z_+.  \nonumber
 \end{align}
We define $\mathcal U_k := \mathcal U_\Z \otimes_\Z k.$

\begin{theorem}\label{KOSTANTZFORM}
 \begin{enumerate}
\item[(a)] The superalgebra $\mathcal U_\Z$ is a free $\Z$-module with
basis
\begin{equation}\label{BASIS}
\prod_{\alpha\in \Delta_0} X_{\alpha}^{(n_\alpha)} \prod_{s \in
\Pi_\ev}  {H_{s} \choose  m_s} \prod_{\beta\in \Delta_1}
X_{\beta}^{\varepsilon_\beta}
\end{equation}
for $n_\alpha, m_s \in \Z_+$ and $\varepsilon_\beta \in \{0,1\}$,
where the product is taken in any fixed order.
\item[(b)] As Hopf superalgebras, $\mathcal U_k$ is isomorphic to
$\Dist(G)$.
\end{enumerate}
\end{theorem}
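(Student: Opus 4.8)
The plan is to follow the classical Kostant-basis argument for Chevalley groups (as in Steinberg and Jantzen), adapted to the super setting, and then match it with the distribution algebra via the equivalence already set up for the even part.

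For part (a), I would first reduce to the statement that $\mathcal U_\Z$ is $\Z$-spanned by the monomials \eqref{BASIS}; their $\Z$-linear independence is immediate since they are already linearly independent over $\C$ by the PBW theorem for $\mathcal U(\mathfrak g_\C)$. To prove the spanning, I would set up a straightening (commutation) algorithm: take an arbitrary product of the generators $X_\alpha^{(r)}$ ($\alpha\in\Delta_0$), ${H_s\choose m_s}$, $X_\beta$ ($\beta\in\Delta_1$) and show that by repeatedly applying commutation relations it can be rewritten as a $\Z$-combination of monomials in the prescribed order. The even-even commutations are exactly Kostant's classical relations for $sp(2n)\oplus so(\ell)$ and are already known to stay in $\mathcal U_\Z$; what is new here are (i) the even-odd relations $[X_\alpha^{(r)},X_\beta]$ and (ii) the odd-odd relations $[X_\beta,X_{\beta'}]$ and the ``divided power'' phenomenon when $2\beta$ is a root. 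For (i) one uses the identity
\begin{equation*}
X_\alpha^{(r)} X_\beta = \sum_{i\ge 0}(\ad X_\alpha)^{(i)}(X_\beta)\, X_\alpha^{(r-i)}
\end{equation*}
in $\mathcal U(\mathfrak g_\C)$; since the Chevalley basis has integer structure constants and $(\ad X_\alpha)^{(i)}(X_\beta)$ is an integral combination of root vectors (a finite sum because $\ad X_\alpha$ is nilpotent on $\mathfrak g$), the right-hand side lies in $\mathcal U_\Z$. Here the string-length property recorded in the Remark — $[X_\alpha,X_\beta]=\pm(r+1)X_{\alpha+\beta}$ with the string taken in $\Delta\cup\{0\}$ — is exactly what guarantees the binomial coefficients coming out of iterated brackets are integers. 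For (ii), the key point is that $X_\beta^2=\hf[X_\beta,X_\beta]$ for odd $\beta$, and when $2\beta\in\Delta_0$ (the case $\ell=2m+1$, $\beta=\pm\delta_i$) one checks directly that $\hf[X_{\delta_i},X_{\delta_i}] = \pm X_{2\delta_i}$ using the explicit vectors in \eqref{oddvec3} together with the normalization $X_{\delta_j}=\sqrt2(\cdots)$ from Remark~\ref{shortvector}; thus $X_\beta^2 = \pm X_{2\beta} \in \mathcal U_\Z$ and no new divided power is needed, while $X_\beta^{\varepsilon_\beta}$ with $\varepsilon_\beta\in\{0,1\}$ suffices to parametrize the odd part. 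When $2\beta\notin\Delta$, one has simply $X_\beta^2=0$ in $\mathfrak g$, hence $X_\beta^2$ is central-ish and reorderable. I would organize these as a handful of lemmas on commutators of generators, all proved by a direct check using the explicit matrix realizations of the preceding subsections, and then invoke a standard PBW-straightening/Noetherian-induction argument (induction on total degree, with a monomial ordering) to conclude spanning.

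For part (b), I would argue exactly as in the even case (Jantzen II.1.12) together with what is already in this paper. By Theorem~\ref{th:equiv} (once its hypotheses are verified, which the Chevalley-basis constructions of this section do) and Lemma~\ref{POINTD}, $\Dist(G)$ is a cocommutative Hopf superalgebra containing $\Lie(G)=\mathfrak g$ and the divided powers; it is generated by $\Dist(G_1)$ together with $\Dist(G_\ev)$, and $\Dist(G_\ev)\cong\mathcal U_k(\mathfrak g_\ev)$ has the Kostant basis by the classical theory. There is a natural algebra map $\mathcal U_k\to\Dist(G)$ sending each generator to the corresponding distribution (well-defined since the integral relations hold in $\Dist(G)$); it is a Hopf map because the generators are primitive or grouplike-type in the appropriate sense (the comultiplication formulas $\Delta(X_\alpha^{(r)})=\sum X_\alpha^{(i)}\otimes X_\alpha^{(r-i)}$, $\Delta(X_\beta)=X_\beta\otimes1+1\otimes X_\beta$, and the binomial identity for ${H_s\choose m}$ all hold in $\Dist(G)$). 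Surjectivity follows because the image contains $\Dist(G_\ev)$ and $\Dist(G_1)$, which generate. Injectivity follows by comparing graded dimensions: part (a) gives the size of $\mathcal U_k$, and filtering $\Dist(G)$ by the $\cj$-adic filtration and using $\Dist(G)\cong\Dist(G_\ev)\otimes\Lambda(\mathfrak g_{\bar1})$ as a coalgebra (from the decomposition $k[G]\cong k[G_\ev]\otimes\Lambda$ dual to the super structure) one sees the dimensions match in each degree.

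The main obstacle I expect is precisely the bookkeeping in part (a): verifying that all the mixed (even-odd) and odd-odd commutation relations among the generators have coefficients in $\Z$ — in particular handling uniformly across the four families $B(0,n)$, $C(n)$, $B(m,n)$, $D(m,n)$ the anomalous case where $2\delta_i$ is a root, where the $\sqrt2$ normalization of $X_{\pm\delta_i}$ and $X_{\delta_j}$ conspires to keep everything integral. The rest is standard: the straightening argument is routine once the commutator lemmas are in hand, and part (b) is a formal consequence of part (a) plus the equivalence of categories, mirroring \cite{BK2, BKu} and \cite{Jan}.
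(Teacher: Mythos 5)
Your proposal follows essentially the same route as the paper: part (a) is Steinberg's Kostant-basis argument augmented by integrality checks for the new commutators involving odd root vectors, and part (b) is the standard reduction-mod-$p$ comparison with $\Dist(G)$ as in Jantzen II.1.12 and \cite{BK2,BKu}. The paper's proof of (a) records the Cartan–odd relation $X_\beta{H_s\choose t}=\sum_r(-1)^{t-r}{\beta(H_s)\choose t-r}{H_s\choose r}X_\beta$, notes $(\ad X_\alpha)^3X_\beta=0$ always, and then isolates the one genuinely new case $(\ad X_\alpha)^2X_\beta\neq 0$ (which happens only for $\ell=2m+1$, $\alpha=\delta_j$, $\beta=\pm\delta_i-\delta_j$), where it verifies directly that $\tfrac12(\ad X_\alpha)^2X_\beta$ is an integral root vector; your general identity $X_\alpha^{(r)}X_\beta=\sum_i(\ad X_\alpha)^{(i)}(X_\beta)X_\alpha^{(r-i)}$ reduces to exactly this check. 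One small slip: using the root-string formula from the Remark, the $\delta_i$-string through $\delta_i$ has $r=3$ (since $0,-\delta_i,-2\delta_i\in\Delta\cup\{0\}$), so $[X_{\delta_i},X_{\delta_i}]=\pm4X_{2\delta_i}$ and therefore $X_{\delta_i}^2=\tfrac12[X_{\delta_i},X_{\delta_i}]=\pm2X_{2\delta_i}$, not $\pm X_{2\delta_i}$ as you wrote; the conclusion $X_{\delta_i}^2\in\mathcal U_\Z$ is of course unaffected, but the constant matters if one tries to match normalizations. Finally, for a complete straightening argument one must treat \emph{both} anomalies — the even–odd one $(\ad X_{\delta_j})^2X_{\pm\delta_i-\delta_j}\neq0$ that the paper emphasizes, and the odd–odd one $X_{\pm\delta_i}^2\neq0$ that you emphasize — since they arise in different reordering steps.
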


\begin{proof}
The isomorphism in (b) is the reduction modulo $p$ of the
isomorphism over $\C$ just as for algebraic groups, cf.
\cite[II.1.12]{Jan}; also see \cite{BK2, BKu} for the supergroups
of type $Q$ and $A$.

The proof of (a) basically follows the proof of \cite[Theorem
2]{St2}, with the following additional computation regarding the odd
root vectors. First, we have for $\beta \in \Delta_1$ that
$$X_\beta {H_s \choose t}
%&= {H_s - \beta(H_s)\choose q} X_\beta,
=\sum_{r=0}^t (-1)^{t-r} {\beta(H_s)\choose t-r} {H_s \choose r}
X_\beta$$
where the coefficients on the right-hand side are integers thanks
to a Chevalley basis property $\beta(H_s) \in \Z$.

Given $\alpha \in\Delta_0, \beta\in \Delta_1$, we have $(\ad
X_\alpha)^3 X_\beta =0$ since $\beta + 3 \alpha$ is never a root
for $\spo(2n|\ell)$ by inspection. If $(\ad X_\alpha)^2
X_\beta=0$, then
$$X_\beta X_\alpha^{(r)}
=X_\alpha^{(r)} X_\beta +  \sum_{\nu\in \Delta_1}
c_{\beta,\alpha}^{\nu} X_\alpha^{(r-1)} X_\nu$$
where $[X_\beta, X_\alpha] =\sum_{\nu\in \Delta_1}
c_{\beta,\alpha}^{\nu} X_\nu$ with all $c_{\beta,\alpha}^{\nu}\in
\Z$ by the construction of Chevalley basis for $\spo(2n|\ell)$.

By inspection, the inequality $(\ad X_\alpha)^2 X_\beta \neq 0$
occurs exactly when $\ell =2m+1$, $\alpha =\delta_j$ and $\beta =
\pm \delta_i -\delta_j$  (or the pair of $\alpha, \beta$ with a
simultaneous change of signs), where $-n \le i < 0<j\le m$. By
Remark~\ref{shortvector} and the explicit formulas for odd root
vectors, we have
\begin{align*}
X_\beta X_\alpha^{(r)}
 &=X_\alpha^{(r)} X_\beta +   X_\alpha^{(r-1)} [X_\beta, X_\alpha]
 + X_\alpha^{(r-2)} \cdot \frac12 (\ad X_\alpha)^2 X_\beta \\
 &= X_\alpha^{(r)} X_\beta +  X_\alpha^{(r-1)} X_{\pm \delta_i}
 - X_\alpha^{(r-2)} X_{\pm \delta_i
+\delta_j}.
 \end{align*}
We observe that all the coefficients on the right-hand side are
integers.
\end{proof}
From now on, we will always identify $\Dist(G)$ with $\mathcal
U_k$ (and also the corresponding subalgebras between them).

\subsection{Various subalgebras}
%\begin{remark}
Let $T$ be the maximal torus of $G =\SpO(2n|\ell)$ which consists
of the diagonal matrices and $B$ be the Borel subgroup
corresponding to $\Delta^+$. Then $Dist(T)$ has a basis given by
${H_{s} \choose  m_s}$ for $m_s \in \Z_+, s \in \Pi_\ev$, and
$Dist(B)$ has a basis given by
\begin{eqnarray} \label{eq:basis}
\prod_{\alpha\in \Delta_0^+} X_{\alpha}^{(n_\alpha)} \prod_{s \in
\Pi_\ev}  {H_{s} \choose  m_s} \prod_{\beta\in \Delta_1^+}
X_{\beta}^{\varepsilon_\beta}
\end{eqnarray}
for $n_\alpha, m_s \in \Z_+$ and $\varepsilon_\beta \in \{0,1\}.$

Clearly, the Frobenius morphism $F^r$ preserves the various
subgroups $T, B_\ev,B$ of $G$, for $r \ge 1$. We denote by
$\mathcal U_r$ the $k$-subalgebra of $\mathcal U_k$ spanned by the
elements (\ref{BASIS}) with $0 \le n_\alpha < p^r$, $0\le m_s <
p^r$ and $\varepsilon_\beta \in \{0,1\}$. As in the purely even
case (cf. \cite{Jan}), $\Dist(G_r)\cong \mathcal U_r$, and we will
not distinguish these two superalgebras. In particular,
$\Dist(G_1)$ is the restricted enveloping superalgebra of
$\mathfrak g$. Similarly, a basis for $Dist(B_r)$ of the $r$-th
Frobenius kernel $B_r$ is given by the elements (\ref{eq:basis})
for $0 \le n_\alpha < p^r$, $0\le m_s < p^r$ and
$\varepsilon_\beta \in \{0,1\}$.
%\end{remark}

%%
%%
%%
%%
\section{A tensor product theorem for $\SpO(2n|\ell)$}
\label{sec:tensor}

Let $G =SpO(2n|\ell)$. The equivalence of categories established
in Theorem~\ref{th:equiv} allows us to study $G$-modules via a
highest weight theory of $Dist(G)$-modules. The proofs in this
section are similar to \cite{BK2, BKu, Ku} for $Q(n)$ and
$GL(m|n)$, which in turn were super modification from standard
developments in the algebraic group setup (cf. \cite{CPS, Jan}).
\subsection{Highest weight modules}

We continue to denote $G =SpO(2n|\ell)$ with maximal torus $T$ and
Borel subgroup $B$ as before. The character group is
$$X(T) = \left \{\Sigma_{i \in I(n|m)} \la_i \delta_i \mid \text{ all }
\la_i \in \Z \right \}.
$$
A standard symmetric bilinear form on $X(T)$ is defined by
\begin{eqnarray*}
 (\delta_i, \delta_j) =\left \{
 \begin{array}{rl}
0, & \text{ if } i \neq j\\
1, & \text{ if } i=j<0\\
-1, & \text{ if } i=j>0.
\end{array}
\right.
\end{eqnarray*}
Denote $Y(T) = \Hom (\mathbb G_m, T)$. There is a natural pairing
$\langle\;\;, \;\; \rangle: X(T) \times Y(T) \rightarrow \Z$. Given
an even root $\alpha \in \Delta_0$, its coroot $\alpha^\vee \in
Y(T)$ is defined as in \cite{Jan}. The Weyl group $W$ is generated
by the reflections $s_\alpha$ for $\alpha \in \Delta_0$, where
$s_\alpha \la = \la - \langle \la, \alpha^\vee \rangle \alpha$ for
$\la\in X(T)$. The set
$$
X^+(T) := \left \{ \la  =\Sigma_{i \in I(n|m)} \la_i \delta_i \in
X(T) \mid \langle \la, \alpha^\vee \rangle \ge 0 \text{ for all }
\alpha \in \Delta_0^+ \right \}
$$
can be easily identified with
$$\{ \la  \in X(T) \mid
  \la_{-n} \ge \cdots \ge \la_{-1} \ge 0,
 \la_{1} \ge \cdots \ge \la_{m} \ge 0 \}, \text{ if } \ell =2m+1,$$
and identified with
$$\{ \la  \in X(T) \mid
  \la_{-n} \ge \cdots \ge \la_{-1} \ge 0,
 \la_{1} \ge \cdots \ge \la_{m-1} \ge |\la_{m}| \ge 0 \}, \text{ if } \ell =2m.$$

For an $\lambda \in X(T)$ and a $\Dist(G)$-module $M$, the
$\lambda$-weight subspace of $M$ is
$$
M_\lambda=\left\{ m\in M \mid {H_s\choose m_s}m={\langle \lambda,
H_s\rangle \choose m_s}m, \; \forall s\in \Pi_\ev, m_s\in \Z_+
\right\}.
$$
For $ \la  \in X(T)$, we denote the Verma module
$$
M(\la) =Dist(G) \otimes_{Dist(B)} k_\la
$$
where $k_\la$ is the one-dimensional $Dist(B)$-module of weight
$\la$ (in degree $\bar{0}$). It is standard to see that the
$Dist(G)$-module $M(\la)$ has a unique simple quotient $L(\la)$
and that  the $Dist(G)$-modules $L(\la)$, $\la \in X(T)$, are
pairwise non-isomorphic. By definition, $L(\la)$ is $X(T)$-graded
and thus a $T$-module.

\begin{lemma}  \label{lem:simple}
Every simple module $M$ in the category
$(Dist(G),T)$-$\mathfrak{mod}$ is isomorphic to a
finite-dimensional highest weight module $L(\la)$ for some $\la
\in X^+(T)$.
\end{lemma}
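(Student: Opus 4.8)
The plan is to show that an arbitrary simple object $M$ of $(Dist(G),T)$-$\mathfrak{mod}$ is a highest weight module, then invoke local finiteness to pin down the weight. First I would use that $M$ is a locally finite $(Dist(G),T)$-module: pick any nonzero finite-dimensional $T$-stable, $Dist(B_\ev)$-stable submodule (which exists since $M$ is a sum of finite-dimensional submodules and $Dist(B_\ev)$ together with $T$ generates a locally finite action), and inside it choose a weight vector $v$ of weight $\la$ maximal with respect to the dominance order coming from $\Delta^+$. Because $\Delta^+$ is a finite set of positive roots and the positive root vectors $X_\alpha$ ($\alpha\in\Delta_0^+$) and $X_\beta$ ($\beta\in\Delta_1^+$) all raise the weight, maximality forces $X_\alpha v = 0$ and $X_\beta v = 0$ for all positive $\alpha,\beta$; since the ${H_s\choose m_s}$ act on $v$ through the scalars determined by $\la$, the line $kv$ is a $Dist(B)$-submodule isomorphic to $k_\la$. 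Hence there is a nonzero $Dist(G)$-homomorphism $M(\la)\to M$ sending $1\otimes 1 \mapsto v$, whose image is a nonzero submodule; by simplicity of $M$ it is all of $M$, so $M$ is a quotient of $M(\la)$ and therefore $M\cong L(\la)$.

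Next I would establish that $\la\in X^+(T)$. For each even positive root $\alpha$, the elements $X_\alpha, X_{-\alpha}, \alpha^\vee$ span an $\mathfrak{sl}_2$-triple, and the corresponding divided-power elements $X_{\pm\alpha}^{(r)}$ together with ${\alpha^\vee \choose r}$ generate a copy of the Kostant $\Z$-form of $U(\mathfrak{sl}_2)$ inside $Dist(G)$. Restricting $L(\la)$ (or just the cyclic $Dist(G)$-module generated by $v$) to this subalgebra, the highest weight vector $v$ generates a finite-dimensional module for this $\mathfrak{sl}_2$-distribution algebra — finiteness coming from local finiteness of $M$ — and the standard $\mathfrak{sl}_2$ argument in positive characteristic (cf. \cite[II.1]{Jan}) forces $\langle\la,\alpha^\vee\rangle\ge 0$. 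Running this over all $\alpha\in\Delta_0^+$ gives $\la\in X^+(T)$, which by the identifications recorded just before the lemma is the explicit dominance condition on the coordinates $\la_i$.

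Finally I would argue finite-dimensionality of $L(\la)$. Since $M\cong L(\la)$ is an object of $(Dist(G),T)$-$\mathfrak{mod}$, it is by definition locally finite; as $L(\la)$ is generated by the single weight vector $v$, it equals a single finite-dimensional $Dist(G)$-submodule containing $v$, hence $L(\la)$ is finite-dimensional. (Equivalently: each weight space of $L(\la)$ is finite-dimensional by the PBW-type basis of $Dist(G)$ from Theorem~\ref{KOSTANTZFORM}, the weights of $L(\la)$ lie in $\la - \Z_{\ge 0}\Delta^+$, and local finiteness together with $W$-invariance of the character forces only finitely many weights to occur.)

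\textbf{Main obstacle.} The delicate point is the passage from ``$M$ locally finite'' to ``$v$ generates a \emph{finite-dimensional} $Dist(G)$-module'', and the interplay with the $\mathfrak{sl}_2$ dominance argument: a priori $L(\la)$ for $\la\in X^+(T)$ need not be finite-dimensional (exactly the phenomenon flagged in the introduction for $\ell\ge 3$), so the proof must genuinely use that $M$ already lies in the category of locally finite modules rather than deducing finite-dimensionality from $\la\in X^+(T)$ alone — the latter implication is precisely what fails and is the subject of the later classification section. I would be careful to invoke local finiteness twice: once to extract the dominant integral highest weight via $\mathfrak{sl}_2$ theory, and once more to conclude finite dimensionality of the resulting $L(\la)$.
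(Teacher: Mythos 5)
Your overall strategy — extract a highest weight vector, realize $M$ as a quotient of a Verma module and hence as $L(\lambda)$, read off dominance of $\lambda$ from the $\mathfrak{sl}_2$-theory inside $Dist(G_\ev)$, and invoke local finiteness for finite dimensionality — is the same as the paper's. However, there is a genuine slip in the first step as you have written it. You take a finite-dimensional $T$-stable, $Dist(B_\ev)$-stable submodule $N$ and a weight vector $v \in N$ of maximal weight, and then assert that maximality forces $X_\beta v = 0$ for all $\beta \in \Delta_1^+$. That inference does not follow: the odd root vectors $X_\beta$ are not in $Dist(B_\ev)$ and nothing you have assumed says they preserve $N$, so $X_\beta v$ may land outside $N$ and maximality within $N$ gives no control over it. You would only get $X_\alpha v = 0$ for $\alpha \in \Delta_0^+$ this way.

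The repair is exactly the paper's opening sentence, which you effectively arrive at only at the end of your argument and in the wrong place to help: a simple object of $(Dist(G),T)$-$\mathfrak{mod}$ is automatically finite-dimensional, because local finiteness writes $M$ as a sum of finite-dimensional $Dist(G)$-submodules and simplicity then forces $M$ to equal one of them. Once $M$ itself is finite-dimensional, a vector of maximal weight in all of $M$ (not in some $Dist(B_\ev)$-submodule) is annihilated by every positive root vector, even or odd, and the rest of your proof proceeds as written. Your $\mathfrak{sl}_2$-per-simple-even-root argument for $\lambda \in X^+(T)$ is the unpacked version of the ``classical result applied to $G_\ev$'' the paper cites, so there is no divergence in substance there; but the finite-dimensionality observation must come first, not last, or the existence of the odd-annihilated highest weight vector is unjustified.
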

\begin{proof}
Since $M$ is simple and locally finite, it is finite-dimensional.
By weight consideration, there exists a highest weight vector
$v_\la$ of weight $\la \in X(T)$ such that $X_\alpha^{(r)} v_\la
=0$ for all $\alpha \in \Delta^+, r \ge 1$, (and $r=1$ for
$\alpha$ odd). It follows that $M =L(\la)$. Now
$Dist(G_\ev).v_\la$ as a subspace of $L(\la)$ is finite
dimensional. A classical result applied to $G_\ev$ asserts that
$\la \in X^+(T)$.
\end{proof}

A major challenge here, which is a super phenomenon and does not
occur for $GL(m|n)$, is that $L(\la)$ for various $\la\in X^+(T)$
may fail to be finite-dimensional in general. By the equivalence
of categories $G$-$\mathfrak{mod}$ $\cong$
$(Dist(G),T)$-$\mathfrak{mod}$ in Theorem~\ref{th:equiv}, we will
no longer distinguish a $G$-module $M$ from a $(Dist(G),T)$-module
$M$. Define
\begin{eqnarray} \label{eq:dag}
X^\dag(T) =\{\la\in X^+(T) \mid L(\la) \text{ is
finite-dimensional}\}.
\end{eqnarray}
By Lemma~\ref{lem:simple}, the $L(\la)$'s, where $\la$ runs over
$X^\dag(T)$, form a complete list of pairwise non-isomorphic
simple $G$-modules. Thus, the classification of simple $G$-modules
boils down to the nontrivial problem of determining explicitly the
set $X^\dag(T)$, which will be solved completely in
Section~\ref{sec:classif}.
\subsection{The $G_r$-modules}

Given $\la \in X(T)$, define the baby Verma module (which is also
a $G_r$-module)
$$Z_r(\la) : = Dist(G_r) \otimes_{Dist(B_r)} k_\la.
$$
One can show as usual that $Z_r(\la)$ has a unique simple
$G_r$-quotient, which will be denoted by $L_r(\la)$. The following
is standard.

\begin{proposition}
Every simple $G_r$-module is isomorphic to $L_r(\la)$ for some
$\la \in X(T)$. Furthermore, $L_r(\la) \cong L_r(\mu)$ if and only
if $\la-\mu \in p^r X(T)$.
\end{proposition}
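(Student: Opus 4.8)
The plan is to mimic the standard argument for Frobenius kernels of algebraic groups (cf. \cite[II.3]{Jan}), using the basis of $\Dist(G_r)$ from Section~\ref{sec:chevalley} and the decomposition $G_r=(G_\ev)_r G_1$ from Lemma~\ref{lem:keygen}. First I would observe that, exactly as for the Verma modules $M(\la)$, the baby Verma module $Z_r(\la)$ is free as a module over the ``negative'' part of $\Dist(G_r)$ and in particular is $X(T)$-graded with one-dimensional $\la$-weight space spanned by the canonical generator $v_\la=1\otimes 1$; all other weights of $Z_r(\la)$ are of the form $\la-\sum c_\alpha \alpha$ with $c_\alpha\ge 0$ and bounded (because of the truncation $0\le n_\alpha<p^r$ and $\varepsilon_\beta\in\{0,1\}$ in the basis \eqref{eq:basis}). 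Hence $Z_r(\la)$ is finite-dimensional and has a largest submodule not containing $v_\la$ — namely the sum of all submodules lying in $\bigoplus_{\mu\ne\la}Z_r(\la)_\mu$ — so the simple quotient $L_r(\la)$ exists and is characterized as the unique simple $G_r$-module generated by a $B_r$-highest weight vector of weight $\la$.

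Next I would prove that every simple $G_r$-module $M$ arises this way. Since $\Dist(G_r)$ is finite-dimensional, $M$ is finite-dimensional; it is a $T$-module (being a $G_r$-module and $T$-module compatibly, or just because $\Dist(T)\subset\Dist(G_r)$ acts semisimply), so it has a weight $\la$ maximal with respect to the dominance order coming from $\Delta^+$. Then $X_\alpha^{(r')}$ (with $r'\ge 1$, and $r'=1$ for odd $\alpha$) kills the $\la$-weight space, so $M$ is a quotient of $Z_r(\la)$, hence $M\cong L_r(\la)$. This gives the first assertion.

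For the classification up to isomorphism, the ``if'' direction is the twist argument: if $\la-\mu\in p^rX(T)$, write $\mu=\la-p^r\nu$ and use that $p^r\nu$ is inflated from a character of $G_\ev/(G_\ev)_r$ via $F^r$, which restricts trivially to $G_r$; twisting $L_r(\la)$ by this one-dimensional $T$-module (trivial on $G_r$ but shifting the $T$-grading) produces a simple $G_r$-module with highest weight $\mu$, hence isomorphic to $L_r(\mu)$. The ``only if'' direction is where the main work lies, and this is the step I expect to be the chief obstacle. Following \cite{Jan}, the key point is that $\Dist(T_r)$ sits centrally enough that its action on a simple $\Dist(G_r)$-module factors through a single character of $T/T_r\cong T$ read modulo $p^r$; concretely one shows that the elements $\binom{H_s}{p^r}$ and the Frobenius-twisted torus act by a scalar depending only on $\la\bmod p^rX(T)$, and conversely that the $\Dist(T_r)$-eigenvalue on the highest weight line of $L_r(\la)$ determines $\la$ modulo $p^rX(T)$. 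The odd root vectors cause no trouble here since $X_\beta^2=0$ forces $\Dist(G_1)_{\bar 1}$ to act nilpotently and the torus argument is entirely even; in fact one can reduce to the purely even statement for $(G_\ev)_r$ via Lemma~\ref{lem:keygen}, because $L_r(\la)$ restricted to $(G_\ev)_r$ contains $L_r^{(G_\ev)_r}(\la)$ as a composition factor and two simple $G_r$-modules are isomorphic only if their $(G_\ev)_r$-socles share a highest weight modulo $p^rX(T)$. Thus the whole statement is obtained by combining Lemma~\ref{lem:keygen}, Lemma~\ref{lem:equivG1}, and the classical result for Frobenius kernels of the reductive group $G_\ev$.
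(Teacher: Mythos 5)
The paper itself gives no proof here — it simply declares the proposition ``standard'' (i.e.\ the argument in Jantzen II.3 for Frobenius kernels of reductive groups carries over). Your outline follows the same template, but it contains several inaccuracies that together leave the ``only if'' direction (which you yourself flag as the hard part) unproved.

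First, $\Dist(T)\not\subset\Dist(G_r)$: the superalgebra $\Dist(G_r)$ only contains $\Dist(T_r)$, i.e.\ the span of $\binom{H_s}{m_s}$ with $0\le m_s<p^r$. Consequently a simple $\Dist(G_r)$-module is not $X(T)$-graded but only $X(T_r)=X(T)/p^rX(T)$-graded, and there is no dominance order on $X(T_r)$ in which to pick a ``maximal'' weight. The correct way to produce a highest weight is to use that $\Dist(U_r^+)$ is a finite-dimensional local superalgebra with nilpotent augmentation ideal (the even root vectors are truncated, and $X_\beta^2=0$ for odd $\beta$), so $M^{U_r^+}\neq 0$ for any nonzero $G_r$-module $M$; a $T_r$-eigenvector there, lifted arbitrarily to $X(T)$, gives a surjection $Z_r(\la)\twoheadrightarrow M$.

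Second, the ``only if'' sketch does not work as written. The element $\binom{H_s}{p^r}$ does not lie in $\Dist(G_r)$, so it cannot be invoked there; $\Dist(T_r)$ is not central in $\Dist(G_r)$ (it conjugates the root vectors nontrivially), and it does not act by a single character on a simple module — it acts by a weight decomposition. And the proposed reduction to $(G_\ev)_r$ is unjustified: from $L_r(\la)\cong L_r(\mu)$ one only learns that $L_r(\la)|_{(G_\ev)_r}$ contains both $L_r^{(G_\ev)_r}(\la)$ and $L_r^{(G_\ev)_r}(\mu)$ among its (many) composition factors, which does not force $\la\equiv\mu$. The standard and correct route, which adapts verbatim to the super setting, is to use the coinduced module $Z'_r(\la):=\Hom_{\Dist(B_r^+)}(\Dist(G_r),k_\la)$: one shows $Z'_r(\la)^{U_r^+}$ is the one-dimensional $\la$-weight line and that $\operatorname{soc}_{G_r}Z'_r(\la)\cong L_r(\la)$, whence $L_r(\la)^{U_r^+}$ is one-dimensional of $T_r$-weight $\la$ and so determines $\la\bmod p^rX(T)$. (Equivalently one can use a contravariant duality fixing $\Dist(T_r)$ and swapping $X_\alpha\leftrightarrow X_{-\alpha}$.) The ``if'' direction, by contrast, is immediate and simpler than your twist argument: $k_\la\cong k_\mu$ as $B_r$-modules when $\la\equiv\mu\bmod p^rX(T)$, hence $Z_r(\la)\cong Z_r(\mu)$ and $L_r(\la)\cong L_r(\mu)$.
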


\subsection{A tensor product theorem}

\begin{proposition} \label{semisimple}
Let $r \ge 1$.
\begin{enumerate}
\item Every simple $G$-module regarded as a $G_r$-module is
completely reducible.
\item For $\la \in X^\dag (T)$ and $0 \neq v_\la \in L(\la)_\la$,
$Dist(G_r).v_\la$ is a $G_r$-submodule of $L(\la)$ and it is
isomorphic to $L_r(\la)$.
\end{enumerate}
\end{proposition}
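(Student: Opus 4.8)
The plan is to prove part (1) first, by a socle argument exploiting the normality of $G_r$ in $G$, and then to derive part (2) from part (1) together with the universal property of the baby Verma module $Z_r(\la)$. For part (1), I would start from the fact that, by Lemma~\ref{lem:simple} and \eqref{eq:dag}, every simple $G$-module is isomorphic to some finite-dimensional $L(\la)$ with $\la \in X^\dag(T)$. Since $G_1 \trianglelefteq G$ and $G_r = (G_\ev)_r G_1$ by Lemma~\ref{lem:keygen}, the Frobenius kernel $G_r$ is normal in $G$, and hence the $G_r$-socle $\Soc_{G_r} L(\la)$ is a $G$-submodule of $L(\la)$. Because $L(\la)$ is a nonzero finite-dimensional module over the finite-dimensional algebra $\Dist(G_r) \cong \mathcal U_r$, its $G_r$-socle is nonzero, so by simplicity of $L(\la)$ as a $G$-module it must be all of $L(\la)$; thus $L(\la)$ is completely reducible over $G_r$.

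For part (2), I would first observe that $\Dist(G_r).v_\la$ is a $\Dist(G_r)$-submodule of $L(\la)$, hence a $G_r$-submodule via the equivalence of $G_r$-modules with $\Dist(G_r)$-modules (Lemma~\ref{lem:equivG1}, which applies to the finite group scheme $G_r$). Next, $\la$ is the maximal weight of $L(\la) = \Dist(G).v_\la$ — all its weights lie in $\la - \Z_{\ge 0}\Delta^+$ by the triangular decomposition of $\Dist(G)$ coming from Theorem~\ref{KOSTANTZFORM} — so every positive root vector annihilates $v_\la$, since $X_\alpha^{(n)} v_\la \in L(\la)_{\la+n\alpha} = 0$ for $\alpha \in \Delta_0^+$, $n \ge 1$, and $X_\beta v_\la \in L(\la)_{\la+\beta} = 0$ for $\beta \in \Delta_1^+$. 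Reading off the basis \eqref{eq:basis} of $\Dist(B_r)$, this gives $\Dist(B_r).v_\la = k v_\la \cong k_\la$, whence the universal property of $Z_r(\la) = \Dist(G_r)\otimes_{\Dist(B_r)} k_\la$ furnishes a $\Dist(G_r)$-homomorphism $Z_r(\la) \twoheadrightarrow \Dist(G_r).v_\la$ sending $1 \otimes 1$ to $v_\la$. Thus $\Dist(G_r).v_\la$ is a nonzero quotient of $Z_r(\la)$ which, by part (1), is completely reducible as a $G_r$-module; having zero radical it is a quotient of $Z_r(\la)/\mathrm{rad}\, Z_r(\la) = L_r(\la)$, and being nonzero it must equal $L_r(\la)$.

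The main obstacle, and essentially the only nonformal ingredient, is the structural fact used in part (1): for the normal subgroup scheme $G_r \trianglelefteq G$, the $G_r$-socle of a rational $G$-module is a $G$-submodule. This rests on the normality $G_r \trianglelefteq G$ (Lemma~\ref{lem:keygen}) and on $\Dist(G_r)$ being finite-dimensional. In the purely even setting this is classical (cf. \cite{Jan}); I expect its comodule-theoretic proof to carry over to the super setting without change. Granting this, the remaining arguments are routine and parallel the treatments of $Q(n)$ and $GL(m|n)$ in \cite{BK2, BKu, Ku}.
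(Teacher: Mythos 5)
Your proof is correct, and part~(2) is essentially the paper's argument (Frobenius reciprocity produces a $G_r$-map $Z_r(\la)\to L(\la)$ with image $\Dist(G_r).v_\la$, which is a nonzero semisimple quotient of $Z_r(\la)$ by part~(1), hence equal to $L_r(\la)$).

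For part~(1), however, you take a genuinely different route. You invoke the Clifford-theoretic fact that for a normal subgroup scheme $N\trianglelefteq G$ the $N$-socle of any rational $G$-module is a $G$-submodule, and then conclude by simplicity of $L(\la)$ that $\Soc_{G_r}L(\la)=L(\la)$. The paper instead proceeds more directly: it picks a single simple $G_r$-submodule $M_1\subseteq M$ and forms $\sum_{g\in G_\ev(k)}gM_1$; normality of $G_r$ in $G$ makes each $gM_1$ a simple $G_r$-submodule (so the sum is completely reducible over $G_r$), the subspace is $G_\ev(k)$-stable and $G_\ev$ is reduced over $k=\bar k$ so it is a $G_\ev$-submodule, it is a $G_1$-submodule because $G_1\subseteq G_r$, and hence a $G$-submodule via $G=G_\ev G_1$ (Lemma~\ref{lem:keygen}); simplicity then forces equality with $M$. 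Your argument is conceptually cleaner but leans on a super analogue of the socle lemma that is not proved in the paper and would require a short comodule-theoretic verification; the paper's argument is more self-contained, using only $G_r\trianglelefteq G$, reducedness of $G_\ev$, and Lemma~\ref{lem:keygen}. One small inaccuracy in your write-up: normality of $G_r$ in $G$ does not follow from ``$G_1\trianglelefteq G$ and $G_r=(G_\ev)_rG_1$'' as you suggest; it holds simply because $G_r=\ker F^r$ is the kernel of a morphism of supergroups, which is what the paper implicitly uses.
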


\begin{proof}
Let $M$ be a simple $G$-module. Take any simple $G_r$-submodule
$M_1$ in $M$. Then, $\sum_{g \in G_\ev (k)} gM_1$ is a completely
reducible $G_r$-module; it is also a $G_\ev$-module and a
$G_1$-module, hence a $G$-module by Lemma~\ref{lem:keygen}. Thus,
$M=\sum_{g \in G_\ev (k)} gM_1$.

From the $B_r$-homomorphism $k_\la \rightarrow L(\la)$ and
Frobenius reciprocity, we obtain a $G_r$-homomorphism $Z_r(\la)
\rightarrow L(\la)$ with image $Dist(G_r).v_\la$. Now (2) follows
from that $Dist(G_r).v_\la$ is completely reducible by (1) and
that $Z_r(\la)$ has a simple $G_r$-quotient $L_r(\la)$.
\end{proof}

Denote $X_r(T) =\{\la \in X(T) \mid 0 \le \langle \la, \alpha^\vee
\rangle < p^r$ \text{ for all } $\alpha \in \Pi_\ev \}$.

\begin{proposition} \label{Frobsimple}
 For $\la \in X_r(T) \cap X^\dag(T)$, the restriction to
$G_r$ of the simple $G$-module $L(\la)$ remains to be simple, and
is isomorphic to $L_r(\la)$.
\end{proposition}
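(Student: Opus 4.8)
The plan is to show that the natural map $Z_r(\la) \to L(\la)$ of Proposition~\ref{semisimple}(2) not only has image $Dist(G_r).v_\la$ but in fact that $Dist(G_r).v_\la = L(\la)$; combined with the fact that $Dist(G_r).v_\la \cong L_r(\la)$ already established in Proposition~\ref{semisimple}(2), this yields both assertions at once. So the whole content is the equality $Dist(G_r).v_\la = L(\la)$, i.e.\ that the simple $G$-module $L(\la)$ is generated over $Dist(G_r)$ by a highest weight vector.

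First I would invoke the triangular-type decomposition of $Dist(G)$ coming from the Kostant basis \eqref{BASIS}: writing $Dist(G) = Dist(U^-) \cdot Dist(B)$ where $U^-$ is the opposite unipotent subgroup, every element of $L(\la) = Dist(G).v_\la$ is a $k$-span of vectors $\prod_{\alpha \in \Delta_0^+} X_{-\alpha}^{(n_\alpha)} \prod_{\beta \in \Delta_1^+} X_{-\beta}^{\varepsilon_\beta} v_\la$ (in a fixed order), since the $Dist(B)$-part acts on $v_\la$ by a scalar. The point is that $L(\la)$ is finite-dimensional — this is exactly the hypothesis $\la \in X^\dag(T)$ — so for each even positive root $\alpha$ the operator $X_{-\alpha}$ acts nilpotently, and hence $X_{-\alpha}^{(n_\alpha)} v = 0$ on any given weight vector once $n_\alpha$ is large enough. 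More precisely, $\la \in X_r(T)$ means $\langle \la, \alpha^\vee \rangle < p^r$ for all $\alpha \in \Pi_\ev$, and a standard $\mathfrak{sl}_2$-argument inside the reductive even subgroup $G_\ev$ (applied to $L(\la)$ regarded as a $G_\ev$-module, using Proposition~\ref{semisimple}(1) or just finite-dimensionality) shows the relevant weights $\langle \la, \alpha^\vee \rangle$ for all $\alpha \in \Delta_0^+$ are bounded by $p^r - 1$ as well. Thus every divided power $X_{-\alpha}^{(n_\alpha)}$ with $n_\alpha \ge p^r$ annihilates $v_\la$ and, inductively going down the product, annihilates the relevant partial products; so only exponents $n_\alpha < p^r$ contribute, and the odd exponents $\varepsilon_\beta \in \{0,1\}$ are automatically $< p^r$. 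Hence $L(\la)$ is spanned by vectors lying in $Dist(U^-_r).v_\la \subseteq Dist(G_r).v_\la$, giving $L(\la) = Dist(G_r).v_\la$.

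The main obstacle is the even-subgroup weight bound: one must be careful that $\la \in X_r(T)$, which only constrains the \emph{simple} coroots $\alpha^\vee$ for $\alpha \in \Pi_\ev$, still forces $\langle \mu, \alpha^\vee \rangle < p^r$ for all $\alpha \in \Delta_0^+$ and all weights $\mu$ of $L(\la)$ — or at least forces $X_{-\alpha}^{(n_\alpha)} v_\la = 0$ whenever $n_\alpha \ge p^r$, which is the only thing actually needed. This is the classical fact (cf.\ Jantzen \cite[II.3]{Jan}) that the simple $G_\ev$-module of highest weight $\la \in X_r(T)$ restricts irreducibly to $(G_\ev)_r$, applied here to the $G_\ev$-submodule $Dist(G_\ev).v_\la$ of $L(\la)$; once this is in place the argument closes. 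Finally, putting $L(\la) = Dist(G_r).v_\la \cong L_r(\la)$ and noting $L(\la)$ is in particular a simple $G_r$-module (being generated by a single vector on which the Kostant basis acts with bounded exponents and having $L_r(\la)$ as a quotient, hence equal to it), we conclude that $L(\la)|_{G_r}$ is simple and isomorphic to $L_r(\la)$.
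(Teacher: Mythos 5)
Your overall strategy — establish $Dist(G_r).v_\la = L(\la)$, so that Proposition~\ref{semisimple}(2) gives both assertions at once — coincides with the paper's, which phrases it as showing that $M := Dist(G_r).v_\la$ is a $Dist(G)$-submodule of $L(\la)$ and hence equals it by simplicity. The gap lies in how you justify the key equality.

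First, the assertion that $\la \in X_r(T)$ together with $\mathfrak{sl}_2$-theory forces $\langle \la, \alpha^\vee \rangle < p^r$ for every $\alpha \in \Delta_0^+$ is simply false: already in a rank-two subsystem one can have $\langle \la, \alpha_1^\vee\rangle = \langle \la, \alpha_2^\vee\rangle = p^r - 1$, whence $\langle \la, (\alpha_1 + \alpha_2)^\vee\rangle = 2(p^r-1) \geq p^r$. Second, even your weaker fallback "$X_{-\alpha}^{(n_\alpha)}v_\la = 0$ for $n_\alpha \geq p^r$" is not, as you claim, "the only thing actually needed": in the PBW product $\prod_\alpha X_{-\alpha}^{(n_\alpha)}\prod_\beta X_{-\beta}^{\varepsilon_\beta}v_\la$ the even divided powers act on $\prod_\beta X_{-\beta}^{\varepsilon_\beta}v_\la$ and on partial products, not on $v_\la$, and the phrase "inductively going down the product, annihilates the relevant partial products" is exactly the step that requires proof. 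Third, the classical fact you cite — that $L_\ev(\la)$ restricts irreducibly to $(G_\ev)_r$ for $\la\in X_r(T)$ — is a \emph{consequence} of the statement you actually need, namely that $Dist(G_\ev).v = Dist((G_\ev)_r).v$ for a highest weight vector $v$ of weight $\la\in X_r(T)$ in an arbitrary (not necessarily simple) highest weight $G_\ev$-module; it is not a substitute for it, and in your setting $Dist(G_\ev).v_\la$ need not be simple. That harder statement, and the super version needed here, is exactly Borel's inductive argument on root strings; the paper reduces Proposition~\ref{Frobsimple} to showing $M$ is stable under $X_\alpha^{(m)}$ for all $m$ and all negative even $\alpha$, and then cites this argument in the form \cite[Lemma~6.3]{Ku} (going back to \cite[Lemma~9.8]{BK2}). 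To repair your proof you would either have to reproduce that induction, or at minimum choose the PBW order with the odd negative root vectors on the \emph{left}, so that the problem reduces cleanly to the purely even containment $Dist(G_\ev).v_\la \subseteq Dist(G_r).v_\la$, and then invoke the even-group form of that lemma rather than the irreducibility statement.
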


\begin{proof}
By Proposition~\ref{semisimple}, the $G_r$-module
$M:=Dist(G_r).v_\la$ is isomorphic to $L_r(\la)$. It remains to
show that $M$ is a $G$-module, or to show that $M$ is a
$Dist(G)$-module. Since $Dist(G)$ is generated by $Dist(G_\ev)$
and $Dist(G_1)$, it suffices to show that $M$ is preserved by the
action of $Dist(G_\ev)$. Since $M$ is a $G_r$-module and $B_\ev$
normalizes $G_r$, $M$ is already preserved by the action of
$B_\ev$ or $Dist(B_\ev)$. So it remains to show that $M$ is
preserved by the action of $X_\alpha^{(m)}$ for every $m\ge 1$ and
negative even roots $\alpha$. This can be proved by the inductive
argument used for the supergroup $GL(m|n)$ in \cite[Lemma~6.3]{Ku}
(which goes back to Borel according to \cite[Lemma~9.8]{BK2} for
$Q(n)$). We omit the details.
\end{proof}

Let us denote by $L_\ev(\la)$ the simple $G_\ev$-module of highest
weight $\la \in X^+(T)$. The pullback $L_\ev(\la)^{[r]}
:=(F^r)^*L_\ev(\la)$ as a $G$-module is clearly simple and
isomorphic to $L(p^r\la)$. We have the following analogue of the
Steinberg tensor product theorem \cite{St1}, which will become
more precise with the determination of the set $X^\dag(T)$ in the
next section.

\begin{theorem} \label{Steinberg}
\begin{enumerate}
\item  For $r \ge 1$, $\la \in X_r(T) \cap X^\dag(T)$, and $\mu
\in X^+(T)$, we have
$$L(\la +p^r \mu) \cong L(\la) \otimes L_\ev(\mu)^{[r]}.$$
\item Let $\la =\sum_{i=0}^m p^i \la^{(i)}$ be such that
$\la^{(0)} \in X_1(T) \cap X^\dag(T)$ and $\la^{(i)}  \in X_1(T)$
for $1\le i \le m$. Then
$$L(\la) \cong L(\la^{(0)}) \otimes L_\ev(\la^{(1)})^{[1]}
\otimes \cdots \otimes L_\ev(\la^{(m)})^{[m]}.$$
\end{enumerate}
\end{theorem}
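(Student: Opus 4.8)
The plan is to prove part (1) first and then derive part (2) by iteration. For part (1), the strategy is the standard Steinberg-type argument, now available because of the equivalence of categories (Theorem~\ref{th:equiv}) and the Frobenius-kernel results just established. First I would fix $0 \neq v_\la \in L(\la)_\la$ and $0 \neq v_\mu \in L_\ev(\mu)_\mu$, and consider the vector $v := v_\la \otimes v_\mu \in L(\la) \otimes L_\ev(\mu)^{[r]}$, which has weight $\la + p^r\mu$. The key point is that $v$ is a highest weight vector: since $F^r$ kills $\Dist(G_r) \supseteq \Dist(G)^+ \cap \Dist(G_r)$, the odd root vectors $X_\beta$ and the divided powers $X_\alpha^{(s)}$ with $1 \le s < p^r$ act as $0$ on $L_\ev(\mu)^{[r]}$, while the higher divided powers $X_\alpha^{((p^r)^j \cdot \text{anything})}$ acting on the Frobenius twist reproduce the $G_\ev$-action of $X_\alpha^{(\cdot)}$ on $L_\ev(\mu)$; combining with $\la \in X^\dag(T)$ so that $L(\la)$ is finite-dimensional, one checks $X_\gamma^{(s)} v = 0$ for all $\gamma \in \Delta^+$ and all relevant $s$. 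Hence the $\Dist(G)$-submodule generated by $v$ has a simple quotient isomorphic to $L(\la + p^r\mu)$.

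Next I would show that this submodule is in fact all of $L(\la) \otimes L_\ev(\mu)^{[r]}$ and that the latter is simple. By Proposition~\ref{Frobsimple}, $L(\la)$ restricted to $G_r$ is simple, isomorphic to $L_r(\la)$; and $L_\ev(\mu)^{[r]}$ is a trivial $G_r$-module by construction of the Frobenius twist. Therefore, as a $G_r$-module, $L(\la) \otimes L_\ev(\mu)^{[r]} \cong L_r(\la)^{\oplus \dim L_\ev(\mu)}$, which is completely reducible with all composition factors $\cong L_r(\la)$. A $G$-submodule $N$ of $L(\la) \otimes L_\ev(\mu)^{[r]}$ is in particular a $G_r$-submodule, hence a direct sum of copies of $L_r(\la)$; one then uses the $G_\ev$-action (equivalently, that $L_\ev(\mu)$ is simple over $G_\ev$ and $G = G_\ev G_1$ by Lemma~\ref{lem:keygen}) to propagate a single copy of $L_r(\la)$ across the whole tensor product, concluding $N = L(\la) \otimes L_\ev(\mu)^{[r]}$. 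This gives simplicity, and since the submodule generated by $v$ is nonzero it must be everything; comparing with its simple quotient $L(\la+p^r\mu)$ forces $L(\la) \otimes L_\ev(\mu)^{[r]} \cong L(\la+p^r\mu)$. All of this is parallel to \cite{BK2, BKu, Ku}, and I would cite those for the routine verifications.

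For part (2), I would argue by induction on $m$, repeatedly applying part (1). Write $\la = \la^{(0)} + p\nu$ where $\nu = \sum_{i=1}^{m} p^{i-1}\la^{(i)}$. Here $\la^{(0)} \in X_1(T) \cap X^\dag(T)$, so part (1) with $r=1$ and $\mu = \nu$ (which lies in $X^+(T)$ since each $\la^{(i)} \in X_1(T) \subseteq X^+(T)$ and $X^+(T)$ is closed under addition and multiplication by positive integers) yields $L(\la) \cong L(\la^{(0)}) \otimes L_\ev(\nu)^{[1]}$. It then remains to identify $L_\ev(\nu)$: by the classical Steinberg tensor product theorem for the reductive group $G_\ev$ (Steinberg \cite{St1}), $L_\ev(\nu) \cong L_\ev(\la^{(1)}) \otimes L_\ev(\la^{(2)})^{[1]} \otimes \cdots \otimes L_\ev(\la^{(m)})^{[m-1]}$; applying the exact functor $(F^1)^*$ and using $(F^1)^* \circ (F^j)^* = (F^{j+1})^*$ on $G_\ev$-modules gives the desired factorization.

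\textbf{Main obstacle.} The delicate step is verifying that $v = v_\la \otimes v_\mu$ is genuinely a highest weight vector, i.e. computing the action of the divided-power and odd root generators of $\Dist(G)^+$ on a Frobenius twist and checking the Leibniz-type interaction across the tensor product --- this is where the super subtleties (the odd root vectors $X_\beta$ with $\varepsilon_\beta \in \{0,1\}$, and the exceptional case where twice an odd root is an even root, recorded in Theorem~\ref{KOSTANTZFORM}) could in principle cause trouble. However, since $F^r$ annihilates $\Dist(G_r)$ and all odd root vectors lie in $\Dist(G_1) \subseteq \Dist(G_r)$, the odd generators act trivially on $L_\ev(\mu)^{[r]}$, so the argument reduces cleanly to the even part, where it follows the $GL(m|n)$ template of \cite{Ku, BKu} verbatim. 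I would therefore present this verification briefly and refer to \emph{loc.\ cit.}\ for the remaining bookkeeping.
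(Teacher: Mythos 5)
Your proposal is correct and follows essentially the same route as the paper's (very terse) proof: both rest on Propositions~\ref{semisimple} and \ref{Frobsimple} plus the classical Steinberg theorem for $G_\ev$, with the tensor product analyzed via its restriction to $G_r$ in the standard Cline--Parshall--Scott / Jantzen pattern (and both defer the bookkeeping to \cite{BK2,BKu,Ku}). The only cosmetic difference is that you prove part (1) directly for general $r$, whereas the paper reduces both statements to the single case $r=1$ and then invokes classical Steinberg --- the underlying mechanism is identical.
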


\begin{proof}
Both statements follow easily with the help of the classical
Steinberg tensor product theorem for $G_\ev$, once we establish
the special case of (1) with $r=1$. This case can be proved using
Propositions~\ref{semisimple} and \ref{Frobsimple} in the same way
as for algebraic groups \cite{Jan} (cf. \cite{BK2, Ku}).
\end{proof}

\section{Classification of simple $SpO$-modules}
\label{sec:classif}
\subsection{The cases of $SpO(2n|1)$ and $SpO(2n|2)$}
Among all $spo(2n|\ell)$ with $n>0$ and $\ell>0$, $spo(2n|1)$ and
$spo(2n|2)$ distinguish themselves from the others in that they
are three-component $\Z$-graded Lie superalgebra: $\mathfrak g=
\mathfrak g_{-1} + \mathfrak g_0 +\mathfrak g_{+1}$, where
$\mathfrak g_0$ coincides with the even subalgebra $\mathfrak
g_{\bar{0}}$, and $\mathfrak g_{\pm 1}$ is generated by root
vectors associated to positive/negative odd roots.
\begin{proposition}
Let $n>0$ and $\ell=1$ or $2$. Then the $L(\la)$, where $\la$ runs
over $X^+(T)$, form a complete list of pairwise non-isomorphic
simple $\SpO(2n|\ell)$-modules.
\end{proposition}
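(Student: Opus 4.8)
The goal is to show that for $\ell = 1$ or $2$, every $L(\la)$ with $\la \in X^+(T)$ is finite-dimensional, i.e.\ $X^\dag(T) = X^+(T)$; combined with Lemma~\ref{lem:simple} this gives the stated classification. The key structural input is the three-component $\Z$-grading $\mathfrak g = \mathfrak g_{-1} + \mathfrak g_0 + \mathfrak g_{+1}$, where $\mathfrak g_0 = \mathfrak g_{\bar 0}$ is the even subalgebra and $\mathfrak g_{\pm 1}$ is spanned by the odd positive/negative root vectors. The plan is to exploit this grading exactly as one does in the construction of Kac modules: build $L(\la)$ as a quotient of an induced module that is \emph{finite} over $Dist(G_\ev)$, and then show finite-dimensionality via the $G_\ev$-theory.

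First I would form the generalized Verma (Kac-type) module $K(\la) := Dist(G) \otimes_{Dist(P)} L_\ev(\la)$, where $P = G_\ev \cdot \exp(\mathfrak g_{+1})$ is the parabolic-type subgroup with $Dist(P)$ generated by $Dist(G_\ev)$ and the odd positive root vectors $X_\beta$, $\beta \in \Delta_1^+$, and $L_\ev(\la)$ is the (finite-dimensional) simple $G_\ev$-module of highest weight $\la \in X^+(T)$, inflated to $Dist(P)$ via the trivial action of $\mathfrak g_{+1}$ (this is consistent precisely because $[\mathfrak g_{+1}, \mathfrak g_{+1}] \subseteq \mathfrak g_{+2} = 0$, which is where the three-term grading is essential). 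By the Kostant basis of Theorem~\ref{KOSTANTZFORM} and the PBW-type factorization, $K(\la)$ is a free module over $Dist(\mathfrak g_{-1}) = \Lambda(\mathfrak g_{-1})$ (the odd negative root vectors square to zero and anticommute modulo lower-degree corrections), so $K(\la) \cong \Lambda(\mathfrak g_{-1}) \otimes L_\ev(\la)$ as a $Dist(G_\ev)$-module up to filtration; in particular $\dim K(\la) = 2^{|\Delta_1^+|} \cdot \dim L_\ev(\la) < \infty$.

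Next, since $K(\la)$ is generated by its highest weight space (a copy of $L_\ev(\la)$ on which $\mathfrak g_{+1}$ acts by zero and whose $G_\ev$-highest weight vector has weight $\la$), it has a unique maximal submodule and hence a unique simple quotient, which must be $L(\la)$ by the universal property of the Verma module $M(\la)$ and uniqueness of its simple head. Therefore $L(\la)$ is a quotient of the finite-dimensional module $K(\la)$, so $\dim L(\la) \le \dim K(\la) < \infty$, giving $\la \in X^\dag(T)$. As $\la \in X^+(T)$ was arbitrary, $X^\dag(T) = X^+(T)$, and the classification then follows directly from Lemma~\ref{lem:simple} and the definition \eqref{eq:dag} of $X^\dag(T)$.

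The main obstacle, and the one point requiring genuine care, is verifying that $Dist(P)$ is a well-defined subalgebra with $\mathfrak g_{+1}$ acting trivially on the inflation of $L_\ev(\la)$ \emph{at the level of divided powers} — one must check that $X_\beta^{(0)} = 1$ and $X_\beta$ (for $\beta$ odd) together with the $X_\alpha^{(r)}$, $\binom{H_s}{m_s}$ generate a Hopf subalgebra closed under the relations, and that the quotient-to-$K(\la)$ map respects all of $Dist(G)$ rather than just $\mathcal U(\mathfrak g_\C)$. This is routine given Theorem~\ref{KOSTANTZFORM}(a) and the explicit commutation formulas there (including the subtle integrality when twice an odd root is a root, which does not arise here since $\ell \le 2$), but it is the step where the $\Z$-form structure, not just the complex Lie superalgebra, is used.
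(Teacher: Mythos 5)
Your proposal follows essentially the same Kac-module induction argument as the paper, and for $\ell=2$ it is correct. However, both your proof and the paper's have a genuine gap for $\ell=1$. The Lie superalgebra $\spo(2n|1)=osp(1|2n)$ (type $B(0,n)$) is a \emph{type II} classical Lie superalgebra: its odd part $\mathfrak g_{\bar 1}$ is the irreducible $2n$-dimensional standard $sp(2n)$-module, so it does \emph{not} admit a three-term $\Z$-grading with $\mathfrak g_0 = \mathfrak g_{\bar 0}$. Concretely, for odd positive roots $\delta_i, \delta_j$ (with $-n \le i \le j < 0$) the sum $\delta_i+\delta_j$ — in particular $2\delta_i$ — is an even positive root, and $[X_{\delta_i}, X_{\delta_j}]$ is a nonzero multiple of $X_{\delta_i+\delta_j}$. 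Thus $[\mathfrak g_{+1}, \mathfrak g_{+1}]\neq 0$, which directly falsifies your step ``$[\mathfrak g_{+1}, \mathfrak g_{+1}] \subseteq \mathfrak g_{+2}=0$.'' Your closing caveat that ``twice an odd root is a root \dots does not arise here since $\ell\le 2$'' is likewise wrong for $\ell=1$: the $\bullet$ node in the $B(0,n)$ Dynkin diagram exists precisely because $2\delta_{-1}$ is a root.

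The concrete failure is at the inflation step: for $\ell=1$, the trivial extension of $L_\ev(\la)$ to $\Dist(G_\ev)\cdot\Dist(G)_{+1}$ is not well-defined unless $\dim L_\ev(\la)=1$. Since $[X_{\delta_i}, X_{\delta_j}]=\pm X_{\delta_i+\delta_j}$ lies in the even subalgebra, forcing all $X_\beta$ ($\beta\in\Delta_1^+$) to act by $0$ would force the positive even root vector $X_{\delta_i+\delta_j}$ to annihilate \emph{all} of $L_\ev(\la)$ — not merely its highest weight vector — which is false for nontrivial $\la$. For $\ell=2$, $\spo(2n|2)=C(n)$ is type I (no sum of two positive odd roots is a root, and $2\delta_{-1}$ is not a root since $so(2)$ is abelian), so your argument and the paper's go through verbatim. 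For $\ell=1$ the proposition is still true — it is subsumed by the $m=0$ case of Theorem~\ref{th:classif}, where the constraint $j(\la^+)\le\la_{-1}$ is vacuous — but a different proof is needed (for instance the odd-reflection argument of Section~\ref{sec:classif}, or the classical boundedness-of-weights argument for $osp(1|2n)$).
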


\begin{proof}
The proof is the same as in the case of characteristic zero (cf.
\cite{Kac}). Let $G =\SpO(2n|\ell)$. By Lemma~\ref{lem:simple},
every simple $G$-module is of the form $L(\la)$ for some $\la \in
X^+(T)$. Because of the three-component $\Z$-grading on $\mathfrak
g =\spo(2n|\ell)$, one has a decomposition $Dist(G)
=Dist(G)_{-1}\cdot Dist(G_\ev)\cdot Dist(G)_{+1}$, where
$Dist(G)_{\pm 1}$ is generated by the odd positive/negative root
vectors and $Dist(G_\ev)\cdot Dist(G)_{+1}$ is a subalgebra. Then
for every $\la \in X^+(T)$, the irreducible $Dist(G_\ev)$-module
$L_\ev(\la)$ extends trivially to a $Dist(G_\ev)\cdot
Dist(G)_{+1}$-module. Hence, $K(\la) := Dist(G)
\otimes_{Dist(G_\ev)\cdot Dist(G)_{+1}} L_\ev(\la)$ is
finite-dimensional of highest weight $\la$, with $L(\la)$ as its
irreducible quotient.
\end{proof}

\subsection{Combinatorics related to Mullineux conjecture}

Let $\mu =(\mu_1, \mu_2, \ldots)$ be a partition. We will identify
it with its Young diagram and denote by $\ell(\mu)$ its length. The
{\em rim} of the Young diagram $\mu$ is the set of cells $(i,j)$
such that the cell $(i+1, j+1)$ is not in $\mu$. The {\em $p$-rim}
is a subset of the rim defined as follows in terms of $p$-segments.
The first $p$-segment consists of the first $p$ cells in the rim
from the left. The second $p$-segment starts with the first cell in
the rim strictly to the right of the previous segment, and so on.
The last $p$-segment is allowed to consist of possibly cells fewer
than $p$.
%The cardinality of the $p$-rim of $\mu$ is denoted by $a(\mu)$.

A cell of $\mu$ is {\em $p$-removable} if it is at the end of a
row of $\mu$ and it is in the $p$-rim but not at the end of any
$p$-segment. Denote by $J(\mu)$ the partition obtained from $\mu$
by deleting all $p$-removable cells of $\mu$. The number, denoted
by $j(\mu)$, of all $p$-removable cells in $\mu$ is then given by
$$ j(\mu) =|\mu| -|J(\mu)|.
$$
Let us denote $J^i(\mu) = J(J^{i-1}(\mu))$ for $i\ge 1$, with
$J^0(\mu) =\mu$.
%It is known (cf. e.g. \cite{BKu}) that
%%
%\[
% j(\la) =
%  \left \{
% \begin{array}{ll}
%  a(\la) -\la_1 & \text{ if } a(\la) \equiv 0  \mod p, \\
%  a(\la) -\la_1 +1 & \text{ if } a(\la) \not \equiv 0  \mod p.
%  \end{array}
% \right.
%\]
A partition $\mu =(\mu_1, \mu_2, \ldots)$ is {\em $p$-restricted}
(or simply {\em restricted}) if either $p=0$ or $p>0$ and $\mu_i
-\mu_{i+1} <p$ for all $i \ge 1$. Note that the notions $j$ and
$J$ make sense for arbitrary partitions, and for any partitions
$\mu$ and $\nu$,
\begin{eqnarray} \label{eq:j}
 j(\mu + p \nu) =j (\mu).
\end{eqnarray}
Let $\mathcal{RP} (d)$ be the set of restricted partition of $d$,
and let $\mathcal{RP} = \bigsqcup_{d \ge 0} \mathcal{RP} (d)$.

%%%%
%%%%
%For example, let $\mu =(5,4,3,3,1,1)$ and $p=5$. The $p$-rim of
%$\mu$ has two $p$-segments and it is consisted of the cells marked
%by $\ast$ or by $\circledast$, where the $p$-removable cells are
%those marked by $\circledast$. Hence $j(\mu) = 4$, and by deleting
%the cells marked by $\circledast$ from $\mu$, we obtain that
%$J(\mu) =(4,3,3,3)$.
%%
%$$\mu =
%\young(\,\,\,\ast\circledast,\,\,\,\circledast,\,\,\,,\ast\ast\ast,\circledast,\circledast)$$
%%%%
%%%%
%%%%

For example, let $\mu =(5,4,3,3,1,1)$ and $p=5$. The $p$-rim of
$\mu$ consists of two $p$-segments and it is consisted of the
cells colored in black (including both $\bullet$ and the
double-circled cells) as follows, where the $p$-removable cells
are the double-circled ones. Hence $j(\mu) = 4$,  and by deleting
the double-circled cells from $\mu$ we obtain that $J(\mu)
=(4,3,3,3)$.
%$$
%\setlength{\unitlength}{0.007500in}%
%\begin{picture}(108,124)(139,514)
%\multiput(143,618)(20,0){3}{\circle{5}}
%\multiput(143,598)(20,0){3}{\circle{5}}
%\multiput(143,578)(20,0){3}{\circle{5}}
%\multiput(203,618)(20,0){2}{\circle*{6}}
%\multiput(203,598)(20,0){1}{\circle*{6}}
%\multiput(143,558)(20,0){3}{\circle*{6}}
%\multiput(143,538)(20,0){1}{\circle*{6}}
%\multiput(143,518)(20,0){1}{\circle*{6}}
%\end{picture}
%$$
%$J(\mu)$ is obtained by deleting the double-circled cells:
$$
\setlength{\unitlength}{0.007500in}%
\begin{picture}(108,124)(139,514)
\multiput(143,618)(20,0){3}{\circle{5}}
\multiput(143,598)(20,0){3}{\circle{5}}
\multiput(143,578)(20,0){3}{\circle{5}}
\multiput(203,618)(20,0){2}{\circle*{6}}
\multiput(203,598)(20,0){1}{\circle*{6}}
\multiput(143,558)(20,0){3}{\circle*{6}}
\multiput(143,538)(20,0){1}{\circle*{6}}
\multiput(143,518)(20,0){1}{\circle*{6}}
\multiput(143,518)(20,0){1}{\circle{10}}
\multiput(203,598)(20,0){1}{\circle{10}}
\multiput(223,618)(20,0){1}{\circle{10}}
\multiput(143,538)(20,0){1}{\circle{10}}
\end{picture}
$$

\begin{theorem} \label{mull}
For $\mu \in \mathcal{RP}$, set $\la_i =j(J^{i-1}(\mu))$ for $i\ge
1$. Then, sending $\mu \in \mathcal{RP}$ to $\texttt{M} (\mu)
:=(\la_1, \la_2, \cdots)$ defines a bijection $\texttt M:
\mathcal{RP} \rightarrow \mathcal{RP}$ which satisfies $\texttt{M}
=\texttt{M}^{-1}$.
\end{theorem}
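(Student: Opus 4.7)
The plan is to break Theorem~\ref{mull} into four sub-claims and reduce the deepest one to the (now-proved) Mullineux conjecture. The sub-claims are: (a) the iteration $\mu, J(\mu), J^2(\mu),\ldots$ terminates at the empty partition; (b) the resulting sequence $(\la_1,\la_2,\ldots)$ is a partition with $\sum_i\la_i = |\mu|$; (c) this partition is $p$-restricted; (d) $\texttt{M}\circ\texttt{M} = \mathrm{id}$.

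For (a) I would observe that any nonempty partition $\mu$ has at least one $p$-removable cell — the top row always ends in the $p$-rim, and a short case check on where $p$-segments can close forces at least one end-of-row cell to avoid the segment-boundary restriction. Hence $j(\mu)\ge 1$ whenever $\mu\neq\emptyset$, so $|J(\mu)|<|\mu|$, the iteration terminates, and telescoping gives $|\mu|=\sum_i(|J^{i-1}(\mu)|-|J^i(\mu)|)=\sum_i\la_i$, which is the size part of (b). The remainder of (b) and all of (c) — the inequalities $\la_i\ge\la_{i+1}$ and $\la_i-\la_{i+1}<p$ — I would handle by a direct analysis of how the $p$-rim and its $p$-segment decomposition transform under one application of $J$. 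The point is that deleting the $p$-removable cells peels off a controlled portion of the rim and shifts the remaining rim one diagonal step inward, from which the two inequalities on successive values of $j$ follow by inspection. This is routine Young-diagram bookkeeping and presents no conceptual difficulty.

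The main obstacle is (d), the involution identity, which is precisely the content of the Mullineux conjecture. I would not attempt to reprove it from scratch. Instead, my plan is to identify the iteration $\mu\mapsto J(\mu)$ with an established formulation of the Mullineux algorithm — most conveniently the Bessenrodt--Olsson reformulation, in which the rows of the Mullineux symbol of $\mu$ are built successively from the $p$-rims of $\mu, J(\mu), J^2(\mu),\ldots$ — and then cite Ford--Kleshchev \cite{FK} (or Xu \cite{Xu}) for the fact that this map is an involution on $\mathcal{RP}$. The real technical content of this step is the dictionary: one must verify that the cells called $p$-removable here are exactly the cells deleted at each stage of the cited algorithm, which amounts to matching our definitions of \emph{$p$-rim} and \emph{$p$-segment} with the versions in the reference. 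Once this matching is in place, $\texttt{M}=\texttt{M}^{-1}$ is immediate from the cited result, and bijectivity of $\texttt{M}$ on $\mathcal{RP}$ follows formally.
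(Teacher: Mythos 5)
First, a point of comparison: the paper does not prove Theorem~\ref{mull} at all. It is stated as a known result, with the $J$-iteration formulation attributed to \cite{Xu} and its equivalence to the original Mullineux/Kleshchev algorithms taken from the literature (see the paragraph following the theorem). So your overall plan --- check well-definedness directly and import the hard identity after matching definitions with a standard form of the Mullineux algorithm --- is consistent with how the result is actually obtained, and the dictionary you describe is the right thing to set up.

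Two things in your write-up are, however, wrong or misleading. (i) Your termination argument fails as stated: it is not true that every nonempty partition has a $p$-removable cell. For $\mu=(p)$ the rim is a single complete $p$-segment whose unique end-of-row cell $(1,p)$ is the end of that segment, so $j((p))=0$ and $J((p))=(p)$ --- consistent with (\ref{eq:j}), since $(p)=\emptyset+p\cdot(1)$. The correct claim is that $j(\mu)\ge 1$ for nonempty \emph{restricted} $\mu$, and the cell to examine is the last cell $(\ell(\mu),\mu_{\ell(\mu)})$ of the \emph{bottom} row, not the top one: it lies in the first $p$-segment but cannot be that segment's final cell because $\mu_{\ell(\mu)}<p$. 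To iterate you must also check that $J$ preserves restrictedness. Neither point is difficult, but both use the hypothesis $\mu\in\mathcal{RP}$ in an essential way, which your sketch does not. (ii) The identity $\texttt{M}=\texttt{M}^{-1}$ is not ``precisely the content of the Mullineux conjecture.'' The conjecture is the representation-theoretic statement $D_\mu\otimes\mathrm{sgn}\cong D_{\texttt{M}(\mu)}$; the involutivity of the combinatorial map is elementary and was established by Mullineux himself \cite{Mu} (via the symmetry of the Mullineux symbol), and for the $J$-iteration form used here by \cite{Xu}. Deducing $\texttt{M}^2=\mathrm{id}$ from Ford--Kleshchev via $\mathrm{sgn}\otimes\mathrm{sgn}\cong k$ is logically sound once your dictionary is in place, but it routes an elementary combinatorial fact through the deepest available theorem; cite \cite{Mu} or \cite{Xu} for the combinatorics instead.
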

It is well known that the simple $kS_d$-modules $D_\mu$ are
parameterized by $\mu\in \mathcal{RP} (d)$, which are the
transposes of the $p$-regular partitions. The Mullineux conjecture
\cite{Mu} (now a theorem due to \cite{FK}) states that $D_\mu
\otimes \text{sgn} \cong D_{\texttt{M}(\mu)}$, where $\text{sgn}$
is the one-dimensional sign $kS_n$-module. The formulation in
Theorem~\ref{mull} due to \cite{Xu} of the {\em Mullineux
bijection} $\texttt M: \mathcal{RP} (d) \rightarrow \mathcal{RP}
(d)$ is equivalent to the earlier formulations by Mullineux,
Kleshchev and others (cf. \cite{BKu} for references and history).

\subsection{The classification}
Recall $T$ is the maximal torus of $\SpO(2n|\ell)$, with $\ell =2m$
or $2m+1$. A weight $\la \in X(T)$ can be expressed as $\la =\sum_{i
\in I(n|m)} \la_i \delta_i$.

Recall that the simple $G$-modules have been classified for $G
=\SpO(2n|1)$ and $\SpO(2n|2)$. (The case of $\SpO(2n|1)$ also fits
into the general statement below.)

\begin{theorem} \label{th:classif}
\begin{enumerate}
\item Let $\,n \ge 1$. A complete list of pairwise non-isomorphic
simple $SpO(2n|2m+1)$-modules is $\{L(\la)\}$, where $\la$ runs
over the set:
 \begin{eqnarray*}
 \left \{
 \Sigma_{i \in I(n|m)} \la_i \delta_i\mid \la_{-n} \ge \cdots \ge \la_{-1} \ge 0,
 \la_{1} \ge \cdots \ge \la_{m} \ge 0,
 \right. \\
 \left.
  \text{all } \la_i \in \Z, \;
 j(\la_{1}, \ldots, \la_{m}) \le \la_{-1} \right  \}.
 \end{eqnarray*}
 \item Let $m \ge 2$ and $n \ge 1$.
A complete list of pairwise non-isomorphic simple
$SpO(2n|2m)$-modules is $\{L(\la)\}$, where $\la$ runs over the
following set:
 \begin{eqnarray*}
 \left \{
 \Sigma_{i \in I(n|m)} \la_i \delta_i\mid \la_{-n} \ge \cdots \ge \la_{-1} \ge 0,
 \la_{1} \ge \cdots \ge \la_{m-1} \ge |\la_{m}| \ge 0,
 \right. \\
 \left.
  \text{all } \la_i \in \Z, \;
 j(\la_{1}, \ldots, |\la_{m}|) \le \la_{-1} \right \}.
 \end{eqnarray*}
 \end{enumerate}
\end{theorem}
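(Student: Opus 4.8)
The plan is to prove Theorem~\ref{th:classif} by the method of odd reflections, reducing the question of finite-dimensionality of $L(\la)$ to a combinatorial condition which will turn out to be the Mullineux-type inequality $j(\la_1,\ldots,\la_m)\le\la_{-1}$. First I would set up the odd reflection machinery: for each odd simple root $\gamma$ of a given Borel $B$, there is a new Borel $B^\gamma$ obtained by replacing $\gamma$ with $-\gamma$, and for a $G$-module $L$ with $B$-highest weight $\la$, its $B^\gamma$-highest weight is $\la$ if $\langle\la,\gamma\rangle=0$ (in the appropriate sense for an isotropic odd root) and $\la-\gamma$ otherwise. The key point, which must be checked using the Chevalley basis and the equivalence of categories from Theorem~\ref{th:equiv}, is that finite-dimensionality of $L$ is a property independent of the choice of Borel, so one may compute the highest weight of a fixed simple $G$-module with respect to any Borel in the $W$-orbit obtained by chains of odd reflections.

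Next I would carry out the reduction via Frobenius and the tensor product theorem. By Theorem~\ref{Steinberg}(2), write $\la=\sum_i p^i\la^{(i)}$ with $\la^{(0)}\in X_1(T)$; since $L_\ev(\la^{(i)})^{[i]}$ is always finite-dimensional, $L(\la)$ is finite-dimensional if and only if $L(\la^{(0)})$ is. Using \eqref{eq:j}, the condition $j(\la_1,\ldots,\la_m)\le\la_{-1}$ is also compatible with this reduction in the relevant variables, so it suffices to prove the theorem for $p$-restricted $\la$ (for those coordinates where restriction is the natural condition). This is where the hypothesis $\la\in\mathcal{RP}$ enters and why the Mullineux bijection $\texttt M$ of Theorem~\ref{mull} is the right combinatorial object: applying a maximal chain of odd reflections that moves the odd root $\delta_{-1}-\delta_1$ ``past'' the $so$-part transforms the $so(2m{+}1)$-block of the highest weight by exactly one step of the map $J$ (or rather, the accumulated effect is governed by $j$ and $J$), so that after passing to the opposite Borel the highest weight in the $\delta_{-1}$ slot becomes $\la_{-1}-j(\la_1,\ldots,\la_m)$, which must be $\ge0$ for the corresponding baby Verma/induced module over the relevant parabolic to have a finite-dimensional simple quotient.

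For the concrete argument I would proceed as follows. For one direction (necessity), assume $L(\la)$ is finite-dimensional; restrict to the even subgroup $G_\ev\cong Sp(2n)\times SO(\ell)$, observe that $L(\la)$ decomposes into finite-dimensional simple $G_\ev$-modules, and then push $\la$ through the chain of odd reflections linking $\delta_{-1}-\delta_1$ across the orthogonal nodes; at each isotropic step one tracks whether $\langle\la,\gamma\rangle$ vanishes, and the bookkeeping of non-vanishing steps is precisely the count $j$ of $p$-removable cells, forcing $j(\la_1,\ldots,\la_m)\le\la_{-1}$. For the other direction (sufficiency), given the inequality I would construct a finite-dimensional $G$-module with highest weight $\la$ by inducing from a suitable parabolic $P$ whose Levi is $GL$-type times $SpO$-type on a smaller rank: the $SpO(2n|1)$ and $SpO(2n|2)$ cases are already settled (the preceding proposition), and the $GL(n|m)$-block is handled by the Brundan–Kujawa classification of simple polynomial $GL(n|m)$-modules, which is parameterized by exactly the same set; gluing these via a dominance/finiteness argument for generalized Verma modules $K(\la)=Dist(G)\otimes_{Dist(P)}L_P(\la)$ yields the desired simple quotient.

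I expect the main obstacle to be the precise matching between one step of the odd-reflection procedure and the combinatorial map $J$: showing that a chain of odd reflections acting on the highest weight produces the partition $J(\la_1,\ldots,\la_m)$ with defect exactly $j$ requires an inductive analysis of when the isotropy condition $(\la,\gamma)=0$ holds at each node, and reconciling the $p$-dependence (which enters only through the definition of $p$-segments, hence through restrictedness) with the characteristic-free shape of the odd reflections. A secondary difficulty is the sufficiency direction in the $D(m,n)$ case with $m\ge2$, where the $|\la_m|$ absolute value reflects the disconnectedness of $O(2m)$ versus $SO(2m)$ and one must be careful that the induced module is genuinely a module for $SpO(2n|2m)$ and not merely for a finite cover. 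Once these combinatorial and parabolic-induction points are in place, the rest is a routine assembly using Theorem~\ref{th:equiv}, Lemma~\ref{lem:simple}, and Theorem~\ref{Steinberg}.
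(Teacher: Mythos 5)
Your necessity direction is in the right spirit: the odd-reflection chain moving $\delta_{-1}$ past the orthogonal nodes and the vanishing/non-vanishing bookkeeping is essentially what the paper does (Lemma~\ref{inequality}), although the paper first restricts $L(\la)$ to the rank-one subgroup $SpO(2|2m+1)\le SpO(2n|2m+1)$ before running the reflections, and the count of non-vanishing steps equals $j(\la^{\mathbf z +})$, not $j(\la^+)$ directly; one then uses $j(\la^+)=j(J(\la^{\mathbf z +}))\le j(\la^{\mathbf z +})$ to close the inequality. That is a fixable imprecision.

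The sufficiency direction has a genuine gap. You propose to produce a finite-dimensional $L(\la)$ by inducing from a parabolic $P$ whose Levi is of $GL(n|m)$ type and feeding in a simple polynomial $GL(n|m)$-module, via $K(\la)=Dist(G)\otimes_{Dist(P)}L_P(\la)$. But for $SpO(2n|2m+1)$ with $m\ge1$ the parabolic with Levi $\mathfrak{gl}(n|m)$ does not admit the three-term $\Z$-grading that made the induction argument work for $SpO(2n|1)$ and $SpO(2n|2)$: the nilradical contains even root vectors such as $X_{2\delta_i}$ and $X_{\delta_i+\delta_j}$ with $0<i<j$, so the opposite nilradical acts freely by PBW and $K(\la)$ is infinite-dimensional, with no a priori reason for its simple quotient to be finite-dimensional. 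The matching of parametrizing sets with Brundan--Kujawa's polynomial $GL(n|m)$-modules is a suggestive coincidence (the paper remarks on it), but it does not by itself transfer finite-dimensionality across the inclusion $GL(n|m)\le SpO(2n|2m+1)$. The paper's actual sufficiency argument runs in the opposite direction: embed $G=SpO(2n|2m+1)$ into a \emph{larger} $\mathcal G=SpO(2N|2M+1)$; show (Lemma~\ref{lem:weight}) that weights supported on the symplectic part lie in $X^\dag(T_{N,M})$ because they occur in exterior powers of the natural module; run a long chain of $M(N-n)$ odd reflections (Proposition~\ref{weight}) in $\mathcal G$ to convert such a highest weight into one whose tail is redistributed to the orthogonal slots by the Mullineux map $\texttt M$; and then \emph{restrict} the resulting simple $\mathcal G$-module $L^{\mathbf w}(\mu^{\mathbf w})$ to the subgroup $G$, whose standard positive system sits inside $\Delta^{\mathbf w +}$. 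That enlarge-then-restrict step, together with $\texttt M=\texttt M^{-1}$, is the missing idea; without it, the construction of a finite-dimensional module with the prescribed highest weight does not go through.
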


\begin{remark}
Theorem~\ref{th:classif} and its proof below using odd reflections
remain to be valid over an algebraically closed field of
characteristic $0$, where $j(\mu)$ (resp. $\texttt M (\mu)$) for a
partition $\mu$ is simply replaced by $\ell(\mu)$ (resp. the
conjugate partition $\mu^t$). In this case, a classification of
the finite-dimensional simple $spo(2n|\ell)$-modules using the
Dynkin labels appeared in \cite{Kac}, whose sketchy proof uses
totally different ideas and does not apply to the modular case. It
can be shown that the statement in {\em loc. cit.} is equivalent
to ours in characteristic zero.

The idea of using odd reflections for determining dominant weights
(in characteristic zero) goes back at least to \cite{LSS} (also
see \cite{PS} for a general setup).
\end{remark}

\begin{remark}
The simple {\em polynomial} $GL(n|m)$-modules (i.e. the
composition factors appearing in various tensor powers of the
natural $GL(n|m)$-module) were classified by Brundan-Kujawa
\cite{BKu} (this generalizes some earlier partial result of
Donkin). According to Theorem~\ref{th:classif}, the simple
$SpO(2n|2m+1)$-modules happen to admit the same parameterizing set
as the simple polynomial $GL(n|m)$-modules. This remarkable fact
suggests that the inclusion $GL(n|m) \leq SpO(2n|2m+1)$ could be
significant for further development.
\end{remark}

We will present a detailed proof below in the case of
$G=SpO(2n|2m+1)$. Denoting the explicit set of weights in part~(1)
of the above theorem by ${\mathcal X}^\dag(T)$ and recalling
(\ref{eq:dag}), we shall prove
$$X^\dag(T) ={\mathcal X}^\dag(T).$$

\subsection{The inclusion ${\mathcal X}^\dag(T) \subseteq X^\dag(T)$}

Let $\mathcal G =SpO(2N|2M+1)$ with $N \ge n$ and $M \ge m$.
%All systems of simple roots for $\mathcal G
%=SpO(2N|2M+1)$ are not conjugate under the Weyl group action.
We denote by $\delta_i$  with $i \in I(N|M)$ the standard weights
for $\mathcal G$, and by $T_{N,M}$ the maximal torus for $\mathcal
G$ to distinguish from $T$ for $G =SpO(2n|2m+1)$. Denote by
$S_{N+M}$ the symmetric group on the set $I(N|M)$ with subgroups
$S_N$ and $S_M$ whose notations are self-explained. According to
\cite{Kac}, up to the Weyl group equivalence, the systems of
simple roots $\Pi^\sigma$ are determined by the following set of
minimal length coset representatives in $S_{N+M} /S_N \times S_M$:
 $$D_{N,M} := \{\sigma \in S_{N+M} \mid \sigma^{-1} (-N) < \cdots < \sigma^{-1}(-1),
 \sigma^{-1} (1) < \cdots <\sigma^{-1} (M)\}. $$
 More precisely, if we denote $\sigma(\delta_i) =\delta_{\sigma(i)}$ and $\sigma(\delta_i
\pm \delta_j) =\delta_{\sigma(i)} \pm \delta_{\sigma(j)}$, etc, then
$ \Pi^\sigma := \{ \sigma (x) \mid x \in \Pi  \}.$
Denote by $\Delta^{\sigma+}$ the set of positive roots relative to
$\Pi^\sigma$. We shall denote by $L^\sigma(\la)$ the irreducible
highest weight $Dist(\mathcal G)$-module of highest weight $\la$
relative to $\Pi^\sigma$. In particular, $L^1(\la) =L(\la)$.

\begin{lemma}  \label{lem:weight}
A weight $\la$ of the form $\la =\sum_{i=-N}^{-1} \la_i \delta_i$,
where $(\la_{-N}, \ldots, \la_{-1})$ forms a partition, always
belongs to $X^\dag (T_{N,M})$.
\end{lemma}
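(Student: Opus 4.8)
The plan is to exhibit a finite-dimensional $Dist(\mathcal G)$-module of highest weight $\la$ (relative to the standard $\Pi$), since the existence of such a module forces its irreducible quotient $L(\la)$ to be finite-dimensional, hence $\la \in X^\dag(T_{N,M})$. The key observation is that a weight $\la$ supported entirely on the ``symplectic'' coordinates $\delta_{-N}, \ldots, \delta_{-1}$, with $(\la_{-N}, \ldots, \la_{-1})$ a partition, is in particular a dominant integral weight $\la \in X^+(T_{N,M})$ for the even subgroup $\mathcal G_\ev \cong Sp(2N) \times SO(2M+1)$ (the $SO(2M+1)$-part of $\la$ being zero). Thus the irreducible $\mathcal G_\ev$-module $L_\ev(\la)$ is finite-dimensional and well-defined.

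First I would note that, because $\la$ vanishes on all $\delta_j$ with $0 < j \le M$, every odd positive root vector $X_\beta$ with $\beta \in \Delta_1^+$ acts as zero on a highest weight vector in a highest weight module of weight $\la$: indeed, applying $X_\beta$ raises the weight out of the allowed cone, or more directly, one checks that the $Dist(\mathcal G_\ev)$-submodule generated by $v_\la$ together with the relations $X_\beta v_\la = 0$ is consistent. Concretely, I would form the induced module $K(\la) := Dist(\mathcal G) \otimes_{Dist(P)} L_\ev(\la)$, where $P$ is the parabolic-type subgroup with $Dist(P) = Dist(\mathcal G_\ev) \cdot Dist(\mathcal G)_{+1}$ (using that $\Delta_1^+$-root vectors, together with $Dist(\mathcal G_\ev)$, span a subalgebra, analogous to the three-component grading argument used in the $SpO(2n|1), SpO(2n|2)$ case) and $L_\ev(\la)$ is inflated to a $Dist(P)$-module with the odd positive part acting trivially. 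Then $K(\la)$ is finite-dimensional, being spanned by $\prod_{\beta \in \Delta_1^-} X_\beta^{\varepsilon_\beta} \otimes L_\ev(\la)$ with $\varepsilon_\beta \in \{0,1\}$ and $\dim L_\ev(\la) < \infty$; it has $\la$ as its highest weight, and $L(\la)$ is a quotient of $K(\la)$, so $\la \in X^\dag(T_{N,M})$.

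The step requiring the most care is the verification that the odd positive root vectors $X_\beta$, $\beta \in \Delta_1^+$, do act as $0$ on the highest weight space $L_\ev(\la)$ inside the putative module — i.e. that inflating $L_\ev(\la)$ along the projection $Dist(P) \to Dist(\mathcal G_\ev)$ is actually well-defined as a $Dist(P)$-module structure. Here I would use that, since $(\la_{-N}, \ldots, \la_{-1})$ is a partition and $\la$ has no support on the positive $\delta_j$'s, for any $\beta \in \Delta_1^+$ the element $\la + \beta$ fails to be dominant for the relevant even parabolic, so no weight vector of weight $\la + \beta$ can occur; combined with $X_\beta^2 = 0$ and the bracket relations of the Chevalley basis from Section~\ref{sec:chevalley}, this gives the required compatibility. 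The remaining points — that $Dist(\mathcal G_\ev) \cdot Dist(\mathcal G)_{+1}$ is a subalgebra, that the PBW-type basis of $Dist(\mathcal G)$ over $Dist(P)$ is finite modulo the $\mathcal G_\ev$-factor, and that $L(\la)$ is the unique simple quotient of $K(\la)$ — are routine, paralleling the argument already given for $SpO(2n|1)$ and $SpO(2n|2)$ and the standard highest weight machinery for $Dist(G)$.
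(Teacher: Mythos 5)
Your proposal attempts a parabolic induction: inflate $L_\ev(\la)$ to a module over $Dist(P)=Dist(\mathcal G_\ev)\cdot Dist(\mathcal G)_{+1}$ with the odd positive part acting by $0$, then induce up to $Dist(\mathcal G)$. The fatal problem is that for $\mathcal G=SpO(2N|2M+1)$ with $M\ge 1$ the space $Dist(\mathcal G_\ev)\cdot Dist(\mathcal G)_{+1}$ is \emph{not} a subalgebra, so the induced module $K(\la)$ is not even well-defined. Concretely, take $\beta=\delta_{-1}-\delta_1\in\Delta_1^+$ and the negative even root $-2\delta_{-1}$. Then $\beta+(-2\delta_{-1})=-\delta_{-1}-\delta_1$ is a \emph{negative} odd root, the $(-2\delta_{-1})$-string through $\beta$ is $\{\beta,\,\beta-2\delta_{-1}\}$, so $[X_{\beta},X_{-2\delta_{-1}}]=\pm X_{-\delta_{-1}-\delta_1}\neq 0$; thus $X_\beta X_{-2\delta_{-1}}$ is not expressible inside $Dist(\mathcal G_\ev)\cdot Dist(\mathcal G)_{+1}$. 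Equivalently, $\mathfrak g_{\bar 1}^+$ is not a $\mathfrak g_{\bar 0}$-submodule of $\mathfrak g_{\bar 1}$, and moreover the sum of two positive odd roots (e.g.\ $(\delta_{-1}+\delta_1)+(\delta_{-1}-\delta_1)=2\delta_{-1}$) can be an even root, so there is no three-component $\mathbb Z$-grading with $\mathfrak g_0=\mathfrak g_{\bar 0}$ once $M\ge 1$. This is exactly the point the paper flags at the beginning of Section~5: $SpO(2n|1)$ and $SpO(2n|2)$ ``distinguish themselves'' by admitting this grading, and that is why they are handled by a separate proposition. Your appeal to ``$\la+\beta$ fails to be dominant'' does not rescue this: the obstruction is to the subalgebra (hence to the existence of $K(\la)$), not to the annihilation of the highest weight vector, which holds trivially in any highest weight module.

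The paper's actual proof sidesteps all of this with a tensor-product construction. One observes that the $r$-th exterior power $\Lambda^r V$ of the natural $\mathcal G$-module $V=k^{2N|2M+1}$ is a finite-dimensional $\mathcal G$-module of highest weight $\Lambda_r=\sum_{i=-N}^{r-N-1}\delta_i$, so $\Lambda_r\in X^\dag(T_{N,M})$. Writing $(r_1,\ldots,r_s)$ for the conjugate of the partition $(\la_{-N},\ldots,\la_{-1})$ gives $\la=\Lambda_{r_1}+\cdots+\Lambda_{r_s}$, and the finite-dimensional tensor product $L(\Lambda_{r_1})\otimes\cdots\otimes L(\Lambda_{r_s})$ has $L(\la)$ as a quotient, whence $\la\in X^\dag(T_{N,M})$. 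You should replace the parabolic induction by this (or an equivalent) argument that produces an honest finite-dimensional module of the right highest weight; the induction route is not available here.
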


\begin{proof}
For $1 \le r \le N$, the $r$-th exterior power $\Lambda^r V$ of
the natural $\mathcal G$-module $V =k^{2N|2M+1}$ is of highest
weight $\Lambda_r := \sum_{i=-N}^{r-N-1} \delta_i$, and thus
$\Lambda_r \in X^\dag (T_{N,M}).$ Denote the conjugate partition
of $(\la_{-N}, \ldots, \la_{-1})$ by $(r_1, \ldots, r_s)$, with $N
\ge r_1 \ge \cdots \ge r_s \ge 0$. Clearly $\la =\Lambda_{r_1}
+\cdots +\Lambda_{r_s}$, and thus $L(\Lambda_{r_1}) \otimes \cdots
\otimes L(\Lambda_{r_s})$ contains $L(\la)$ as a quotient
$\mathcal G$-module.
\end{proof}

Each $\Pi^\sigma$ gives rise to a Borel subgroup $B^\sigma$ of
$\mathcal G$. The Verma module $M^\sigma(\la) := \Dist(\mathcal
G)\otimes_{\Dist(B^\sigma)} k_\la$ has a unique irreducible
quotient $\Dist(\mathcal G)$-module $L^\sigma(\la)$. The method of
{\em odd reflections} as presented in the next lemma is an
effective tool of relating different conjugacy classes of Borel
subgroups. It was exploited earlier by Serganova and many other
authors in characteristic zero and then formulated in
\cite[Lemma~4.2]{BKu} for $GL(m|n)$ in characteristic $p>2$. The
following argument is adapted from {\em loc. cit.} in terms of
roots and root vectors.

\begin{lemma}  \label{lem:oddref}
Let $\la \in X(T_{N,M})$, and let $\alpha =\pm \delta_i \pm
\delta_j$ be an odd root. Suppose that $\sigma, \sigma' \in
D_{N,M}$ are such that
%$\sigma =(i,j) \sigma'$ and
$\Delta^{\sigma' +} =\Delta^{\sigma+} \cup \{-\alpha\} \backslash
\{\alpha \}$. Then,
\[
L^\sigma (\la) \cong \left \{
 \begin{array}{ll}
  L^{\sigma'}(\la) & \text{ if } (\la, \alpha) \equiv 0  \mod p, \\
  L^{\sigma'}(\la -\alpha) & \text{ if } (\la, \alpha) \not \equiv 0  \mod
  p.
 \end{array}
 \right.
\]
\end{lemma}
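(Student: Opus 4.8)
The plan is to compare the two Verma modules $M^\sigma(\la)$ and $M^{\sigma'}(\la-\alpha)$ (or $M^{\sigma'}(\la)$) by tracking the single odd root vector $X_\alpha$ (where $\alpha$ is positive for $\sigma$ and negative for $\sigma'$). Concretely, let $v_\la^\sigma$ be the highest weight vector of $M^\sigma(\la)$ and $v^{\sigma'}$ the highest weight vector of the target Verma module relative to $\Pi^{\sigma'}$. Since $\Delta^{\sigma'+}$ differs from $\Delta^{\sigma+}$ only by swapping $\alpha$ for $-\alpha$, the two Borel subalgebras share all root vectors except these two, and $X_{-\alpha} * v_\la^\sigma$ is a candidate for a highest weight vector relative to $\Pi^{\sigma'}$. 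First I would verify that $X_{-\alpha} * v_\la^\sigma$ is annihilated by $X_\beta$ for every $\beta \in \Delta^{\sigma'+}$: for $\beta \neq -\alpha$ this uses $[X_\beta, X_{-\alpha}] \in \Dist(B^\sigma)$-type relations and the commutator formulas from the Chevalley basis (Section~\ref{sec:chevalley}), noting that $\beta - \alpha$ is either not a root or is positive for $\sigma$; for $\beta = -\alpha$ one computes $X_{-\alpha}*X_{-\alpha}*v_\la^\sigma$, and here the key point is that $2\alpha$ is \emph{not} a root when $\alpha = \pm\delta_i\pm\delta_j$ with both indices distinct, so $X_{-\alpha}^2 = \tfrac12[X_{-\alpha},X_{-\alpha}]$ lands in $\Dist(T_{N,M})$ acting on $v_\la^\sigma$ by a scalar proportional to $(\la,\alpha)$.

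The crucial computation is the weight $\delta$-value: $X_{-\alpha}*v_\la^\sigma$ has weight $\la - \alpha$, and one finds (using the odd root vector formulas and $H_\alpha := [X_\alpha, X_{-\alpha}]$) that $X_\alpha * X_{-\alpha} * v_\la^\sigma = c\,(\la,\alpha)\, v_\la^\sigma$ for a nonzero integer constant $c$ coming from normalizations in Remark~\ref{shortvector}. Thus if $(\la,\alpha) \equiv 0 \bmod p$, then $X_{-\alpha}*v_\la^\sigma$ is a genuine $\Pi^{\sigma'}$-highest weight vector of weight $\la$, generating a quotient of $M^{\sigma'}(\la)$; and since $\dim (M^\sigma(\la))_\mu = \dim(M^{\sigma'}(\la))_\mu$ for all $\mu$ by the PBW/Kostant basis (Theorem~\ref{KOSTANTZFORM}) applied to both Borel orderings, this quotient map is in fact an isomorphism $M^\sigma(\la)\cong M^{\sigma'}(\la)$, whence $L^\sigma(\la)\cong L^{\sigma'}(\la)$. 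If $(\la,\alpha)\not\equiv 0\bmod p$, then inside $M^{\sigma'}(\la-\alpha)$ the vector $X_\alpha * v_{\la-\alpha}^{\sigma'}$ is nonzero of weight $\la$ and is $\Pi^\sigma$-highest (same commutator check with the roles of $\alpha$ and $-\alpha$ reversed, now using $X_\alpha^2$ proportional to $(\la-\alpha,\alpha) = (\la,\alpha)-(\alpha,\alpha)$, which is again a nonzero scalar times something — here one needs $(\alpha,\alpha)=0$ for odd isotropic roots, so this equals $(\la,\alpha)\neq 0$), giving a surjection $M^\sigma(\la)\twoheadrightarrow$ (the submodule generated), and by comparing the two highest weight vectors one obtains $L^\sigma(\la)\cong L^{\sigma'}(\la-\alpha)$.

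I would organize the argument as: (i) record the relevant Chevalley commutator relations involving $X_{\pm\alpha}$ for $\alpha$ an odd root $\pm\delta_i\pm\delta_j$, emphasizing $X_\alpha^2 \in \Dist(T_{N,M})$ since $2\alpha\notin\Delta$; (ii) show $X_{-\alpha}*v_\la^\sigma$ (resp.\ $X_\alpha*v^{\sigma'}_{\la-\alpha}$) is highest weight for the other Borel, with the dichotomy governed by $(\la,\alpha)\bmod p$; (iii) invoke equality of weight-space dimensions of the two Verma modules to upgrade the resulting nonzero homomorphism to an isomorphism of Verma modules, hence of their simple quotients. The main obstacle I expect is step (ii) in the non-isotropic-looking case — keeping careful track of the scalar $(\la,\alpha)$ versus $(\la-\alpha,\alpha)$ and confirming that the normalization constants from Remark~\ref{shortvector} (the $\sqrt 2$ factors entering $X_{\pm\delta_i}$, relevant when $j = m$ and $\alpha$ involves the short root) do not introduce a factor of $p$ or vanish; this is precisely where the feature ``twice an odd root is an even root'' would matter, but since the odd roots here are of the form $\pm\delta_i\pm\delta_j$ with $i\neq j$ rather than $\pm\delta_i$, one checks $2\alpha\notin\Delta$ and the computation stays clean. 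The rest is bookkeeping with the explicit matrices (\ref{oddvec1}--\ref{oddvec3}).
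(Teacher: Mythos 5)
Your central computation --- that $X_{-\alpha}v$ is annihilated by every $X_\beta$ with $\beta\in\Delta^{\sigma+}\cap\Delta^{\sigma'+}$, that $X_{-\alpha}^2 v=0$ because $2\alpha\notin\Delta$, and that $X_\alpha X_{-\alpha}v=(\la,\alpha)v$ --- is exactly the heart of the paper's proof. But the logical scaffolding you build around it has several genuine errors, all stemming from working in Verma modules rather than, as the paper does, directly inside the simple module $L^\sigma(\la)$.

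First, the claim $\dim(M^\sigma(\la))_\mu=\dim(M^{\sigma'}(\la))_\mu$ for all $\mu$ is false. By the PBW basis, $\ch M^{\sigma'}(\la)=e^\alpha\cdot\ch M^\sigma(\la)$ (the odd factor $1+e^{-\alpha}$ is replaced by $1+e^\alpha$), so the two Verma modules never have matching weight spaces; what is true is $\ch M^\sigma(\la)=\ch M^{\sigma'}(\la-\alpha)$. In particular, in the case $(\la,\alpha)\equiv 0\bmod p$ there is \emph{no} isomorphism $M^\sigma(\la)\cong M^{\sigma'}(\la)$ --- the $\gl(1|1)$ example already shows their weight supports differ --- and the conclusion only holds at the level of simple quotients. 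Second, inside a Verma module $X_{-\alpha}v_\la^\sigma$ is \emph{always} nonzero (it is a PBW basis element), so the dichotomy governed by $(\la,\alpha)\bmod p$ simply does not manifest there; you also misstate its weight as $\la$ when it is $\la-\alpha$. Third, the remark that ``$X_{-\alpha}^2=\tfrac12[X_{-\alpha},X_{-\alpha}]$ lands in $\Dist(T_{N,M})$ acting by a scalar proportional to $(\la,\alpha)$'' is wrong: since $-2\alpha$ is neither a root nor zero, $[X_{-\alpha},X_{-\alpha}]=0$, so $X_{-\alpha}^2=0$ identically; the scalar $(\la,\alpha)$ arises only from $[X_\alpha,X_{-\alpha}]$.

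The paper sidesteps all of this by taking $v$ to be the $\sigma$-highest weight vector \emph{in the simple module} $L^\sigma(\la)$. There the alternative $X_{-\alpha}v=0$ or $X_{-\alpha}v\neq 0$ is genuine. In the first case $v$ itself is a $\sigma'$-highest weight vector of weight $\la$ inside a simple $\Dist(\mathcal G)$-module, forcing $L^\sigma(\la)\cong L^{\sigma'}(\la)$; in the second, $X_{-\alpha}v$ is a $\sigma'$-highest weight vector of weight $\la-\alpha$ inside the same simple module, forcing $L^\sigma(\la)\cong L^{\sigma'}(\la-\alpha)$. Simplicity then reduces the dichotomy to checking whether some $b\in\Dist(B^\sigma)$ sends $X_{-\alpha}v$ back to $v$, and by the commutation relations the only candidate is a multiple of $X_\alpha$, giving the criterion $(\la,\alpha)\not\equiv 0\bmod p$. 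You should restructure your argument this way: no isomorphism of Verma modules is needed or available, and the weight-space comparison has to be dropped entirely.
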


\begin{proof}
Let $v$ be a $\sigma$-highest weight vector in $L^\sigma (\la)$ of
weight $\la$. For any $\beta \in \Delta^{\sigma+} \cap
\Delta^{\sigma'+}$, we have
\begin{eqnarray}  \label{eq:zero}
X_\beta X_{-\alpha} v = [X_\beta, X_{-\alpha}] v=0
\end{eqnarray}
since either $\beta -\alpha$ is not a root or it belongs to
$\Delta^{\sigma+} \cap \Delta^{\sigma'+}$.

Since $2\alpha$ is not a root and $\alpha$ is odd, $X_{-\alpha}^2
v =0$.

If $X_{-\alpha} v =0$, then $v$ is a $\sigma'$-highest weight
vector of weight $\la $, and $L^\sigma (\la) \cong L^{\sigma'}
(\la)$.

If $X_{-\alpha} v \neq 0$, then $X_{-\alpha} v$ is a
$\sigma'$-highest weight vector of weight $\la -\alpha$, and
$L^\sigma (\la) \cong L^{\sigma'} (\la -\alpha)$.

Now, $X_{-\alpha} v \neq 0$ if and only if there exists $b \in
Dist (B^\sigma)$ such that $bX_{-\alpha} v =v$. By
(\ref{eq:zero}), we only need to consider $b$ to be a nonzero
scalar multiple of $X_\alpha$. Finally $X_\alpha X_{-\alpha} v =
(\la, \alpha)v.$ This completes the proof.
\end{proof}

Let ${\bf w} \in D_{N,M}$ be the distinguished permutation on
$I(N|M)$ such that the sequence $\{{\bf w}(i)\}_{i \in I(N|M)}$ is
the sequence which starts first from $-N$ to $n-N-1$ increasingly,
then from $1$ to $M$, and finally from $n-N$ to $-1$ increasingly.
Recall the Mullineux map $\texttt M$ from Proposition~\ref{mull}.

\begin{proposition}  \label{weight}
Let $N \ge n$ and $(\mu_{-N}, \mu_{1-N}, \ldots, \mu_{-1})$ be a
partition of length $\le N$ such that the `tail' $\mu^{>n}
:=(\mu_{n-N}, \mu_{n-N+1}, \ldots, \mu_{-1})$ is restricted. Let
$M \ge m$ be such that $M \ge \ell (\texttt{M} (\mu^{>n}))$. Set
$\mu =\sum_{i=-N}^{-1} \mu_i \delta_i$. Then we have an
isomorphism of $\mathcal G$-modules  $L(\mu) \cong L^{\bf
w}(\mu^{\bf w})$ with
$$\mu^{\bf w} := \sum_{i=-N}^{n-N-1} \mu_i \delta_i +
\sum_{i=1}^{M} \texttt{M}(\mu^{>n})_i \delta_{i}$$
where $\texttt{M}(\mu^{>n})_i$ is the $i$-th part of the partition
$\texttt{M}(\mu^{>n}) :=(\texttt{M}(\mu^{>n})_1,
\texttt{M}(\mu^{>n})_2, \cdots).$
\end{proposition}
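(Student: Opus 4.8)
The plan is to pass from the standard Borel $B = B^1$ to the Borel $B^{\bf w}$ by applying the odd-reflection Lemma~\ref{lem:oddref} repeatedly along a chain $1 = \sigma_0, \sigma_1, \ldots, \sigma_t = {\bf w}$ of elements of $D_{N,M}$ in which each consecutive pair differs by a single odd reflection, and to track how the highest weight changes at each step. First I would fix such a chain: since ${\bf w}$ interleaves the block $(1,\ldots,M)$ of positive-$\delta$ indices between $n-N-1$ and $n-N$ in the ${\bf w}$-ordering while keeping the relative orders inside the negative and positive blocks, one can realize ${\bf w}$ as a product of elementary transpositions each of which, at the level of positive systems, flips exactly one odd root $\delta_i - \delta_j$ (with $n-N \le i \le -1$, $1 \le j \le M$) to its negative while leaving all other roots in place — this is exactly the hypothesis of Lemma~\ref{lem:oddref}. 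I would order these so that we first move $\delta_1$ leftward past $\delta_{-1}, \delta_{-2}, \ldots, \delta_{n-N}$ in turn, then move $\delta_2$ leftward past the same block, and so on; this mirrors the combinatorial recipe computing the Mullineux image one $p$-segment at a time.

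Next I would compute the cumulative effect on weights. Starting from $\mu = \sum_{i=-N}^{-1}\mu_i\delta_i$ (with $\mu_j = 0$ for $j$ in the positive block, since $\mu$ has no $\delta_j$-component for $j > 0$), each odd reflection past $\alpha = \delta_i - \delta_j$ either fixes the weight (when $(\la,\alpha) \equiv 0 \bmod p$) or subtracts $\alpha$ from it (otherwise); note $(\la, \delta_i - \delta_j) = \la_i - \la_j$ by the bilinear form, keeping in mind the sign conventions $(\delta_i,\delta_i)=1$ for $i<0$ and $=-1$ for $i>0$, so $(\la,\delta_i-\delta_j) = \la_i + \la_j$ in the relevant range. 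The key claim is that after processing the entire leftward sweep of $\delta_j$ (for each fixed $j$ from $1$ to $M$), the accumulated weight on the positive block has acquired in its $j$-th slot exactly $\texttt{M}(\mu^{>n})_j$, the $j$-th part of the Mullineux image, while the entries $\mu_{-N}, \ldots, \mu_{n-N-1}$ of the "head" are untouched because the reflected roots never involve those indices. This reduces to a purely combinatorial identity: the number of entries in the current tail that the incoming $\delta_j$-reflection "picks up" (i.e. for which the pairing is $\not\equiv 0 \bmod p$) equals $j(J^{j-1}(\mu^{>n}))$, which by the very definition of the Mullineux bijection in Theorem~\ref{mull} is $\texttt{M}(\mu^{>n})_j$. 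The restrictedness of $\mu^{>n}$ is what guarantees the $p$-removable/$p$-segment bookkeeping is the relevant one, and the condition $M \ge \ell(\texttt{M}(\mu^{>n}))$ ensures we have enough positive slots to receive the whole image.

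The main obstacle, and the part requiring genuine care rather than routine checking, is this combinatorial matching between one step of odd-reflection sweeping and one application of the map $J$ (equivalently one part of $\texttt{M}$). Concretely, after $\delta_1, \ldots, \delta_{j-1}$ have been swept through, the current tail weight is $J^{j-1}(\mu^{>n})$ re-indexed, and I must verify that the set of indices $i$ with $n-N \le i \le -1$ for which the pairing $\la_i + (\text{current }\delta_j\text{-value})$ is $\not\equiv 0 \bmod p$ — these being precisely the $i$ at which $X_{-\alpha}$ acts nontrivially and hence shifts the weight — is exactly the set of rows of $J^{j-1}(\mu^{>n})$ from which a $p$-removable cell is deleted to form $J^j(\mu^{>n})$. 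This is a careful induction on $j$ tracking both the $p$-segment structure of the rim and the residues mod $p$; I expect it to occupy the bulk of the proof, with the odd-reflection machinery of Lemma~\ref{lem:oddref} and the enumeration of the chain being comparatively formal. Once the matching is established, the conclusion $L(\mu) = L^1(\mu) \cong L^{\bf w}(\mu^{\bf w})$ follows by composing the chain of isomorphisms from Lemma~\ref{lem:oddref}, and the final shape $\mu^{\bf w} = \sum_{i=-N}^{n-N-1}\mu_i\delta_i + \sum_{i=1}^{M}\texttt{M}(\mu^{>n})_i\delta_i$ is read off directly.
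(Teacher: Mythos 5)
Your proposal is correct and follows essentially the same route as the paper: the identical ordered sequence of $M(N-n)$ odd reflections (sweep $\delta_1$ through the tail, then $\delta_2$, and so on), with the identical bookkeeping of when the pairing is $\not\equiv 0 \pmod p$, reducing to the combinatorial matching between one sweep and one application of $J$. The only divergence is that the "careful induction" you flag as the bulk of the work is not reproved in the paper but cited directly to Lemma~6.2 of \cite{BKu}.
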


\begin{proof}
We will apply Lemma~\ref{lem:oddref} repeatedly with an ordered
sequence of odd reflections associated to the following $M(N-n)$
odd roots:
\begin{eqnarray}
\underbrace{\delta_{-1} -\delta_{1}, \delta_{-2} -\delta_{1},
\cdots, \delta_{n-N} -\delta_{1}};\;
\underbrace{\delta_{-1} -\delta_{2}, \delta_{-2} -\delta_{2},
\cdots, \delta_{n-N} -\delta_{2}};\;
\cdots;   \nonumber \\
\underbrace{\delta_{-1} -\delta_{M}, \delta_{-2} -\delta_{M},
\cdots, \delta_{n-N} -\delta_{M}}. \label{eq:oddroots}
\end{eqnarray}
In this way, $\Delta^+$ is replaced by $\Delta^{{\bf w}+}$. After
the first $N-n$ steps of odd reflections, the weight $\mu$ is
replaced by
$$\tilde{\mu} :=\sum_{i=-N}^{-1} \mu_i \delta_i -\sum_{i=n-N}^{-1}
x_i \delta_i + \left(\sum_{i=n-N}^{-1} x_i \right) \delta_{1}.
$$
Here the numbers $x_i \in \{0,1\}$ with $n-N \le i \le -1$ are
defined starting from $i=-1$ by the following formula:
\[
x_i = \left \{
 \begin{array}{ll}
  0 & \text{ if } \mu_i + \sum_{a= i+1}^{-1} x_{a} \equiv 0  \mod p, \\
  1 & \text{ if } \mu_i + \sum_{a= i+1}^{-1} x_{a} \not\equiv 0  \mod p.
 \end{array}
 \right.
\]
By \cite[Lemma~6.2]{BKu}, one has
$$\mu_i -x_i =J(\mu^{>n})_i, \text{ for } n-N \le i \le -1,
$$
and thus
$$
\tilde{\mu} =\sum_{i=-N}^{n-N-1} \mu_i \delta_i +\sum_{i=n-N}^{-1}
J(\mu^{>n})_i \delta_i + j(\mu^{>n}) \delta_{1}.$$
Here and below we write for $r \ge 1$ that
$ J^r(\mu^{>n}) =(J^r(\mu^{>n})_{n-N}, \cdots,
J^r(\mu^{>n})_{-1}).$

Repeating the above reasoning for the next $N-n$ odd reflections
among (\ref{eq:oddroots}), we replace $\tilde{\mu}$ by the weight
$$\sum_{i=-N}^{n-N-1} \mu_i \delta_i +\sum_{i=n-N}^{-1}
J^2(\mu^{>n})_i \delta_i + j(\mu^{>n}) \delta_{1} + j(J(\mu^{>n}))
\delta_{2}
$$
and so on. Recall the definition of the Mullineux map $\texttt M$
and note that $J^M (\mu^{>n}) =\emptyset$ thanks to $M \geq \ell
(\texttt{M} (\mu^{>n}))$. Finally, after all the $M(N-n)$ odd
reflections, we end up with the weight $\mu^{\bf w}$ in the
proposition.
\end{proof}

Now we are ready to prove that ${\mathcal X}^\dag(T) \subseteq
X^\dag(T)$ for $SpO(2n|2m +1)$. First, take $\nu =\sum_{i\in
I(n|m)} \nu_i \delta_i \in {\mathcal X}^\dag (T)$ with the
additional assumption that $\nu^{+} :=(\nu_1, \ldots, \nu_m)$ is
restricted. Denote by $\texttt{M} (\nu^{+}) :=(a_1, a_2, \ldots)$
which is also a restricted partition. It follows by the definition
of ${\mathcal X}^\dag (T)$ that $(\nu_{-n}, \ldots, \nu_{-1}, a_1,
a_2, \ldots)$ is a partition. Choose $M=m$ and $N \ge n
+\ell(\texttt{M} (\nu^{+}))$. By Lemma~\ref{lem:weight}, the
weight
$$\mu :=\sum_{i=-N}^{n-N-1} \nu_{i+N-n} \delta_i +
\sum_{i=n-N}^{-1} a_{i+N-n+1} \delta_i
$$
lies in $X^\dag (T_{N,M}).$ Note that $\bf w$ has been chosen such
that the standard set of positive roots for $SpO(2n|2m+1)$ is a
subset of positive roots relative to $\pi^{\bf w}$ for $\mathcal G
=SpO(2N|2M+1)$. This inclusion of roots is compatible with the
non-standard inclusion $I(n|m) \hookrightarrow I(N|M)$ given by
$(0>)i \mapsto i+n-N, (0<)j \mapsto j$, and it gives rises to an
embedding of supergroups $SpO(2n|2m+1) \leq \mathcal G$. Applying
Proposition~\ref{weight} and $\texttt{M}^{-1} =\texttt{M}$, we
have $\mu^{\bf w}|_T =\nu$. We conclude that $\nu \in X^\dag(T)$
by restricting the $\mathcal G$-module $L^{\bf w} (\mu^{\bf w})$
to the subgroup $SpO(2n|2m+1)$.

Write an arbitrary $\la \in {\mathcal X}^\dag (T)$ (uniquely) as
$\la =\nu + p \sum_{i=1}^{m} \tau_i \delta_{i}$ for some partition
$\tau =(\tau_1, \ldots, \tau_m)$ and $\nu \in {\mathcal X}^\dag
(T)$ with $\nu^{+}$ being restricted. Since $j(\nu^+) =j(\la^+)$
by (\ref{eq:j}), we have $\nu \in {\mathcal X}^\dag (T)$. By the
tensor product Theorem~\ref{Steinberg},
$$L(\la) \cong L(\nu) \otimes L_\ev(\sum_{i=1}^{m} \tau_i
\delta_{i})^{[1]}, $$
whence $\la \in X^\dag(T)$ and ${\mathcal X}^\dag(T) \subseteq
X^\dag(T)$.

\subsection{The inverse inclusion  $X^\dag(T) \subseteq {\mathcal X}^\dag(T)$}

Part of the necessary conditions for $\la \in X^\dag(T)$ is easy
to determine.
\begin{lemma} \label{easypart}
Assume $\la =\sum_{i \in I(n|m)} \la_{i} \delta_i \in X^\dag(T)$.
Then, $(\la_{-n}, \ldots, \la_{-1})$ and $\la^{+} :=(\la_{1},
\ldots, \la_{m})$ are partitions.
\end{lemma}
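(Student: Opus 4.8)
The plan is to obtain both partition conditions at one stroke from the fact that, for $\la\in X^\dag(T)$, the module $L(\la)$ is finite-dimensional, together with classical highest-weight theory for the reductive group $G_\ev=Sp(2n)\times SO(2m+1)$. In fact, by the definition (\ref{eq:dag}) of $X^\dag(T)$ and by Lemma~\ref{lem:simple} one already has $X^\dag(T)\subseteq X^+(T)$, so I would simply unwind the explicit description of $X^+(T)$ recorded above for $\ell=2m+1$: the inequalities $\la_{-n}\ge\cdots\ge\la_{-1}\ge 0$ and $\la_1\ge\cdots\ge\la_m\ge 0$ are precisely the assertion that $(\la_{-n},\ldots,\la_{-1})$ and $\la^+=(\la_1,\ldots,\la_m)$ are partitions.

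For a self-contained argument not invoking that identification, I would argue directly with the highest weight vector $v_\la\in L(\la)_\la$. By construction $v_\la$ is annihilated by $X_\alpha^{(r)}$ for every $\alpha\in\Delta^+$ and $r\ge 1$, in particular for every even simple root $\alpha\in\Pi_\ev$. For such an $\alpha$ the divided powers $X_{\pm\alpha}^{(r)}$ together with the $\binom{H_\alpha}{t}$ generate a subalgebra of $Dist(G)$ isomorphic to $Dist(SL_2)$, and the cyclic submodule $Dist(SL_2)\,v_\la$ of $L(\la)$ is a finite-dimensional highest-weight $SL_2$-module of highest weight $\langle\la,\alpha^\vee\rangle$; finite-dimensionality forces $\langle\la,\alpha^\vee\rangle\in\Z_{\ge 0}$. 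Letting $\alpha$ run over the simple roots of $sp(2n)$ yields $\la_{-n}\ge\cdots\ge\la_{-1}\ge 0$, and letting it run over the simple roots of $so(2m+1)$ yields $\la_1\ge\cdots\ge\la_m\ge 0$, as required. (For $\ell=2m$ the same reasoning produces $\la_1\ge\cdots\ge\la_{m-1}\ge|\la_m|\ge 0$ instead, matching the description of $X^+(T)$ in that case.)

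I do not anticipate any real difficulty in this lemma: it records only the ``easy half'' of the necessary conditions for $\la\in X^\dag(T)$, namely those already visible on the even subgroup $G_\ev$. The genuinely subtle necessary condition, $j(\la^+)\le\la_{-1}$, is of a wholly different nature --- it is invisible at the level of $G_\ev$ --- and is where the main work lies; it will require the odd-reflection machinery, the passage to a larger supergroup $\mathcal G=\SpO(2N|2M+1)$, and the Mullineux bijection, and is taken up in the remaining steps of this subsection.
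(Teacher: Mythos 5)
Your proposal is correct, and its second paragraph is essentially the paper's argument. The paper's proof restricts $L(\la)$ (which is finite-dimensional since $\la\in X^\dag(T)$) to the even subgroups $Sp(2n)$ and $SO(2m+1)$, noting that $\Delta^+$ contains $\Delta^+(Sp(2n))$ and $\Delta^+(SO(2m+1))$, and then invokes the classical dominance criterion for finite-dimensional highest-weight modules of reductive groups (citing \cite{Jan}); your $SL_2$-by-$SL_2$ reduction over $\Pi_\ev$ is exactly the mechanism underlying that classical criterion, so this is the same argument at a finer level of granularity. Your first observation — that the lemma already follows from $X^\dag(T)\subseteq X^+(T)$ together with the explicit description of $X^+(T)$ given earlier in the section — is also valid, and is arguably a more economical way to phrase what the paper is doing: the paper's proof can be read as re-deriving that explicit description of $X^+(T)$ rather than citing it. Your closing remarks about the remaining, genuinely hard inequality $j(\la^+)\le\la_{-1}$ correctly locate where the real work of the subsection lies.
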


\begin{proof}
Since $\Delta^+$ contains $\Delta^+(Sp(2n))$, the fact that $\la$
belongs to $X^\dag(T)$ implies that $(\la_{-n}, \ldots, \la_1)$ is
a partition by the classical result (cf. e.g. \cite{Jan}).
Similarly, Since $\Delta^+$ contains $\Delta^+(SO(2m+1))$, $\la^+$
is also a partition.
\end{proof}

To prove $X^\dag(T) \subseteq {\mathcal X}^\dag(T)$, it remains to
show that $j(\la^+) \le \la_{-1}$ for $\la \in X^\dag(T)$. To that
end, we first consider the special case of $SpO(2|2m+1)$ (i.e.
$n=1$) whose maximal torus will be denoted by $T_{1,m}$. Recall
the standard set of simple roots for $SpO(2|2m+1)$ is $\Pi =
\{\delta_{-1} -\delta_1, \delta_1 -\delta_2, \ldots, \delta_{m-1}
-\delta_{m} , \delta_{m}\}$, with $\delta_{-1} -\delta_1$ being
odd. Let ${\bf z} \in S_{1+m}$ be the permutation defined by ${\bf
z}(-1) =1, {\bf z}(1) =2, \cdots, {\bf z}(m-1) =m, {\bf z}(m)
=-1$. Then
$$\Pi^{\bf z} =\{\delta_{1} -\delta_2, \ldots, \delta_{m-1}
-\delta_{m}, \delta_{m} -\delta_{-1}, \delta_{-1}\},$$
with $\delta_{m} -\delta_{-1}$ and $\delta_{-1}$ being odd.

Via an ordered sequence of odd reflections associated to the $m$
odd roots
$$\delta_{-1} -\delta_{1}, \delta_{-1} -\delta_{2}, \ldots, \delta_{-1} -\delta_{m},
$$
the highest weight $\la$ relative to the standard Borel is
replaced by some $\la^{\bf z} =\sum_{i \in I(1|m)} \la^{\bf z}_{i}
\delta_i$, with all $\la^{\bf z}_i \in \Z$, which is a highest
weight relative to the Borel $B^{\bf z}$. That is, $L(\la) \cong
L^{\bf z}(\la^{\bf z})$. Note that $\la^{{\bf z}+} :=(\la^{\bf
z}_1, \ldots, \la^{\bf z}_{m})$ is a partition by considering the
restriction of $L^{\bf z}(\la^{\bf z})$ to the subgroup $SO(2m+1)$
since the standard Borel of $SO(2m+1)$ is a subgroup of the Borel
$B^{\bf z}$ of $\SpO(2|2m+1)$.

\begin{lemma}  \label{inequality}
Assume $\la =\sum_{i \in I(1|m)} \la_{i} \delta_i \in
X^\dag(T_{1,m})$. Retain the notations as above. Then, we have
\begin{align}
\la^{\bf z}_{-1} &\in \Z_+ \nonumber \\
 J(\la^{{\bf z}+}) &= \la^{+} \nonumber \\
 j (\la^{{\bf z}+})&=\la_{-1} -\la^{\bf z}_{-1} \nonumber \\
 j(\la^+) &\le \la_{-1}.  \label{eq3}
\end{align}
\end{lemma}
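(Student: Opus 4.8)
The plan is to trace the $m$ odd reflections explicitly, read $\la^{\bf z}$ off from $\la$, and then match the resulting bookkeeping with the combinatorics of the operation $J$.

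First I would apply Lemma~\ref{lem:oddref} one reflection at a time: at the $j$-th step the odd root is $\delta_{-1}-\delta_j$, and since twice this root is not a root, the current highest weight is either left fixed or has $\delta_{-1}-\delta_j$ subtracted from it, according to whether its pairing with $\delta_{-1}-\delta_j$ vanishes modulo $p$. Because the $\delta_j$-coefficient is untouched before the $j$-th step, this produces, for $1\le j\le m$, numbers $y_j\in\{0,1\}$ with $\la^{\bf z}_j=\la_j+y_j$ and $\la^{\bf z}_{-1}=\la_{-1}-\sum_{j=1}^m y_j$, where each $y_j$ is determined by a linear congruence in $\la_j$, $\la_{-1}$ and the partial sums of the earlier $y_a$. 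In particular $\la^{{\bf z}+}=(\la_1+y_1,\dots,\la_m+y_m)$ contains $\la^+$ cellwise, and $\sum_j y_j=\la_{-1}-\la^{\bf z}_{-1}$.

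For $\la^{\bf z}_{-1}\in\Z_+$ I would use that odd reflections leave the even root system and its positive system unchanged, so the root $2\delta_{-1}$ of the $sp(2)$-factor is still positive for $B^{\bf z}$. Hence a $B^{\bf z}$-highest weight vector of $L^{\bf z}(\la^{\bf z})\cong L(\la)$ is a highest weight vector of weight $\la^{\bf z}_{-1}$ for this $sl_2$, and it generates a finite-dimensional $sl_2$-submodule of $L(\la)$ (which is finite-dimensional since $\la\in X^\dag(T_{1,m})$); therefore $\la^{\bf z}_{-1}\ge 0$.

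The step I expect to be the main obstacle is $J(\la^{{\bf z}+})=\la^+$: one must verify that the cells by which $\la^{{\bf z}+}$ exceeds $\la^+$, i.e.\ the end cells of the rows $j$ with $y_j=1$, are precisely the $p$-removable cells of $\la^{{\bf z}+}$ — that they lie in its $p$-rim, are not the terminal cells of full $p$-segments, and that no further row-end cell is $p$-removable. This is exactly the combinatorial identity of Brundan--Kujawa already invoked in the proof of Proposition~\ref{weight} (cf.\ \cite[Lemma~6.2]{BKu}): the recursion defining the $y_j$ is the $p$-segment recursion defining $J$; some care is needed since $\la^{{\bf z}+}$ need not be $p$-restricted, although $j$ and $J$ still make sense. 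Granting this, $j(\la^{{\bf z}+})=|\la^{{\bf z}+}|-|J(\la^{{\bf z}+})|=|\la^{{\bf z}+}|-|\la^+|=\la_{-1}-\la^{\bf z}_{-1}$, which is the third assertion. Combining it with $\la^{\bf z}_{-1}\ge 0$ gives $j(\la^{{\bf z}+})\le\la_{-1}$; and since $j$ does not increase along the $J$-orbit of a partition (the sequence $j(\la^{{\bf z}+}),\,j(J(\la^{{\bf z}+})),\,j(J^2(\la^{{\bf z}+})),\dots$ is weakly decreasing, cf.\ Theorem~\ref{mull}), we conclude $j(\la^+)=j(J(\la^{{\bf z}+}))\le j(\la^{{\bf z}+})\le\la_{-1}$, as desired.
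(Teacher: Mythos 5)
Your proof matches the paper's own argument essentially line for line: $\la^{\bf z}_{-1}\ge 0$ from the persistence of the even positive root $2\delta_{-1}$ under odd reflections, $J(\la^{{\bf z}+})=\la^+$ from the Brundan--Kujawa identification of the odd-reflection recursion with the map $J$ (cf.\ \cite[Lemma~6.2]{BKu} as in Proposition~\ref{weight}), and then the chain $j(\la^+)=j(J(\la^{{\bf z}+}))\le j(\la^{{\bf z}+})=\la_{-1}-\la^{\bf z}_{-1}\le\la_{-1}$. The only cosmetic difference is that you run the $m$ odd reflections forward from $\la$ to $\la^{\bf z}$, whereas the paper runs them in reverse (from $\la^{\bf z}$ back to $\la$, via $\delta_m-\delta_{-1},\dots,\delta_1-\delta_{-1}$) so that the recursion directly reads off as the definition of $J$; your caution that $\la^{{\bf z}+}$ need not be $p$-restricted when applying $j\circ J\le j$ is a subtlety the paper leaves implicit as well.
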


\begin{proof}
We have $\la^{\bf z}_{-1} \in \Z_+$ since
$2 \delta_{-1}$ is a positive root in $\Pi^{\bf z}$.

Via an ordered sequence of odd reflections associated to the $m$
odd roots
$$\delta_{m} -\delta_{-1}, \ldots, \delta_{1} -\delta_{-1},
$$
the weight $\la^{\bf z}$ relative to the Borel $B^{\bf z}$ is
replaced by the weight $\la$. The identity $J(\la^{{\bf z}+}) =
\la^{+}$ now follows by an argument which is completely parallel
to the one for Proposition~\ref{weight} (also compare the proof of
\cite[Lemma~6.2]{BKu}).

Next, we have
\begin{eqnarray*}
j (\la^{{\bf z}+}) &=&|\la^{{\bf z}+}| -|J(\la^{{\bf z}+})|  \\
 &=& |\la^{{\bf z}+}| -|\la^{+}| = \la_{-1}
-\la^{\bf z}_{-1}
\end{eqnarray*}
where the last equation is a byproduct of the above procedure of
odd reflections.

Finally, (\ref{eq3}) follows from a direct computation:
\begin{eqnarray*}
j(\la^+) &=&j(J(\la^{{\bf z}+})) \\ &\le& j (\la^{{\bf z}+})
= \la_{-1} -\la^{\bf z}_{-1} \le \la_{-1}.
\end{eqnarray*}
\end{proof}

The supergroup $SpO(2|2m+1)$ can be regarded naturally as a
subgroup of $SpO(2n|2m+1)$ in a way compatible with the natural
inclusion $I(1|m) \subseteq I(n|m)$, and thus the corresponding
fundamental systems of the two supergroups are compatible. Hence,
$\la =\sum_{i \in I(n|m)} \la_i \delta_i \in X^\dag(T)$ implies
that $\sum_{i \in I(1|m)} \la_i \delta_i \in X^\dag(T_{1,m})$, by
restricting the $SpO(2n|2m+1)$-module $L(\la)$ to the subgroup
$SpO(2|2m+1)$. It follows by (\ref{eq3}) that $j(\la^+) \le
\la_{-1}$ for $\la \in X^\dag(T)$. This together with
Lemma~\ref{easypart} imply that $X^\dag(T) \subseteq {\mathcal
X}^\dag(T)$ in the general case of $SpO(2n|2m+1)$. The proof of
Theorem~\ref{th:classif} for $SpO(2n|2m+1)$ is now completed.

\begin{remark}
Theorem~\ref{th:classif} for the classification of simple
$SpO(2n|2m)$-modules can be established using the same ideas in
two steps as above and so we will skip the details. The only
difference here is that the odd reflections associated to the odd
roots $\delta_i +\delta_j$ will also be used. The appearance of
the absolute value $|\la_{n+m}|$ comes from the weight condition
of $SO(2m)$ (compare Lemma~\ref{easypart} and its proof).
\end{remark}

\end{document}